\documentclass[11pt,usenames,dvipsnames]{amsart}
\usepackage{a4wide}
\usepackage{amssymb}
\usepackage{amsmath}
\usepackage{amsthm}
\usepackage{mathrsfs}
\usepackage{yfonts}
\usepackage{euscript}
\usepackage{tikz-cd}
\usepackage{upgreek}
\usepackage[all]{xypic}
\usepackage{xcolor} %{\color{Maroon/ForestGreen}  }
\usepackage{comment}
\usepackage{xr-hyper}

\usepackage{enotez, translations}
\usepackage[unicode,psdextra,dvipdfmx]{hyperref} %dvipdfmx deleted by Otmar
\usepackage{bookmark}
\usepackage[hyperpageref ]{backref}
\usepackage[only,llbracket,rrbracket]{stmaryrd}
\usepackage{todonotes}

\newcommand{\com}[1]{{#1}}                % Kommentarebefehl
\renewcommand{\com}[1]{\relax}    % Kommentare ausblenden
\newcommand{\Footnote}[1]{\footnote{{\color{Sepia} #1}}}  % Fussnoten nur f\"{u}r Peter und Otmar farbig in Sepia
\renewcommand{\Footnote}[2][]{\relax}  % nur Fussnoten f\"{u}r Peter und Otmar ausblenden
\theoremstyle{plain}

\newcommand{\id}{\operatorname{id}}

\newcommand{\Hom}{\operatorname{Hom}}

\newcommand{\Fil}{\operatorname{Fil}}

\newcommand{\Rep}{\operatorname{Rep}}

\newcommand{\Gal}{\operatorname{Gal}}

 \newcommand{\CC}{\mathbb{C}}

 \newcommand{\QQ}{\mathbb{Q}}
 
 \newcommand{\ZZ}{\mathbb{Z}}

 \newcommand{\cO}{\mathcal{O}}

\renewcommand{\cR}{\mathcal{R}}

 \newcommand{\Qp}{\QQ_p}

 \newcommand{\Cp}{\CC_p}

 \newcommand{\be}{\begin{equation}}
\newcommand{\ee}{\end{equation}}

\newcommand{\fpbar}{\ifmmode {\overline{\mathbb{F}_p}}\else$\mathbb{F}_p$\ \fi}

\newcommand{\fp}{\ifmmode {\mathbb{F}_p}\else$\mathbb{F}_p$\ \fi}
\newcommand{\zp}{\ifmmode \mathbb{Z}_p\else$\mathbb{Z}_p$\ \fi}
\newcommand{\zpur}{\ifmmode \widehat{\zp^{ur}}\else $\widehat{\zp^{ur}}$\ \fi}

 \newcommand{\TLT}{T_{\pi}}

   \newcommand{\cyc}{\mathrm{cyc}}

 \newcommand{\cris}{\mathrm{cris}}
 \newcommand{\dR}{\mathrm{dR}}
 \newcommand{\Iw}{\mathrm{Iw}}
 
  \newcommand{\LT}{\mathrm{LT}}
  \newcommand{\Tate}{\mathrm{Tate}}

\renewcommand{\H}{\mathrm{H}}

\newcommand{\La}{\ifmmode\Lambda\else$\Lambda$\fi}

\newcommand{\q}{\ifmmode {\mathbb Q}\else${\mathbb Q}$\ \fi}
\newcommand{\qp}{\ifmmode {\mathbb Q}_p\else${\mathbb Q}_p$\ \fi}

\newcommand{\Q}{\ifmmode {\mathbb Q}\else${\mathbb Q}$\ \fi}
\newcommand{\ql}{\ifmmode {{\mathbb Q}_l}\else${\mathbb Q}_l$\ \fi}

\newtheorem{theorem}{Theorem}[section]

\newtheorem{lemma}[theorem]{Lemma}

\newtheorem{proposition}[theorem]{Proposition}

\newtheorem{example}[theorem]{Example}

\newtheorem{assumption}[theorem]{Assumption}

\theoremstyle{definition}

\theoremstyle{remark}
\newtheorem{remark}[theorem]{Remark}

\newtheorem*{acknowledgement}{Acknowledgements}

  1

\newcommand{\GG}{\mathbb{G}}
	\newcommand{\kL}{k_{L}}
\newcommand{\HIW}{H_{\mathrm{Iw}}}

\newcommand{\piL}{\pi_{L}}

\newcommand{\Fin}{\mathfrak{F}_{\infty}}
\newcommand{\Lin}{L_{\infty}}
\newcommand{\GaL}{\Gamma_L}
\newcommand{\hten}{\widehat{\otimes}}
\newcommand{\OF}{o_F}
\newcommand{\MLcris}{\mathcal{D}_{\mathrm{cris},L}}

\title{On the interpolation property of Kupferer's Lubin-Tate regulator maps}

\author{Takamichi Sano and Otmar Venjakob}

\address{Osaka Metropolitan University,
Department of Mathematics,
3-3-138 Sugimoto\\Sumiyoshi-ku\\Osaka\\558-8585,
Japan}
\email{tsano@omu.ac.jp, tkmc310@gmail.com}

\address{Universit\"{a}t Heidelberg,  Mathematisches Institut,  Im Neuenheimer Feld 288,  69120
Heidelberg,  Germany,}
\urladdr{http://www.mathi.uni-heidelberg.de/~venjakob/}
\email{venjakob@mathi.uni-heidelberg.de}
\makeatletter
 
  \@addtoreset{equation}{subsection}
\makeatother

\begin{document}

\begin{abstract}
Generalizing both \cite{LZ} and \cite{SV24,SanoV} we  prove the  interpolation property at Artin characters of Kupferer's regulator map \cite{Ku} over a fixed local field $L$ over $\mathbb{Q}_p$ for $p$ any prime. Since our methods cover the case $p=2,$ we in particular close a gap in the literature even for the (cyclotomic) 2-variable regulator map from \cite{LZ}. Moreover, we study big (dual) Bloch-Kato exponential maps for unramified towers and we use them to improve Colmez' reciprocity law in the Lubin-Tate setting from \cite{SanoV}. Finally, we compare Kupferer's regulator map with the 2-variable regulator map of Loeffler-Zerbes when $L=\mathbb{Q}_p$.
\end{abstract}

\maketitle
\tableofcontents

\section{Introduction}

%Notation:
%\begin{itemize}
%\item $L$: a finite extension of $\QQ_p$,
%\item $\pi_L \in o_L$: a prime element,
%\item $({\rm LT}, +_{\rm LT})$: a Lubin-Tate formal group over $o_L$ for $\pi_L$,
%\item $g_{\rm LT}(Z):= \left( \frac{\partial (X+_{\rm LT} Y)}{\partial X} |_{(X,Y)=(0,Z)}\right)^{-1} \in o_L[[Z]]^\times$,
%\item $\log_{\rm LT}(Z) \in L[[Z]]$: the formal logarithm (which satisfies $\frac{d}{dZ}\log_{\rm LT}(Z) = g_{\rm LT}(Z)$),
%\item $\partial_{\rm inv}:= g_{\rm LT}(Z)^{-1}\frac{d}{dZ}$,
%\item ${\rm LT}_n:= {\rm LT}[\pi_L^n]$: the $\pi_L^n$-torsion points,
%\item $L_n:=L({\rm LT}_n)$,
%\item $L_\infty:=\bigcup_n L_n$,
%\item $T:=\varprojlim_n {\rm LT}_n$: the Tate module of ${\rm LT}$,
%\item $V:=L\otimes_{o_L} T$,
%\item $\cN: o_L((Z)) \to o_L((Z))$: Coleman's norm operator,
%\item $\eta =(\eta_n)_n \in \varprojlim_n {\rm LT}_n = T$: an $o_L$-basis,
%\item $g_u=g_{u,\eta} \in (o_L((Z))^\times)^{\cN=1}$: the Coleman power series for $u \in \varprojlim_n L_n^\times$,
%\item $H^i_{\rm Iw}(L_\infty/L,-):=\varprojlim_n H^i(L_n,-)$: the Iwasawa cohomology.
%\end{itemize}

\subsection{Background}

In \cite{LZ} Loeffler and Zerbes constructed a 2-variable regulator map for crystalline Galois representations building on a 1-variable regulator map attached to the cyclotomic extension of the base field and they showed its interpolation property with regard to Dirichlet characters of $\mathrm{Gal}(K_\infty/\Qp)$ in Theorem 4.15 of (loc.\ cit.), where $K_\infty$ is the composite of $\Qp(\mu_{p^\infty})$ and any unramified $p$-adic Lie extension of $\Qp$.

Let $L$ be a finite extension of $\Qp$, $o_L$ its ring of integers and $\pi_L$ a prime of $o_L$ with residue field $\kL.$
We consider a Lubin-Tate formal group ${\rm LT} = {\rm LT}_{\pi_L}$ over $o_L$ corresponding to the prime element $\pi_L$.
In \cite{SV24} a Lubin-Tate regulator map has been constructed for a certain class of representations with regard to the corresponding Lubin-Tate extension $L_\infty$. Its interpolation property at Artin characters has been established in Corollary 2.6.5 of \cite{SanoV}; for the convenience of the reader we recall it as Theorem \ref{thm:adjointformula} in \S  \ref{sec:regSV}.

Now let $F/L$ be a finite unramified extension,
	$\Fin$ an unramified $p$-adic Lie extension of $F$ and set $\Upsilon:=\Gal(\Fin/F)$.
In \cite{Ku} Kupferer constructed  a regulator map for extensions of the type  $o_L^\times \times\zp\times \Delta$ or $o_L^\times \times \Delta $ over $L,$ where $\Delta$ denotes a finite cyclic group. To state it we denote the $\Qp$-analytic distributions
	of $\Upsilon$ with values in $K$ by $D_{\Qp}(\Upsilon,K)$ and the $L$-analytic
	distributions of $\GaL:=\Gal(L_\infty/L)$ with vales in $K$ by $D_{L}(\GaL,K)$, where $K$ is any complete subfield of $\Cp$ containing the period $\Omega$ of $\LT$ (see \S \ref{sec:notation}) and $\mathfrak{F}_\infty$.

Consider the composite $X_\infty:=\mathfrak{F}_\infty L_\infty$ which is a $p$-adic Lie extension of $F$ with Galois group $G:=\mathrm{Gal}(X_\infty/F)\cong \Gamma_L\times \Upsilon$. We set $\Lambda_{o_L}(G):=o_L[[G]]$. We define the continuous character
\begin{equation*}\label{def tau}
\tau := \chi_{\cyc}\cdot\chi_{\LT}^{-1},
\end{equation*}
where $\chi_{\cyc}$ is the cyclotomic character while $\chi_\LT$ is attached to the Lubin-Tate formal group $\LT$.

Let $T$ be a free $o_L$-modules of finite rank with a continuous linear $G_F$-action such that
% \in \Rep_{o_L,f}^{\cris,\mathrm{an}}(G_F)$ and
$V(\tau^{-1}):=L\otimes_{o_L}T(\tau^{-1})$ is crystalline and analytic with nonnegative Hodge-Tate weights.
If $\mathfrak{F}_\infty/F$ is a finite extension, we assume that $T(\tau^{-1})$ has no quotient isomorphic to the trivial representation. Let $H^1_{\rm Iw}(X_\infty/F,T)$ denote the Iwasawa cohomology for $T$ with respect to the extension $X_\infty/F$. We set $H^1_{\rm Iw}(X_\infty/F,V):=L\otimes_{o_L}H^1_{\rm Iw}(X_\infty/F,T)$.
%\footnote{By the same reasoning as in \cite[Thm.\ 4.4, Prop.\ 4.5]{LZ} the last condition is not required, if $\mathfrak{F}_\infty/F$ is infinite.}.

\begin{theorem}[{Kupferer \cite{Ku}}]\label{thm:Kupferer}
There is a regulator map\footnote{Kupferer takes coefficients in $\Cp,$ but a closer inspection shows immediately that we can take smaller coefficients   here as in \cite{SV24}, see \S \ref{sec:regKu}.}
	\[ \begin{xy} \xymatrix{
	 \mathbf{L}^G_{V}\colon \HIW^1(X_\infty/F,T) \ar[r] & D_L(\GaL,K) \hten_{\widehat{\mathfrak{F}_\infty}} D_{\Qp}(\Upsilon,\widehat{\mathfrak{F}_\infty})
	 \otimes_F  D_{\cris,F}(V(\tau^{-1})),
	 } \end{xy} \]
which is a homomorphism of $\Lambda_{o_L}(G)$-modules.
	This map interpolates the corresponding regulator maps for all finite intermediate fields
	of $\Fin/F$.
	\end{theorem}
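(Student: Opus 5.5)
This is a construction theorem. The plan is to build $\mathbf{L}^G_V$ from the Lubin-Tate regulator of \cite{SV24} over each finite layer of $\Fin/F$, and then pass to the inverse limit.

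\emph{Step 1 (finite layers).} For each finite subextension $F\subseteq F'\subseteq \Fin$, the extension $F'/L$ is unramified. The construction of \cite{SV24}, carried out over the base $F'$, gives a $\Lambda_{o_L}(\GaL)$-linear regulator
\[\mathbf{L}_{V,F'}\colon H^1_{\Iw}(F'L_\infty/F',T)\to D_L(\GaL,K)\otimes_{F'} D_{\cris,F'}(V(\tau^{-1})).\]
It is built in three stages:
\begin{itemize}
\item identify Iwasawa cohomology with $\psi$-invariants of the Lubin-Tate $(\varphi_L,\Gamma_L)$-module of $T(\tau^{-1})$, using the Kisin-Ren/Fontaine equivalence;
\item use analyticity and crystallinity with nonnegative Hodge-Tate weights to embed these $\psi=1$ elements into $\mathcal{R}^+\otimes D_{\cris}$;
\item apply $1-\frac{\pi_L}{q}\varphi_L$ and then the Fourier (Schneider-Teitelbaum) isomorphism, after base change to $K\ni\Omega$, to land in $D_L(\GaL,K)$.
\end{itemize}

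\emph{Step 2 (Galois structure).} Write $D_{\cris,F'}(V(\tau^{-1}))=F'\otimes_F D_{\cris,F}(V(\tau^{-1}))$, which holds because $F'/F$ is unramified. Twisting by the $\Gal(F'/F)$-action gives a $\Lambda_{o_L}(\GaL\times\Gal(F'/F))$-linear map into
\[D_L(\GaL,K)\otimes_{F}F'[\Gal(F'/F)]\otimes_F D_{\cris,F}(V(\tau^{-1})),\]
using normal bases $F'\cong F[\Gal(F'/F)]$ in the same way as Loeffler-Zerbes. The key check here is compatibility under corestriction $F''\supseteq F'$. On the source this is the norm. On the target it should be the trace on $F''$ together with the projection of group rings. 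The check uses the commutation of $\psi$ and $\varphi$ with the Frobenius on $D_{\cris}$ and the fact that the Lubin-Tate tower is linearly disjoint from the unramified tower.

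\emph{Step 3 (limit).} Pass to $\varprojlim_{F'}$. The main obstacle is growth control: the limit of the finite-level targets must lie in the completed tensor product $D_L(\GaL,K)\hten D_{\Qp}(\Upsilon,\widehat{\Fin})$, rather than merely in a projective limit of finite group rings. I would prove a uniform bound as in \cite[\S3--4]{LZ}. Since $\Upsilon$ acts on $\widehat{\Fin}$ through Frobenius twists and $\varphi$ acts on $D_{\cris}$ with bounded slopes, the coefficients of the level-$n$ image are bounded by $p^{hn}$, where $h$ is the maximal Hodge-Tate weight. This order-$h$ growth characterizes $\Qp$-analytic distributions on $\Upsilon$, by an Amice-Vélu-Vishik type criterion applied to the Yager-type periods in $\widehat{\Fin}$. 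When $\Fin/F$ is finite, the hypothesis that $T(\tau^{-1})$ has no trivial quotient ensures that $1-\varphi$ is invertible where needed.

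Linearity over $\Lambda_{o_L}(G)$ and the interpolation of the finite-level maps then follow from the construction.
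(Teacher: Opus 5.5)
\textbf{Gap in Step 3.} Your Steps 1--2 reproduce the finite-level maps; compare the formula $\mathbf{L}_V^{G'}(x)=\sum_{\tau}[\tau]\,\mathbf{L}_V^{\Gamma_L}(\tau^{-1}(x))$ in Theorem \ref{thm:descent}. The content of the theorem, however, lies in Step 3, and Step 3 does not work as stated.

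After passing to $\varprojlim_{F'}$, you only know the $\Upsilon$-component of the image on locally constant functions, i.e.\ at finite-order characters of $\Upsilon$. For $h\geq 1$, a growth bound $O(p^{hn})$ on such data does not characterize a $\Qp$-analytic distribution. Amice--V\'elu--Vishik at order $h$ needs the values on $(x-a)^j\mathbf{1}_{a+p^n\zp}$ for all $j\le\lfloor h\rfloor$, and no finite layer supplies these. For example, $f\mapsto f'(0)$ is a distribution of order $1$ on $\zp$ that kills every locally constant function. So an order-$h$ estimate, with $h$ the maximal Hodge--Tate weight, does not determine an element of $D_{\Qp}(\Upsilon,\widehat{\Fin})$.

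The mechanism you give is also misplaced. Hodge--Tate weights and $\varphi$-slopes govern growth in the $\GaL$-variable, i.e.\ the $\cO_K(\mathbf{B})$ resp.\ $D_L(\GaL,K)$ factor. In the unramified variable nothing grows. What is true, and what is needed, is boundedness in $\Upsilon$: this is property (ii) of Theorem \ref{thm:descent}, which together with (i) pins the map down. You also give no uniform control with respect to the Fr\'echet seminorms of $D_L(\GaL,K)$, which is needed to land in the completed tensor product.

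\textbf{Missing idea: work at infinite level with the integral Yager module.} Following Kupferer and \cite{LZ}, the paper uses:
\begin{itemize}
\item $S_{\Fin/F}\cong\Lambda_{o_F}(\Upsilon)$ with $S_{\Fin/F}\subseteq\Lambda_{\widehat{o_{\Fin}}}(\Upsilon)$ (Lemma \ref{lem:Ups-cohomology});
\item the relative Wach module $\mathcal{N}_{\Fin/F}(T)\cong\mathcal{N}_{F/F}(T)\hten_{o_F}S_{\Fin/F}$, and $\HIW^1(X_\infty/F,T)\cong\mathcal{N}_{\Fin/F}(T(\tau^{-1}))^{\psi=1}$ (Lemma \ref{lem:KR-modules}).
\end{itemize}
Applying $1-\frac{\pi_L}{q}\varphi$ and the comparison isomorphism lands in $\cO_K(\mathbf{B})^{\psi=0}\hten_{o_F}S_{\Fin/F}\otimes D_{\cris,F}(V(\tau^{-1}))$. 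The Mellin transform together with $S_{\Fin/F}\subseteq\Lambda_{\widehat{o_{\Fin}}}(\Upsilon)\subseteq D_{\Qp}(\Upsilon,\widehat{\Fin})$ then gives the target. Boundedness in $\Upsilon$ and the joint completion come for free. Compatibility with the finite layers is checked arrow by arrow afterwards.

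This also repairs a second problem with your route. When $\Fin/F$ is infinite, the no-trivial-quotient hypothesis is not assumed, so your level-$F'$ maps from \cite{SV24} may only exist with denominators. That hypothesis is what gives $D_{\LT}(T(\tau^{-1}))^{\psi_L=1}=N(T(\tau^{-1}))^{\psi=1}$; its role is not to make $1-\varphi$ invertible. At infinite level, the analogue of \cite[Prop.\ 4.5]{LZ} makes the hypothesis unnecessary.
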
	

\subsection{Main results}

Now assume $F=L.$ The aim of this paper is
% to extend this definition slightly in order to cover also extensions of the type $o_L^\times \times\zp\times \textit{finite unramified} $ and
to prove its interpolation property   at characters $\chi: G_L\to \mathbb{C}_p^\times.$
 The following result is an adaption of \cite[Thm.\ 4.13]{LZ} and \cite[Cor.\ 6.6.10]{Ku}:  for the convenience of the reader we recall the strategy of its proof in \S \ref{sec:proof}.

We set
$$D_{\Qp}(\Upsilon,\widehat{\mathfrak{F}_\infty})^\circ:=\{\mu\in D_{\Qp}(\Upsilon,\widehat{\mathfrak{F}_\infty})\mid \mu^u=\delta_u\mu \text{ for all $u\in\Upsilon$} \},$$
where $\Upsilon$ acts on the coefficients of $D_{\Qp}(\Upsilon,\widehat{\mathfrak{F}_\infty})$ and $\delta_u$ denotes the element $u$ considered as delta-distribution within $D_{\Qp}(\Upsilon,\widehat{\mathfrak{F}_\infty}).$
Let $E/L$ be a finite extension. For any continuous homomorphism  $\varpi\colon \Upsilon \to o_E^{\times}$ with basis $\eta_{\varpi}$ for $E(\varpi)$,
we point out that there is a natural isomorphism (using that the unramified representation $\varpi$ has its period already in $\widehat{\mathfrak{F}_\infty}$ and forgetting  the basis $\eta_\varpi$)
 \begin{equation}\label{f:Dcrisunramifiedtwist}
\widehat{\mathfrak{F}_\infty}\otimes_L D_{\cris,L}(V(\varpi))\cong E\otimes_{\Qp} \widehat{\mathfrak{F}_\infty}\otimes_L D_{\cris,L}(V)
 \end{equation} as in \cite[before Prop.\ 4.12]{LZ}, which we take as an identification henceforth.
% , in particular in the statement of Theorem \ref{thm:omega-descent} above and in the interpolation formulae below.
Let
$$ D_{\QQ_p}(\Upsilon, \widehat{\mathfrak{F}_\infty}) \to E\otimes_{\QQ_p}\widehat{\mathfrak{F}_\infty}$$
be the map induced by $\varpi^{-1}$. This map together with \eqref{f:Dcrisunramifiedtwist} induces a map
$${\rm pr}_{\rm cris}^{\varpi}: D_L(\GaL,K) \hten_{\widehat{\mathfrak{F}_\infty}} D_{\Qp}(\Upsilon,\widehat{\mathfrak{F}_\infty})^\circ\otimes_L  D_{\mathrm{cris},L}(V(\tau^{-1})) \to D_L(\GaL,K)  \otimes_L D_{\mathrm{cris},L}(V(\varpi\tau^{-1})).$$

\begin{theorem}\label{thm:omega-descent}
For any continuous homomorphism  $\varpi\colon \Upsilon \to o_E^{\times}$ with basis $\eta_{\varpi}$ for $E(\varpi)$, the following diagram commutes:
\[\xymatrix{
  H^1_{\mathrm{Iw}}(X_\infty/L,V) \ar[d]^{\mathrm{pr}_{\mathrm{Iw}}^\varpi} \ar[r]^-{ \mathbf{L}_{V}^{G}} & D_L(\GaL,K) \hten_{\widehat{\mathfrak{F}_\infty}} D_{\Qp}(\Upsilon,\widehat{\mathfrak{F}_\infty})^\circ\otimes_L  D_{\mathrm{cris},L}(V(\tau^{-1})) \ar[d]^{\mathrm{pr}_{\mathrm{cris}}^\varpi} \\
  H^1_{\mathrm{Iw}}({L}_\infty /L,V(\varpi)) \ar[r]^-{\mathbf{L}_{V(\varpi)}^{\Gamma_L}} & D_L(\GaL,K)
  %\otimes_L \widehat{\mathfrak{F}_\infty}
  \otimes_L D_{\mathrm{cris},L}(V(\varpi\tau^{-1})),   }\]
where ${\rm pr}_{\rm Iw}^{\varpi}$ is defined to be the cup product with $\eta_{\varpi}$ followed by the corestriction map with respect to $X_\infty/L_\infty$.
%where the vertical maps arise by twisting followed by corestriction and evaluation at the trivial character $\mathbf{1}\colon\Upsilon\to o_L^\times,$ respectively.
\end{theorem}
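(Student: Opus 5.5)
We follow the strategy of \cite[Thm.\ 4.13]{LZ} and \cite[Cor.\ 6.6.10]{Ku}. The argument has three steps: reduce to finite layers of $\Fin/L$, check the diagram there by unwinding Galois descent, and pass to the limit.

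\textbf{Step 1: reduction to finite layers.} Write $\Fin=\bigcup_n F_n$ with $F_n/L$ finite unramified and set $\Upsilon_n:=\Gal(F_n/L)$. By Theorem \ref{thm:Kupferer}, $\mathbf{L}^G_V$ is the inverse limit of the regulator maps $\mathbf{L}^{\Gamma_{F_n}}_{V}$ for the Lubin--Tate extension $F_nL_\infty/F_n$. These are the maps of \cite{SV24} over the base $F_n$, which is unramified over $L$, so the same $\LT$ and period $\Omega$ can be used. A distribution $\mu$ satisfying the twisted equivariance $\mu^u=\delta_u\mu$ is determined by a compatible family of its images in $\widehat{\Fin}[\Upsilon_n]^\circ$. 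Under Galois descent,
\[\widehat{\Fin}[\Upsilon_n]^\circ\otimes D_{\cris,L}\;\cong\; \widehat{\Fin}\otimes_L D_{\cris,F_n}.\]
Evaluation at $\varpi^{-1}$ factors through some finite level only when $\varpi$ has finite order. For general continuous $\varpi$, we instead treat $\varpi$ as a limit of characters and use that both sides are continuous in $\varpi$. The cleaner route is to prove the statement first for finite-order $\varpi$, with $\varpi$ factoring through $\Upsilon_n$, and then deduce the general case.

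\textbf{Step 2: the finite-order case.} Here I would use the compatibility of the Lubin--Tate regulator of \cite{SV24} with corestriction from $F_nL_\infty$ to $L_\infty$ and with twisting by unramified characters. Concretely, $\mathrm{pr}^\varpi_{\rm Iw}$ is cup product with $\eta_\varpi$ followed by corestriction. The construction of $\mathbf{L}^{\Gamma}$ goes through $(\varphi,\Gamma)$-modules over the Robba ring, via $D_{\cris}$ and the Berger/Kisin--Ren comparison. Twisting by an unramified character changes the $\varphi$-module only through the period of $\varpi$, which lives in $\widehat{\Fin}$. This period is exactly what gives the identification \eqref{f:Dcrisunramifiedtwist}. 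Corestriction on the Galois side corresponds to the trace $\widehat{\Fin}\otimes_L D_{\cris,F_n}\to \widehat{\Fin}\otimes_L D_{\cris,L}$. On the distribution side this trace is the sum over $\Upsilon_n$ weighted by $\varpi^{-1}$, which is precisely evaluation at $\varpi^{-1}$ on $\widehat{\Fin}[\Upsilon_n]^\circ$.

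So the key computation is the following. Take the Coleman/Perrin-Riou map at level $F_n$ applied to $x\cup\eta_\varpi$. Multiply the result by the period $t_\varpi\in\widehat{\Fin}$ and take the trace. This should equal $\sum_{u\in\Upsilon_n}\varpi(u)^{-1}u(\cdot)$ applied to $\mathbf{L}^{\Gamma_{F_n}}_V(x)$. I would verify this at the level of the defining formula $\mathbf{L}=(1-\varphi)\circ(\text{Berger comparison})$, since $\varphi$ commutes with these unramified twists up to $\varpi(\mathrm{Frob})$.

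\textbf{Step 3: infinite-order $\varpi$.} Both compositions in the diagram are continuous in $\varpi$ for the natural topology on $\Hom(\Upsilon,o_E^\times)$. To use this, I would bring in the $\Lambda(\Upsilon)$-linearity from Theorem \ref{thm:Kupferer}, together with the fact that $\mathrm{pr}^\varpi$ factors as the twist $\mathrm{Tw}_{\varpi}$ followed by projection to the $\Upsilon$-coinvariants. One reduces to the trivial character after replacing $V$ by $V(\varpi)$ over $X_\infty$, using a compatibility $\mathbf{L}^G_{V(\varpi)}=\mathrm{Tw}_\varpi\circ\mathbf{L}^G_V$ of Kupferer's map. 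Then the case $\varpi=\mathbf 1$ is the statement that corestriction to $L_\infty$ corresponds to evaluation at the trivial character. That follows from the interpolation over finite layers in Step 1, together with continuity of the projection $D_{\Qp}(\Upsilon,\widehat{\Fin})\to\widehat{\Fin}$.

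\textbf{Main obstacle.} I expect the hardest part to be the twist compatibility $\mathbf{L}^G_{V(\varpi)}=\mathrm{Tw}_\varpi\circ\mathbf{L}^G_V$ for an infinite-order unramified $\varpi$. It requires tracking the period $\eta_\varpi$ through the $\varphi$-operator and the Frobenius-semilinear structure on $\widehat{\Fin}$, as in \cite[Prop.\ 4.12]{LZ}. I would try to do this via Dwork's lemma, presenting the period as a solution of $\varphi(y)=\varpi(\mathrm{Frob})\,y$ in $\widehat{o_{\Fin}}$.
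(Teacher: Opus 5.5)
Your Step~3 is exactly the paper's proof: the paper deduces the theorem immediately from the twist compatibility of Proposition~\ref{prop:omegatwist}, which it takes, as you propose, from the analogue of \cite[Prop.\ 4.12]{LZ} (cf.\ \cite[Prop.\ 6.6.9]{Ku}); this is combined with the descent diagram of Theorem~\ref{thm:descent}(i) for the trivial layer $F'=L$ and the identity $\mathrm{Tw}_{\varpi^{-1}}(\mu)(\mathbf{1})=\mu(\varpi^{-1})$ from \eqref{f:evaluationtwist}. Since this handles every continuous $\varpi$ at once, your Steps~1--2 and the continuity-in-$\varpi$ remark are superfluous. Moreover, the claim in Step~1 that an element of $D_{\Qp}(\Upsilon,\widehat{\Fin})^\circ$ is determined by its finite-level images is false for general distributions without the boundedness condition of Theorem~\ref{thm:descent}(ii); a counterexample is $\log([u])$ times a Yager generator.
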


For a character $\varrho:G\to o_E^{\times}$ with basis $\eta_\varrho$ for $E(\varrho),$ let $y_{\varrho^{-1},n}$ (and just $y_{\varrho^{-1}}$ for $n=0$) denote the image of $y \in  H^1_{\Iw}(X_\infty/L,T)$ under the map
 \begin{equation}\label{f:twistprojection}
 \mathrm{pr}_{n,\varrho^{-1}}: H^1_{\Iw}(X_\infty/L,T)\xrightarrow{ -\cup \eta_{\varrho^{-1}}
 }H^1_{\Iw}(X_\infty/L,T(\varrho^{-1}))\xrightarrow{\mathrm{cor}}H^1 (
 L_n,T(\varrho^{-1}))\to H^1 ( L_n,V(\varrho^{-1})).
 \end{equation}

From Theorem \ref{thm:omega-descent}, we deduce the following interpolation formula.

%\todo{Action of Frobenius on $\widehat{\mathfrak{F}_\infty}$?}
\begin{theorem}\label{thm:interpolationRegKu} Let $\varpi\colon \Upsilon \to o_E^{\times}$ be a continuous
	homomorphism (also considered as continuous representation of $G$) and assume that $V^*(1)$ is $L$-analytic with $\mathrm{Fil}^{-1}D_{\cris,L}(V^*(1)(\varpi))=D_{\cris,L}(V^*(1)(\varpi))$  and such that $D_{\cris,L}(V^*(1)(\varpi))^{\varphi_L=\pi_L^{-1}}=D_{\cris,L}(V^*(1)(\varpi))^{\varphi_L=1}=0$.
 Let $j\in\mathbb{Z}$   and let $\rho$ be a finite order character of $\Gamma_L$ (whence also of $G$) of conductor $n,$
  i.e., $\rho$ factorizes over $G_n:= \Gal(L_n/L),$ but not over $G_{n-1},$  Then, setting $\varrho= \rho\chi_{\LT}^j\varpi,$ for any $y\in H^1_{\rm Iw}(X_\infty/L,T)$ we have the following:
\begin{enumerate}
\item If $j\geq 0,$ we have  \footnotesize
\begin{align*}
   \Omega^{j} \mathbf{L}_V^{G}(y)(\rho\chi_{\LT}^j)(\varpi)&=  j!
  \begin{cases}
\displaystyle    (1-\pi_L^{-1-j}\varphi_L^{-1})^{-1}(1-\frac{\pi_L^{j+1}}{q}\varphi_L)\Big(\widetilde{\exp}^*_{L,V(\varpi^{-1}\chi_{\LT}^{-j}),\id}(y_{\varrho^{-1}})\otimes e_{ j}\Big), & \text{if  $n=0$,} \\
  %{[L_n:L]  }
 \displaystyle   \pi_L^{n(1+j)}\tau(\rho)^{-1} \varphi_L^{n}  \sum_{\sigma \in G_n}\rho(\sigma)\Big( \widetilde{\exp}^*_{L_n,V(\varpi^{-1}\chi_{\LT}^{-j}),\id}(\sigma^{-1} (y_{\varpi^{-1}\chi_{\LT}^{-j},n}))  \otimes e_{ j}\Big) , & \text{if $n>0$.}
  \end{cases}
\end{align*}\normalsize
\item If $j\leq -1,$ we have \footnotesize
\begin{align*}
 \Omega^{j} \mathbf{L}_V^{G}(y)(\rho\chi_{\LT}^j)(\varpi)&= \\ 
 \frac{{ (-1)^{j+1}}}{(-1-j)!}&
\begin{cases}
 \displaystyle    (1-\pi_L^{-1-j}\varphi_L^{-1})^{-1}(1-\frac{\pi_L^{j+1}}{q}\varphi_L)\Big(\widetilde{\log}_{L,V(\varpi^{-1}\chi_{\LT}^{-j}),\id}(y_{\varrho^{-1}})\otimes e_{ j}\Big), & \text{if  $n=0$,} \\
 %   {[L_n:L]  }
 \displaystyle     \pi_L^{n(1+j)}\tau(\rho)^{-1} \varphi_L^{n}  \sum_{\sigma \in G_n}\rho(\sigma)\Big(\widetilde{\log}_{L_n,V(\varpi^{-1}\chi_{\LT}^{-j}),\id}(\sigma^{-1}(y_{\varpi^{-1}\chi_{\LT}^{-j},n}))  \otimes e_{ j}\Big),  & \text{if $n>0$.}
  \end{cases}
\end{align*}\normalsize
\end{enumerate} 
Here $\mathbf{L}_{V}^G(y)(\rho \chi_\LT^j)(\varpi) \in \CC_p \otimes_L D_{{\rm cris},L}(V(\varpi^{-1}\tau^{-1}))$ denotes the image of $\mathbf{L}_V^G(y)$ under the map induced by $\rho \chi_\LT^j \varpi$, i.e., the image of
$${\rm pr}_{\rm cris}^{\varpi^{-1}} (\mathbf{L}_V^G(y)) \in D_L(\Gamma_L,K) \otimes_L D_{{\rm cris},L}(V(\varpi^{-1}\tau^{-1}))$$
under the map
$$D_L(\Gamma_L,K) \otimes_L D_{{\rm cris},L}(V(\varpi^{-1}\tau^{-1})) \to \CC_p \otimes_L D_{{\rm cris},L}(V(\varpi^{-1}\tau^{-1}))$$
induced by $\rho \chi_\LT^j$.
In both cases, we assume that $1-\pi_L^{-1-j}\varphi_L^{-1}, 1-\frac{\pi_L^{j+1}}{q}\varphi_L $ are invertible on $D_{\cris,L}(V(\varpi^{-1}\tau^{-1}))$ when $\rho$ is the trivial character, i.e., $n=0.$ In addition,  we have
\[\sum_{\sigma \in G_n}\rho(\sigma)\Big( \widetilde{\exp}^*_{L_n,V(\varpi^{-1}\chi_{\LT}^{-j}),\id}(\sigma^{-1} (y_{\varpi^{-1}\chi_{\LT}^{-j},n}))  \otimes e_{ j}\Big)=\widetilde{\exp}^*_{L,V(\varrho^{-1}),\id}( y_{\varrho^{-1}})  \otimes e_{ j}\]
and
\[\sum_{\sigma \in G_n}\rho(\sigma)\Big(\widetilde{\log}_{L_n,V(\varpi^{-1}\chi_{\LT}^{-j}),\id}(\sigma^{-1}(y_{\varpi^{-1}\chi_{\LT}^{-j},n}))  \otimes e_{ j}\Big)=\widetilde{\log}_{L,V(\varrho^{-1}),\id}(y_{\varrho^{-1}})  \otimes e_{ j}.\]
\end{theorem}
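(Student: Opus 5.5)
The proof reduces everything to the one-variable Lubin–Tate case by means of Theorem \ref{thm:omega-descent}, and then applies the known interpolation formula for $\mathbf{L}^{\Gamma_L}$ (Theorem \ref{thm:adjointformula}, i.e.\ \cite[Cor.\ 2.6.5]{SanoV}).

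First, by definition, $\mathbf{L}_V^G(y)(\rho\chi_\LT^j)(\varpi)$ is the evaluation at $\rho\chi_\LT^j$ of $\mathrm{pr}^{\varpi^{-1}}_{\mathrm{cris}}(\mathbf{L}_V^G(y))$. Applying Theorem \ref{thm:omega-descent} with $\varpi^{-1}$ in place of $\varpi$ gives
\[
\mathrm{pr}^{\varpi^{-1}}_{\mathrm{cris}}(\mathbf{L}_V^G(y))=\mathbf{L}^{\Gamma_L}_{V(\varpi^{-1})}\bigl(\mathrm{pr}^{\varpi^{-1}}_{\mathrm{Iw}}(y)\bigr).
\]
So the left-hand side equals $\Omega^j\mathbf{L}^{\Gamma_L}_{V(\varpi^{-1})}(y')(\rho\chi_\LT^j)$ with $y'=\mathrm{pr}^{\varpi^{-1}}_{\mathrm{Iw}}(y)\in H^1_{\Iw}(L_\infty/L,V(\varpi^{-1}))$.

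Next I check that $V(\varpi^{-1})$ satisfies the hypotheses of Theorem \ref{thm:adjointformula}. Twisting by an unramified character leaves the representation crystalline and analytic and does not change the Hodge–Tate weights, so $V(\varpi^{-1}\tau^{-1})$ is still crystalline and analytic with nonnegative weights. The conditions on $D_{\cris,L}(V^*(1)(\varpi))$, namely the filtration condition and the vanishing of the $\varphi_L=\pi_L^{-1}$ and $\varphi_L=1$ eigenspaces, are exactly the hypotheses of that theorem for $V(\varpi^{-1})$, because $V(\varpi^{-1})^*(1)=V^*(1)(\varpi)$. Applying the theorem with $j\ge0$ or $j\le-1$ yields the dual exponential or logarithm formulas, with the Euler factors for $n=0$ and the Gauss sum $\tau(\rho)^{-1}$ and $\varphi_L^n$ factors for $n>0$. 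These formulas are evaluated at the classes obtained from $y'$ by twisting with $\chi_\LT^{-j}$ (and $\rho^{-1}$) and projecting to level $n$.

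The step I expect to be the main obstacle is identifying these classes with $y_{\varrho^{-1}}$ and $y_{\varpi^{-1}\chi_\LT^{-j},n}$. Since $\varrho^{-1}=\rho^{-1}\chi_\LT^{-j}\varpi^{-1}$, this amounts to checking that the twisting and corestriction maps commute. Specifically, cup product with $\eta_{\varpi^{-1}}$ followed by corestriction along $X_\infty/L_\infty$, then twisting by $\chi_\LT^{-j}$ and corestricting to $L_n$, should agree with the single map $\mathrm{pr}_{n,\varpi^{-1}\chi_\LT^{-j}}$ of \eqref{f:twistprojection}. This holds by compatibility of cup products with corestriction, since $\eta_{\chi_\LT^{-j}}$ is $G_{L_\infty}$-invariant up to the twist. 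Along the way I must keep track of the identification \eqref{f:Dcrisunramifiedtwist}, the choice of bases, and the resulting $e_j$.

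The final two identities express the $\rho$-isotypic sum through level $L$. I would prove them from the standard relation: for $z\in H^1(L_n,W)$, the twist by $\rho^{-1}$ of the corestriction equals $\sum_\sigma\rho(\sigma)\sigma^{-1}z$ in $H^1(L_n,W(\rho^{-1}))^{G_n}=H^1(L,W(\rho^{-1}))$. Combined with the Galois equivariance of $\widetilde{\exp}^*$ and $\widetilde{\log}$, which commute with $\sigma$ acting on $L_n\otimes D_{\cris}$, this gives both identities, once the period of $\rho$ is absorbed via $D_{\cris,L}(V(\varrho^{-1}))$ for the unramified-at-level-$n$ twist.
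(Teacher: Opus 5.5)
Your proposal is correct and follows the paper's proof. The paper likewise applies Theorem \ref{thm:omega-descent} with $\varpi^{-1}$ to rewrite $\mathbf{L}_V^G(y)(\rho\chi_\LT^j)(\varpi)$ as $\mathbf{L}^{\Gamma_L}_{V(\varpi^{-1})}(\mathrm{pr}^{\varpi^{-1}}_{\mathrm{Iw}}(y))(\rho\chi_\LT^j)$, and then invokes Theorem \ref{thm:adjointformula} for $V(\varpi^{-1})$. Your additional checks (transferring the hypotheses via $V(\varpi^{-1})^*(1)=V^*(1)(\varpi)$, and identifying the twisted projections of $\mathrm{pr}^{\varpi^{-1}}_{\mathrm{Iw}}(y)$ with $y_{\varrho^{-1}}$ and $y_{\varpi^{-1}\chi_\LT^{-j},n}$) are details the paper leaves implicit. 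The final two identities need not be reproved: they can be read off directly from the corresponding identities in Theorem \ref{thm:adjointformula} applied to $V(\varpi^{-1})$.
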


\begin{proof}
%From Theorem \ref{thm:Kupferer} (applied to $\varpi^{-1}$), the relation \eqref{f:evaluationtwist} and Theorem \ref{thm:adjointformula} below (applied to $V(\varpi^{-1})$) we derive immediately the desired interpolation formula:
By Theorem \ref{thm:omega-descent}, we have
$$\mathbf{L}_{V}^G(y)(\rho \chi_\LT^j)(\varpi) = \mathbf{L}_{V(\varpi^{-1})}^{\Gamma_L}({\rm pr}_{\rm Iw}^{\varpi^{-1}}(y))(\rho \chi_\LT^j).$$
Combining this with the interpolation formula \cite[Cor.\ 2.6.5]{SanoV} (see Theorem \ref{thm:adjointformula} below) for $\mathbf{L}_{V(\varpi^{-1})}^{\Gamma_L}$, we obtain the claim.
\end{proof}

\begin{remark}\label{rem:CC}
In the cyclotomic case (i.e., when $L=\QQ_p$, $\pi_L=p$, and ${\rm LT}=\widehat \GG_m$) we reobtain \cite[Thm.\ 4.15]{LZ}
even for $p=2$. Unfortunately, we have more conditions than in (loc.\ cit.), which are due to the fact that the interpolation property for the regulator map in the Lubin-Tate setting is obtained by duality from the interpolation property of the big exponential map. If one wants to remove them, one has to prove \cite[Thm.\ B.5]{LZ} for $p=2$, since the process of extending the result to the unramified layer does not require the condition $p$ odd at all. It seems to us that for the formulae in \cite{B}, on which the appendix in \cite{LZ} is based, does not need this condition either.
\end{remark}

\begin{remark}
In \cite{SaV26}, we apply Theorem \ref{thm:interpolationRegKu} to extend interpolation properties of the Katz $p$-adic $L$-function over an imaginary quadratic field.
\end{remark}

Furthermore, in \S \ref{sec:unramiedtowers} -  inspired by \cite[\S 3.2]{vener} and \cite{ochiai}  - we construct under some technical condition a big exponential map
\begin{equation*}
   D_{\dR,F}(V)\otimes_{o_F} S_{\mathfrak{F}_\infty/F} \xrightarrow{{\mathbb{E}\mathrm{xp}}_{\mathfrak{F}_\infty/F,V}} \HIW^1(\mathfrak{F}_\infty/F, V)
\end{equation*}
for the unramified tower $\mathfrak{F}_\infty/F$ as above, which interpolates the usual Bloch-Kato exponential maps for all characters $\varpi:\Upsilon\to E^\times,$ for $E$ some finite extension of $F.$ We also construct a big {\em dual} exponential map
\begin{equation*}
    \HIW^1(\mathfrak{F}_\infty/F, V)\xrightarrow{{\mathbb{E}\mathrm{xp}}^*_{\mathfrak{F}_\infty/F,V }}   D_{\dR,F}(V )\otimes_{o_F} S_{\mathfrak{F}_\infty/F}
\end{equation*}
for the same tower, which similarly interpolates the usual {\em dual} Bloch-Kato exponential maps.

We use these results in \S \ref{sec:Colmez} in order to get rid of conditions in the version of a Colmez style reciprocity law in the Lubin-Tate setting from \cite{SanoV}, see Theorem \ref{thm:ColmzezImproved}. For that purpose we establish in Theorem \ref{thm:bigColmez} a new reciprocity law \`{a} la Colmez for the big dual exponential maps for unramified towers, see diagram \eqref{f:interpolationunramified}.

Finally, in \S \ref{sec:comparison}, we compare Kupferer's regulator map with the 2-variable regulator map of Loeffler-Zerbes \cite{LZ} when $F=L=\QQ_p$. We prove in Theorem \ref{thm:comparison} that they essentially coincide. In particular, we see that Kupferer's regulator map is independent of $\pi_L$ if $\mathfrak{F}_\infty L_\infty = \mathfrak{F}_\infty \QQ_p(\mu_{p^\infty})$. This result can be regarded as a generalization of \cite[Prop. I.3.9]{deshalit} (see also \cite[Thm. 6.11]{LZ}).

\subsection{Notation}\label{sec:notation}

In this paper, we basically use the same notation as in \cite{SV15}, \cite{SV24} and \cite{SanoV}.

%Let $L/\QQ_p$ be a finite   extension of degree $d$, $o_L$ the ring of integers of $L$, $\pi_L \in o_L$ a fixed prime element, $k_L := o_L/\pi_L o_L$ the residue field with cardinality $q := \# k_L$,
%% $e$ the absolute ramification index of $L$
%and $G_L:=\Gal(\overline{L}/L).$ We always use the absolute value $|\cdot |_p$ on $\mathbb{C}_p$ which is normalized by $|\pi_L|_p = q^{-1}$.
%We warn the reader, though, that we will repeatedly use the references \cite{BSX}, \cite{FX}, \cite{La}, \cite{Sc1}, \cite{Sc}, \cite{ST}, and \cite{ST2} in which the absolute value is normalized differently from this paper by $|p| = p^{-1}$. Our absolute value is the $d$th power of the one in these references. The transcription of certain formulas to our convention will usually be done silently.

We consider a Lubin-Tate formal group ${\rm LT} = {\rm LT}_{\pi_L}$ over $o_L$ corresponding to the prime element $\pi_L$.
For the definitions of $g_\LT(Z)$, $\log_\LT(Z)$, and $\partial_{\rm inv}$, see \cite[\S 1.3]{SanoV}.

Let $\TLT$ be the Tate module of ${\rm LT}$. We fix a generator $\eta=(\eta_n)_n$ of $\TLT$. The absolute Galois group $G_L:=\Gal(\overline L/L)$ of $L$ acts on $\TLT$ via the Lubin-Tate character
$$\chi_{\LT} : G_L \longrightarrow o_L^\times.$$

We fix a generator $\eta^{\rm cyc}$ of $\ZZ_p(1)$. Let $\TLT':=\TLT^\ast(1)$ denote the Tate module of the $p$-divisible group Cartier dual to ${\rm LT}$. Let $\Omega$ be the period of $\LT$ associated to the basis $\eta':=(\eta \mapsto \eta^{\rm cyc})$ of $\TLT'$.
%
%Then $\TLT$ is
%a free $o_L$-module of rank one, say with generator $\eta=(\eta_n)_n$, and the action of
%$G_L$ on $\TLT$ is given by a continuous character
%$$\chi_{\LT} : G_L \longrightarrow o_L^\times.$$
%Let $\TLT'$ denote the Tate module of the $p$-divisible group Cartier dual to ${\rm LT}$ with period $\Omega$ (depending on the choice of a generator of $\TLT'$, see \cite{ST2}), which again is a free $o_L$-module of rank one.
The $G_L$-action on $\TLT'$ is given by the continuous character
\begin{equation*}\label{def tau}
\tau := \chi_{\cyc}\cdot\chi_{\LT}^{-1},
\end{equation*}
where $\chi_{\cyc}$ is the cyclotomic character.

Let $K\subseteq \mathbb{C}_p$ be any complete subfield containing the period $\Omega$  and the maximal abelian extension $L^{\rm ab}$ of $L$.

For $n \geq 0$ we let $L_n/L$ denote the extension (in $\mathbb{C}_p$) generated by the $\pi_L^n$-torsion points of ${\rm LT}$, and set $G_n:=\Gal(L_n/L)$. Also, we set $L_\infty := \bigcup_n L_n$, $\Gamma_L := \Gal(L_\infty/L)$, and $H_L := \Gal(\overline{L}/L_\infty)$.
Let $\Rep_{o_L,f}(G_L)$ denote the category of free $o_L$-modules of finite rank  equipped with a continuous linear $G_L$-action.
Let $\Rep_{L}(G_L)$ denote the category of finite dimensional $L$-vector spaces equipped with a continuous linear $G_L$-action.
For $V\in \Rep_L(G_L)$ we put
\[D_{\cris,L}(V) := (B_{\cris} \otimes_{L_0} V)^{G_L}\;\; \mbox{ and } \;\; D_{\dR,L}(V) := ( B_{\dR} \otimes_{\qp} V)^{G_{L}},\]
where $L_0$ denotes the maximal unramified extension of $\QQ_p$ inside $L$.
The second definition will be used also in the more general form
\[D_{\dR,L'}(V) := D_{\dR,L'}(V_{|G_{L'}})=( B_{\dR} \otimes_{\qp} V)^{G_{L'}}\]
for $L'$ any algebraic extension of $L$. Moreover, we abbreviate $D^0_{\dR,L'}(V):=\Fil^0(  D_{\dR,L'}(V)).$
Furthermore we write $D_{\dR,L'}^{\id}(V)$    for the direct summand $( B_{\dR} \otimes_L V)^{G_{L'}}$   of $D_{\dR,L'}(V).$

%Recall that $T_\pi=o_L\eta$ denotes the Tate module of the Lubin-Tate formal group ${\rm LT}$.
We set $V_\pi:=L\otimes_{o_L} T_\pi \in\Rep_L(G_L)$.
Let $t_{\LT}, t_{\QQ_p} \in B_{\dR}$ be the periods for $\LT$ and $\widehat \GG_m$ respectively.
%We write
%$t_{\mathbb{Q}_p} := \log_{\widehat{\mathbb{G}}_m}(\omega_{\widehat{\mathbb{G}}_m})$ for the multiplicative formal group $\widehat{\mathbb{G}}_m$.
Then, for $j\geq 1,$
\begin{equation}\label{def dr}
\mathbf{d}_j := t_{\LT}^j t_{\mathbb{Q}_p}^{-1} \otimes (\eta^{\otimes (-j)} \otimes \eta^{\cyc}),
\end{equation}
is an $L$-basis of  $D^0_{\dR,L}(V_\pi^{\otimes (-j)}(1))$.
We also set $e_j:=t_{\LT}^{-j}\otimes \eta^{\otimes j}\in D_{\cris,L}(L(\chi_{\LT}^j)).$

\section{Lubin-Tate regulator maps}

The aim of this section is to prove Theorem \ref{thm:omega-descent}. For the convenience of the reader, we recall the definition of the Lubin-Tate regulator map of \cite{SV24} in \S \ref{sec:regSV}. We also recall the interpolation property proved in \cite{SanoV} (see Theorem \ref{thm:adjointformula}).  In \S \ref{sec:regKu}, we recall the construction of Kupferer's regulator map \cite{Ku}. Finally, in \S \ref{sec:proof}, we give a proof of Theorem \ref{thm:omega-descent}.
%For the convenience of the reader and in order to clear some details we recall the construction of Kupferer's regulator map. We start with the definition of the Lubin-Tate regulator map of \cite{SV24}.

\subsection{Definition of the regulator map by Schneider-Venjakob}\label{sec:regSV}

We recall regulator maps defined in \cite[\S 5.1]{SV24}.  We use the same notation in loc. cit. In particular, we write
\begin{equation}\label{f:Mellintransform}
 \mathfrak{M}: D(\Gamma_L,K)\xrightarrow{\sim} {\mathcal{O}_{K}(\mathbf{B})}^{\psi_L=0}; \ \lambda\mapsto \lambda\cdot \eta(1,Z)
\end{equation}
for the Mellin transform, where we write $D(\Gamma_L, K)$ for the locally analytic distribution algebra on $\Gamma_L$, $\cO_{K}(\mathbf{B})$ for the ring of rigid analytic functions on the open unit disk $\mathbf{B}$ over $K$, and we set
$$\eta(1,Z):=\exp(\Omega {\rm log}_{\rm LT}(Z))\in \mathcal{O}_{K}(\mathbf{B}).$$
The ring of rigid analytic functions on the open unit disk $\mathbf{B}$ over $L$ is simply denoted by $\cO$. For a subfield $L'\subseteq \CC_p$ we often abbreviate $\cO_{L'}(\mathbf{B})$ to $\cO_{L'}$. Also, we write $\Rep_{o_L,f}^{\cris}(G_L)$ and $\Rep_{o_L,f}^{\cris,\mathrm{an}}(G_L)$ for the full subcategories of $\Rep_{o_L,f}(G_L)$, the    category of free $o_L$-modules of finite rank with a continuous linear $G_L$-action, consisting    of those $T$ which are free over $o_L$ and such that the representation $V := L \otimes_{o_L} T$ is crystalline and in addition   analytic, respectively. Here, $V$ is called {  analytic},   if   the filtration on $D_{\dR}(V)_\mathfrak{m}$ is trivial for each maximal ideal $\mathfrak{m}$ of $L \otimes_{\mathbb{Q}_p} L$ which does not correspond to the identity $\id : L \to L$. Moreover, $D_{\LT}(T)$ denotes the Lubin-Tate $(\varphi_L,\Gamma_L)$-module attached to any $T\in \Rep_{o_L,f}^{\cris}(G_L)$ (see \cite[Thm.\ 1.6]{KR}, also recalled in \cite[Thm.\ 4.2]{SV15}), while   $N(T)$ denotes the attached Wach module \`{a} la Kisin-Ren  for $T$ in $\Rep_{o_L,f}^{\cris,\mathrm{an}}(G_L)$, see \cite[Ch.\ 3, especially before Rem.\ 3.1.6]{SV24}.

Recall that we set $\tau := \chi_{\cyc}\cdot\chi_{\LT}^{-1}$.
 Let   $T$ be in $\Rep_{o_L,f}^{\cris}(G_L)$    such that $T(\tau^{-1})$ belongs to  $\Rep_{o_L,f}^{\cris,\mathrm{an}}(G_L)$ with all   Hodge-Tate weights   in $[0,r],$ and such that $V:=L\otimes_{o_L} T$ does not have any quotient isomorphic to $L(\tau).$ Under these assumptions Schneider and Venjakob \cite[\S 5.1]{SV24} define the regulator map
\begin{align}\label{f:regulatorSV}
\mathbf{L}_V^{\Gamma_L}: & H^1_{\Iw}(L_\infty/L,T)\to  D(\Gamma_L,K)\otimes_L D_{\cris,L}(V(\tau^{-1}))
  %\mathcal{L}_V^0: & H^1_{\Iw}(L_\infty/L,T)\to   \mathcal{O}_L(\mathbf{B})^{\psi_L=0}\otimes_L D_{\cris,L}(V(\tau^{-1})), \\
 % \mathcal{L}_V: & H^1_{\Iw}(L_\infty/L,T)\to D(\Gamma_L,\Cp)\otimes_L D_{\cris,L}(V)
\end{align}
as   the composite \small
\begin{align}\label{f:defregulator}
 \notag H^1_{\Iw}(L_\infty{/L},T)&\stackrel{{\rm Exp}^\ast}{\simeq} D_{\LT}(T(\tau^{-1}))^{\psi_L=1}=N(T(\tau^{-1}))^{\psi_{D_{\LT}(T(\tau^{-1}))}=1} \xrightarrow{1-\frac{\pi_L}{q}\varphi_L}\varphi^*_L(N(V(\tau^{-1})))^{\psi_L=0}\\
  &\hookrightarrow \mathcal{O}^{\psi_L=0} \otimes_L D_{\cris,L}(V(\tau^{-1})) \subseteq {\mathcal{O}_{K}(\mathbf{B})}^{\psi_L=0}\otimes_L D_{\cris,L}(V(\tau^{-1}))\\
\notag &\xrightarrow{\mathfrak{M}^{-1}\otimes \id}D(\Gamma_L,K)\otimes_L D_{\cris,L}(V(\tau^{-1})).
%\to D(\Gamma_L,K)\otimes_L D_{\cris,L}(V),
\end{align}\normalsize
% where the last map sends $\mu\otimes d \in D(\Gamma_L,K)\otimes_L D_{\cris,L}(V(\tau^{-1}))$ to $\mu\otimes d \otimes \mathbf{d}_1\in D(\Gamma_L,K)\otimes_L D_{\cris,L}(V(\tau^{-1}))\otimes_L D_{\cris,L}(L(\tau))\simeq D(\Gamma_L,K)\otimes_L D_{\cris,L}(V). $ Note that
% $$D:=D_{\cris,L}(L(\tau))=D^0_{\dR,L}(L(\tau)) = L \mathbf{d}_1$$
%with $\mathbf{d}_1= t_{\LT} t_{\mathbb{Q}_p}^{-1} \otimes (\eta^{\otimes -1} \otimes \eta^{\cyc})$, where $L(\chi_{\LT})=L\eta$ and $L(\chi_{\cyc})=L\eta^{\cyc}.$

%Alternatively, in order to stress that the regulator is essentially the map $1- \varphi_L, $ one can rewrite this as \small
% \begin{equation*}
%  \begin{aligned}
%  	H^1_{\Iw}&(L_\infty/L,T) \stackrel{{\rm Exp}^\ast}{\simeq} D_{\LT}(V(\tau^{-1}))^{\psi_L=1}=N(T(\tau^{-1}))^{\psi_{D_{\LT}(T(\tau^{-1}))}=1} \\
%  &\hookrightarrow \Big(N(V(\tau^{-1}))\otimes_L D\Big) ^{\Psi =1}\xrightarrow{1- \varphi_L}\varphi^*_L(N(V(\tau^{-1})))^{\Psi_L=0}\otimes_L D \\
%  &\hookrightarrow \mathcal{O}^{\Psi_L=0} \otimes_L D_{\cris,L}(V(\tau^{-1}))\otimes_L D \subseteq {\mathcal{O}_{K}(\mathbf{B})}^{\Psi_L=0}\otimes_L D_{\cris,L}(V)\\
% &\xrightarrow{\mathfrak{M}^{-1}\otimes \id}D(\Gamma_L,K)\otimes_L D_{\cris,L}(V)
%  \end{aligned}
%\end{equation*}\normalsize
% where the $ \hookrightarrow $ in the second line sends $n$ to $n\otimes \mathbf{d}_1$ and the $\varphi_L$ as well as $\Psi$ now acts diagonally.

 As mentioned after \cite[Prop.\ 5.1.1]{SV24} one can extend the definition of the regulator map to the case of all  $T$   in $\Rep_{o_L,f}^{\cris}(G_L)$    such that $T(\tau^{-1})$ belongs to  $\Rep_{o_L,f}^{\cris,\mathrm{an}}(G_L)$, if one is willing to accept denominators in the target, e.g. by replacing $D(\Gamma_L,K)$   by its  total ring of quotients.

For a finite extension $L'/L$, we define
$$\widetilde{\exp}^*_{L',V,\id}: H^1(L',V)\to D_{{\rm cris},L'}(V(\tau^{-1}))$$
by
$$\widetilde{\exp}_{L',V,\id}^*:= \mathbb{T}_{\tau^{-1}}\circ \exp_{L',V}^*,$$
where $\exp^\ast_{L',V}: H^1(L',V)\to D_{{\rm cris},L'}(V)$ is the usual dual exponential map and
\begin{equation}\label{f:dualexpcomp}
\mathbb{T}_{\tau^{-1}}: D_{\cris,L'}(V) \to D_{\cris,L'}(V(\tau^{-1}))
\end{equation}
is the map sending $b\otimes v$ to $b\frac{t_{\mathbb{Q}_p}}{t_{\LT}}\otimes v\otimes \eta\otimes\eta_{\cyc}^{\otimes (-1)}$. Similarly,
$$\widetilde{\log}_{L',V,\id}: H^1(L',V)\to D_{{\rm cris},L'}(V(\tau^{-1}))$$
is defined.

 The following result is  \cite[Cor.\ 2.6.5]{SanoV}.

\begin{theorem}\label{thm:adjointformula}
Assume that $V^*(1)$ is $L$-analytic with $\mathrm{Fil}^{-1}D_{\cris,L}(V^*(1))=D_{\cris,L}(V^*(1))$  and such that $D_{\cris,L}(V^*(1))^{\varphi_L=\pi_L^{-1}}=D_{\cris,L}(V^*(1))^{\varphi_L=1}=0$.
 Let $j\in\mathbb{Z}$   and let $\rho$ be a finite order character of $\Gamma_L$ of conductor $n,$ i.e., $\rho$ factorizes over $G_n,$ but not over $G_{n-1}.$  Then for any $y\in H^1_{\rm Iw}(L_\infty/L,T)$ we have the following:
\begin{enumerate}
\item If $j\geq 0,$ we have \footnotesize
\begin{align*}
   \Omega^{j} \mathbf{L}_V^{\Gamma_L}(y)(\rho\chi_{\LT}^j)&=  j!
\begin{cases}
\displaystyle    (1-\pi_L^{-1-j}\varphi_L^{-1})^{-1}(1-\frac{\pi_L^{j+1}}{q}\varphi_L)\Big(\widetilde{\exp}^*_{L,V(\chi_{\LT}^{-j}),\id}(y_{\chi_{\LT}^{-j}})\otimes e_{ j}\Big), & \text{if  $n=0$,} \\
  %{[L_n:L]  }
 \displaystyle   \pi_L^{n(1+j)}\tau(\rho)^{-1} \varphi_L^{n}  \sum_{\sigma \in G_n}\rho(\sigma)\Big( \widetilde{\exp}^*_{L_n,V(\chi_{\LT}^{-j}),\id}(\sigma^{-1} (y_{\chi_{\LT}^{-j},n}))  \otimes e_{ j}\Big) , & \text{if $n>0$.}
  \end{cases}
\end{align*}\normalsize
\item If $j\leq -1,$ we have\footnotesize
\begin{align*}
 \Omega^{j} \mathbf{L}_V^{\Gamma_L}(y)(\rho\chi_{\LT}^j)&= \frac{{ (-1)^{j+1}}}{(-1-j)!}
\begin{cases}
 \displaystyle    (1-\pi_L^{-1-j}\varphi_L^{-1})^{-1}(1-\frac{\pi_L^{j+1}}{q}\varphi_L)\Big(\widetilde{\log}_{L,V(\chi_{\LT}^{-j}),\id}(y_{\chi_{\LT}^{-j}})\otimes e_{ j}\Big), & \text{if  $n=0$,} \\
 %   {[L_n:L]  }
 \displaystyle     \pi_L^{n(1+j)}\tau(\rho)^{-1} \varphi_L^{n}  \sum_{\sigma \in G_n}\rho(\sigma)\Big(\widetilde{\log}_{L_n,V(\chi_{\LT}^{-j}),\id}(\sigma^{-1}(y_{\chi_{\LT}^{-j},n}))  \otimes e_{ j}\Big),  & \text{if $n>0$.}
\end{cases}
\end{align*}
\end{enumerate}\normalsize
In both cases, we assume that $1-\pi_L^{-1-j}\varphi_L^{-1}, 1-\frac{\pi_L^{j+1}}{q}\varphi_L $ are invertible on $D_{\cris,L}(V(\tau^{-1}))$ when $\rho$ is the trivial character, i.e., $n=0.$ In addition,  we have
\[\sum_{\sigma \in G_n}\rho(\sigma)\Big( \widetilde{\exp}^*_{L_n,V(\chi_{\LT}^{-j}),\id}(\sigma^{-1} (y_{\chi_{\LT}^{-j},n}))  \otimes e_{ j}\Big)=\widetilde{\exp}^*_{L,V(\chi_{\LT}^{-j}\rho^{-1}),\id}( y_{\chi_{\LT}^{-j}\rho^{-1}})  \otimes e_{ j}\]
and
\[\sum_{\sigma \in G_n}\rho(\sigma)\Big(\widetilde{\log}_{L_n,V(\chi_{\LT}^{-j}),\id}(\sigma^{-1}(y_{\chi_{\LT}^{-j},n}))  \otimes e_{ j}\Big)=\widetilde{\log}_{L,V(\chi_{\LT}^{-j}\rho^{-1}),\id}(y_{\chi_{\LT}^{-j}\rho^{-1}})  \otimes e_{ j}.\]
\end{theorem}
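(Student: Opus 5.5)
The approach is to deduce the formula by duality from an interpolation formula for a Lubin–Tate big exponential map, as in \cite{SanoV}; the hypotheses on $V^*(1)$ are exactly those needed to apply that big exponential map to $V^*(1)$.

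\textbf{Step 1 (adjunction).} I would first establish, or recall from \cite{SV24,SanoV}, a reciprocity law. It says that under the Iwasawa cohomology pairing
\[
H^1_{\Iw}(L_\infty/L,T)\times H^1_{\Iw}(L_\infty/L,T^*(1))\to \Lambda_{o_L}(\Gamma_L),
\]
extended to distributions, $\mathbf{L}_V^{\Gamma_L}$ is adjoint to the big exponential map $\Omega_{V^*(1),1}$ for $V^*(1)$. Concretely, I would compute the composite \eqref{f:defregulator}: the element $(1-\frac{\pi_L}{q}\varphi_L)\mathrm{Exp}^*(y)\in \mathcal O^{\psi_L=0}\otimes_L D_{\cris,L}(V(\tau^{-1}))$ is paired, via the residue pairing on the Robba ring, with the $(\varphi_L,\Gamma_L)$-module image of $\Omega_{V^*(1),1}(x)$ for $x\in D_{\cris,L}(V^*(1))$. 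The Mellin transform $\mathfrak M$ converts this residue pairing into the pairing of distributions. The hypotheses that $V^*(1)$ is $L$-analytic, that $\Fil^{-1}$ is everything, and that $\varphi_L$ has no eigenvalues $1$ or $\pi_L^{-1}$ ensure that $\Omega_{V^*(1),1}$ is defined without poles.

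\textbf{Step 2 (evaluation at characters).} Next, evaluating a distribution $\lambda$ at $\rho\chi_{\LT}^j$ has to be expressed through $\mathfrak M(\lambda)$. For $j\ge 0$ this amounts to applying $\partial_{\rm inv}^j$, up to the normalisation $\Omega^j$ coming from $\eta(1,Z)=\exp(\Omega\log_{\LT}(Z))$, and then evaluating at torsion points $\eta_n$ weighted by $\rho$; this produces the Gauss sum $\tau(\rho)^{-1}$, the factor $\pi_L^{n(1+j)}$ and the $\varphi_L^{-n}$-twist. For $j<0$ one instead uses the $j$-fold integration, giving the factor $(-1)^{j+1}/(-1-j)!$. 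Inserting the known interpolation formula for $\Omega_{V^*(1),1}$ at these characters (Berger's explicit formula in the Lubin–Tate setting, as in \cite{SV24}) gives Bloch–Kato $\exp$ or $\log$ on the dual side. Dualising via the Tate pairing, these become $\widetilde{\exp}^*$ for $j\ge0$ and $\widetilde{\log}$ for $j\le -1$ on the $V$ side. The twist $\mathbb T_{\tau^{-1}}$ and the element $e_j$ keep track of the Hodge–Tate twists.

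\textbf{Step 3 (the final identities).} These follow from formal properties. Twisting by $\rho^{-1}$ is trivial over $L_n$, and corestriction from $L_n$ to $L$ of $\mathrm{res}(y_{\chi_{\LT}^{-j}\rho^{-1}})$ equals $\sum_\sigma\rho(\sigma)\sigma^{-1}y_{\chi_{\LT}^{-j},n}$ after identifying coefficients. Moreover, $\exp^*$ and $\log$ commute with corestriction and Galois conjugation, and on the de Rham side corestriction becomes the trace.

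\textbf{Main obstacle.} I expect the hardest part to be Step 2 for $n=0$: tracking the exact Euler factors $(1-\pi_L^{-1-j}\varphi_L^{-1})^{-1}(1-\frac{\pi_L^{j+1}}{q}\varphi_L)$ through the duality, including the adjoint of $\varphi_L$ under the crystalline pairing and the factor $q/\pi_L$ hidden in $\psi_L$. For this I would first do the rank-one case $V=L(\tau)$ with explicit Coleman power series, and use it to calibrate the signs and normalisations.
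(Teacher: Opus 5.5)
Your plan is essentially the argument behind the statement. The paper does not reprove it but quotes \cite[Cor.\ 2.6.5]{SanoV}, where, as Remark~\ref{rem:CC} indicates, it is obtained by duality, through the reciprocity law of \cite{SV24}, from the interpolation formula for the big exponential map $\Omega_{V^*(1),1}$; this is why all hypotheses are placed on $V^*(1)$.

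When you write it out, fix three bookkeeping slips:
\begin{enumerate}
\item The factorials $j!$ and $1/(-1-j)!$ come from the interpolation formula for $\Omega_{V^*(1),1}$ itself, not from evaluating the Mellin transform.
\item On the $V^*(1)$-side that formula gives $\exp$ for $j\geq 0$ and a formula for $\exp^*$ of the specialisation for $j\leq -1$. It is these that dualise to $\widetilde{\exp}^*$ and $\widetilde{\log}$; a $\log$ on the dual side would not dualise to $\log$.
\item In Step 3, the $\rho$-weighted sum comes from
\[\mathrm{res}_{L_n/L}(y_{\chi_{\LT}^{-j}\rho^{-1}})=\sum_{\sigma\in G_n}\rho(\sigma)\sigma^{-1}(y_{\chi_{\LT}^{-j},n})\otimes\eta_{\rho^{-1}},\]
or equivalently from $\exp^*\circ\mathrm{cor}=\mathrm{Tr}\circ\exp^*$. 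It does not come from $\mathrm{cor}\circ\mathrm{res}$, which is just multiplication by $[L_n:L]$.
\end{enumerate}
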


\subsection{Definition of Kupferer's regulator map}\label{sec:regKu}

Let $F$ be a finite unramified extension of $L$ and let $\mathfrak{F}_\infty/F$ \footnote{Kupferer assumes that $\Upsilon\cong\zp$, but one easily checks  that the whole construction works in this greater generally, similar as in \cite{LZ}. }  be an unramified $p$-adic Lie extension with Galois group $\Upsilon,$ thus $\Upsilon$ is either a finite cyclic group, or the product of such group with $\zp.$  We assemble the whole situation in the following diagram:%\todo{ ergaenze $L^{ab}$ }
	 \begin{equation}\label{f:unframifieddiagram}
	     \begin{xy} \xymatrix@R-1pc{
	 			& FL^{ab} & \\
				&	X_\infty=\mathfrak{F}_\infty\Lin	\ar@{-}[u]&		\\
&&\\
			\mathfrak{F}_\infty \ar@{-}[uur]_-{\GaL} &		&		\\	
			  &	F\Lin \ar@{-}[uuu]_\Upsilon	&		\\
			F'	\ar@{-}[uu]&		& &	\phantom{F}\Lin \ar@{-}[uuuull] \ar@{-}[ull]	\\
			F \ar@/^0.7cm/@{-}[uuu]^-{\Upsilon}  	\ar@{-}[u]\ar@{-}[uuuuur]^{G}\ar@{-}[uur]_-{\GaL} &	&	\\
				&		& L \ar@{-}[uur]_-{\GaL} \ar@{-}[ull]
	 } \end{xy} \end{equation}
%\ar@/_0.7cm/@{-}[uuuur]_-{G}
%\ar@/_0.7cm/@{-}[uuuu]_-{\GaL}
%\ar@{-}[uur]\ar@/^0.4cm/@{-}[ur]^-{\GaL}
In this setting the Yager module $S_{\mathfrak{F}_\infty/F}$ is defined as follows: For any finite intermediate extension $F\subseteq F'\subseteq \mathfrak{F}_\infty$, we define
$$S_{F'/F}:=\{y \in o_{F'}[\mathrm{Gal}(F'/F)] \mid y^g=[g]y  \text{ for all $g\in \mathrm{Gal}(F'/F)$} \},$$
where $y^g$ denotes the action on the coefficients $o_F$ while $[g]$ denotes the element $g$ considered as element of the group ring. %As in \cite[Prop.\ 3.2, (4)]{LZ} or \cite[Prop.\ 6.4.18]{Ku}\footnote{Kupferer believes that he has to
%	assume $p \neq 2$ for the existence of an integral normal basis generator for finite and unramified Galois
%	extensions over $L$ (i.e. an integral element whose powers are a
%	normal basis). Following Loeffler and Zerbes he refers to  \cite[Thm.\ 4.12]{P} to ensure the existence. But the proof there just regarding the existence works for arbitrary $p$, only the proof of the additional self-duality property requires $p$ to be odd as it is based on \cite[Prop.\ 4.9]{P}. }
One obtains an isomorphism of free, rank one $\Lambda_{o_F}(\Upsilon)$-modules
\begin{equation}\label{f:Yager}
  \varprojlim_{F'} o_{F'} \cong S_{\mathfrak{F}_\infty/F}:=\varprojlim_{F'}S_{F'/F},
\end{equation}
 see Lemma \ref{lem:Ups-cohomology} below.
As in  \cite[Prop.\ 3.6]{LZ} we shall also prove  that
\begin{equation}\label{f:Sinftyproperty}
 S_{\mathfrak{F}_\infty/F}=\{\lambda\in\Lambda_{\widehat{o_{\mathfrak{F}_\infty}}}(\Upsilon) \mid \lambda^u=[u]\lambda\}
\end{equation}
for any topological generator $u$ of $\Upsilon,$ where the first action is just on the coefficients and $[u]$ denotes the element $u$ considered as an element of the Iwasawa algebra. Indeed, define $\mathbb{T}_\Upsilon:=\Lambda_{o_L}(\Upsilon)$ as $\Lambda_{o_L}(\Upsilon)$-module and endow with $G_F$-action via the character $\kappa_\Upsilon:G_F\twoheadrightarrow \Upsilon \to \Lambda_{o_L}(\Upsilon)$ induced by sending $u\in\Upsilon$ to its class $[u^{-1}]$ in the Iwasawa algebra. Then, we endow the completed tensor product $\widehat{o_{\mathfrak{F}_\infty}} \hat{\otimes}_{o_L}\mathbb{T}_\Upsilon$ with the diagonal action. Analogous actions we introduce on the quotients $\widehat{o_{\mathfrak{F}_\infty}} \hat{\otimes}_{o_L}o_{L}[\mathrm{Gal}(F'/F)]=\widehat{o_{\mathfrak{F}_\infty}}  \otimes_{o_L}o_{L}[\mathrm{Gal}(F'/F)].$

\begin{lemma} \label{lem:Ups-cohomology}
Assume that the residue field $\kappa_{\mathfrak{F}_\infty}$ is $p$-closed, i.e., that $\Upsilon$ is infinite.
\begin{enumerate}
  \item  Let $\kappa_F\subseteq \kappa'\subseteq \kappa_{\mathfrak{F}_\infty}$ be a finite intermediate extension, $\varphi_{q'}$ its corresponding arithmetic Frobenius automorphism and also the Frobenius on $F':=W(\kappa')_L[\frac{1}{p}]=W(\kappa')\otimes_{W(\kappa_L)}L$. Then  we have an exact sequence
  \begin{equation}\label{f:Frob}
     \xymatrix@C=0.5cm{
      0 \ar[r] & o_{F'}  \ar[rr]^{} && \widehat{o_{\mathfrak{F}_\infty}} \ar[rr]^{1-\varphi_{q'}} && \widehat{o_{\mathfrak{F}_\infty}} \ar[r] & 0 }
  \end{equation}
  \item  We have canonical isomorphism of $\Lambda_{o_L}(\Upsilon)$-modules
  \begin{align*}
  H^0(\Upsilon,\widehat{o_{\mathfrak{F}_\infty}} \hat{\otimes}_{o_L}o_{L}[\mathrm{Gal}(F'/F)]) & \cong S_{{F}'/F},  \\
     H^0(\Upsilon,\widehat{o_{\mathfrak{F}_\infty}} \hat{\otimes}_{o_L}\mathbb{T}_\Upsilon) & \cong S_{\mathfrak{F}_\infty/F} \cong\Lambda_{o_F}(\Upsilon),  \\
      H^1(\Upsilon,\widehat{o_{\mathfrak{F}_\infty}} \hat{\otimes}_{o_L}\mathbb{T}_\Upsilon)& =0,\\
      \widehat{o_{\mathfrak{F}_\infty}} \hat{\otimes}_{o_F}S_{\mathfrak{F}_\infty/F} &=\widehat{o_{\mathfrak{F}_\infty}} \hat{\otimes}_{o_L}\mathbb{T}_\Upsilon.
  \end{align*}

\end{enumerate}
\end{lemma}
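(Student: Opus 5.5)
For (1) I would reduce modulo $\pi_L$ and pass to the limit. Since $\mathfrak{F}_\infty/F$ is unramified, $\widehat{o_{\mathfrak{F}_\infty}}=W(\kappa_{\mathfrak{F}_\infty})\otimes_{W(\kappa_L)}o_L$ and $\varphi_{q'}$ acts through the first factor. Modulo $\pi_L$ the sequence becomes
\[0\to\kappa'\to\kappa_{\mathfrak{F}_\infty}\xrightarrow{\,x\mapsto x-x^{q'}\,}\kappa_{\mathfrak{F}_\infty}\to 0 .\]
Exactness on the left and in the middle holds because the fixed points of $x\mapsto x^{q'}$ form exactly $\kappa'$. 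Surjectivity on the right holds because every Artin--Schreier type equation $x^{q'}-x=a$ has a solution in a degree-$p$-power extension of $\kappa'(a)$, and $\kappa_{\mathfrak{F}_\infty}$ is $p$-closed (this is the role of $\Upsilon$ infinite, containing $\zp$). Since all terms are $\pi_L$-torsion free and $\pi_L$-adically complete, a standard successive approximation (equivalently, the snake lemma applied to multiplication by $\pi_L^k$ followed by $\varprojlim_k$, with Mittag-Leffler holding because the kernels are finite levels of $o_{F'}$) gives exactness integrally. Left exactness is clear, because $\widehat{o_{\mathfrak{F}_\infty}}^{\varphi_{q'}=1}=o_{F'}$.

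For (2), first isomorphism: write $H'=\mathrm{Gal}(\mathfrak{F}_\infty/F')$. Invariants under $H'$ are taken only in the first factor, since $H'$ acts trivially on $o_L[\mathrm{Gal}(F'/F)]$, so they equal $o_{F'}\otimes o_L[\mathrm{Gal}(F'/F)]=o_{F'}[\mathrm{Gal}(F'/F)]$ by (1). The remaining invariants under $\mathrm{Gal}(F'/F)$ for the diagonal action with $\kappa$ are exactly the condition $y^g=[g]y$, so $H^0=S_{F'/F}$. Second isomorphism: $\widehat{o_{\mathfrak{F}_\infty}}\hat\otimes\mathbb{T}_\Upsilon=\varprojlim_{F'}\widehat{o_{\mathfrak{F}_\infty}}\otimes o_L[\mathrm{Gal}(F'/F)]$, and $H^0$ commutes with inverse limits, giving $S_{\mathfrak{F}_\infty/F}$. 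To identify $S_{F'/F}\cong o_{F'}$ compatibly, I would use $o_{F'}\to S_{F'/F}$, $a\mapsto\sum_g g(a)[g]$ (with the inverse convention as dictated by $\kappa_\Upsilon$), which is a bijection with inverse given by the coefficient of $[1]$. Here $\varprojlim o_{F'}$ is taken along traces, which matches the projections of group rings. Freeness of rank one over $\Lambda_{o_F}(\Upsilon)$ follows from the existence of a compatible system of integral normal basis generators: each $o_{F'}$ is free over $o_F[\mathrm{Gal}(F'/F)]$ because $F'/F$ is unramified, and compactness gives a compatible choice. Combining these gives $\cong\Lambda_{o_F}(\Upsilon)$.

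For $H^1$, I would use that $\Upsilon$ is procyclic with generator $u$, so $H^1$ at each finite level is the cokernel of $1-u$. I would show $1-u$ is surjective on each $\widehat{o_{\mathfrak{F}_\infty}}\otimes o_L[\mathrm{Gal}(F'/F)]$. After choosing the basis $[g]$, this amounts to solving $(1-\varphi_{q'})$-type equations on $\widehat{o_{\mathfrak{F}_\infty}}^{\oplus}$: the $n$-fold iterate of $u$ on the coordinates reduces to $\varphi^{n}$, which is handled by (1). Passing to the limit requires the vanishing of $\varprojlim^1$ of the finite-level $H^0$'s, which holds since $S_{F'/F}$ are compact. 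This gives vanishing of continuous $H^1$ for the limit. The last equality follows by extending scalars: the natural map $\widehat{o_{\mathfrak{F}_\infty}}\hat\otimes_{o_F}S\to\widehat{o_{\mathfrak{F}_\infty}}\hat\otimes\mathbb{T}_\Upsilon$ is checked at each finite level, where it is the classical statement that $\widehat{o_{\mathfrak{F}_\infty}}\otimes_{o_F}o_{F'}\cong\prod_g\widehat{o_{\mathfrak{F}_\infty}}$ (Galois descent for the étale extension $o_{F'}/o_F$).

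I expect the main obstacle to be the $H^1$ vanishing together with the limit bookkeeping. The conventions for $\kappa_\Upsilon$ (sending $u$ to $[u^{-1}]$) need care so that the invariant conditions match $y^g=[g]y$. I would first settle this at a finite level, reducing to (1) by an explicit coordinate computation, before taking limits.
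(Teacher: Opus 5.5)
Your proposal is correct. For (1), the first two isomorphisms in (2) and the identification with $\Lambda_{o_F}(\Upsilon)$, it is essentially the paper's argument: the Artin--Schreier sequence mod $\pi_L$ plus d\'evissage, then taking $\mathrm{Gal}(\mathfrak{F}_\infty/F')$-invariants first, then compatible integral normal basis generators (which the paper only cites). Your map should read $a\mapsto\sum_g g^{-1}(a)[g]$, which is exactly what the condition $y^g=[g]y$ forces.

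You genuinely differ from the paper in the proof that $H^1$ vanishes, and mildly in the last equality.

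\textbf{The paper's route for $H^1$.}
\begin{itemize}
\item It passes to the pro-$p$ part $\Upsilon'\cong\mathbb{Z}_p$ and writes $\Lambda_{o_L}(\Upsilon')=o_L[[T]]$ with $T=[\varphi_{q'}]-1$.
\item It proves that $\varphi_{q'}\otimes\id-\id\otimes[\varphi_{q'}]$ is surjective on the whole completed module, by solving the infinite triangular system $c_i+a_{i-1}=(\varphi_{q'}-1)(a_i)$ with (1).
\item It climbs back to $\Upsilon$ by a Hochschild--Serre argument, using that the $\Upsilon'$-invariants form an induced $\Upsilon/\Upsilon'$-module.
\end{itemize}

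\textbf{Your route for $H^1$.}
\begin{itemize}
\item You stay at the finite levels $\widehat{o_{\mathfrak{F}_\infty}}\otimes_{o_L}o_L[\mathrm{Gal}(F'/F)]$.
\item There cyclicity collapses $(1-u)x=c$ to the single equation $(1-\varphi^n)(a_0)=\sum_j\varphi^j(c_j)$, which (1) solves.
\item You then pass to the limit with the $\varprojlim^1$-sequence for continuous cohomology, using $\varprojlim^1 S_{F'/F}=0$.
\end{itemize}

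\textbf{What each buys.} Your route is more elementary and treats $\Upsilon$ uniformly, without splitting off the prime-to-$p$ part. The paper's route gives surjectivity of $1-u$ directly on the completed module, together with an explicit description of the $\Upsilon'$-invariants, so it needs no limit argument at that stage.

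\textbf{Two details to make explicit.}
\begin{itemize}
\item At finite level you only need injectivity of $c\mapsto c(u)$ from $H^1$ into $M/(1-u)M$. This is immediate from continuity of cocycles and density of $u^{\mathbb{Z}}$ in the procyclic group $\Upsilon$.
\item For the limit sequence you need the transition maps on continuous cochains to be surjective. This holds because $\widehat{o_{\mathfrak{F}_\infty}}[\mathrm{Gal}(F''/F)]\to\widehat{o_{\mathfrak{F}_\infty}}[\mathrm{Gal}(F'/F)]$ admits a continuous (non-equivariant) section.
\end{itemize}

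Finally, your explicit Galois descent $\widehat{o_{\mathfrak{F}_\infty}}\otimes_{o_F}o_{F'}\cong\prod_g\widehat{o_{\mathfrak{F}_\infty}}$ for the last equality is a concrete instance of the Serre lemma on invariant bases that the paper invokes.
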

\begin{proof}
 Note that $\widehat{o_{\mathfrak{F}_\infty}}=W(\kappa_{\mathfrak{F}_\infty})_L$ and that we have due to the $p$-closedness an Artin-Schreier type exact sequence of residue fields
 \[\xymatrix@C=0.5cm{
   0 \ar[r] & \kappa' \ar[rr]^{ } &&  \kappa_{\mathfrak{F}_\infty}\ar[rr]^{1-\varphi_{q'}} && \kappa_{\mathfrak{F}_\infty} \ar[r] & 0 }\]
 which lifts by devisage to (ramified) Witt vectors of finite length and then to the claimed statement of (i) by taking projective limits. Right exactness follows as the $R^1\varprojlim$ of an inverse limit of finite modules vanishes.

 In order to show (ii)  we note that for any open subgroup $U\subseteq\Upsilon'$ with $\Upsilon'$ - the Galois group of $\mathfrak{F}_\infty/F'$  - being the maximal pro-$p$-subgroup of $\Upsilon$, i.e., $\Upsilon'\cong \overline{<\varphi_{q'}>}\cong\mathbb{Z}_p,$ we have an exact sequence
\begin{equation*}
  \xymatrix@C=0.5cm{
    0 \ar[r] & o_{\mathfrak{F}_\infty^U}[\Upsilon/\Upsilon']  \ar[rr]^-{\iota} && \widehat{o_{\mathfrak{F}_\infty}}\hat{\otimes}_{o_L}\Lambda_{o_L}(\Upsilon/U) \ar[rrr]^{\varphi_{q'}\otimes\id-\id\otimes [\varphi_{q'}]} && & \widehat{o_{\mathfrak{F}_\infty}}\hat{\otimes}_{o_L}\Lambda_{o_L}(\Upsilon/U), }
\end{equation*}
where the map $\iota$ sends $\sum_{\tau\in \Upsilon/\Upsilon'}  o_\tau \tau$ to $\sum_{\tau\in \Upsilon/\Upsilon'} \left(\sum_{\sigma\in\Upsilon'/U} \sigma^{-1}(o_\tau)\otimes\sigma\right) \tau. $ This is easily checked by writing elements of $\widehat{o_{\mathfrak{F}_\infty}}\hat{\otimes}_{o_L}\Lambda_{o_L}(\Upsilon/U) $ uniquely as $\sum_{\tau\in \Upsilon/\Upsilon'} \sum_{i=0}^{d-1} a_{\tau,i}\otimes [\varphi_{q'}^i]\tau$ and evaluating the conditions of being in the kernel
%and for being in the image, respectively.
in conjunction with \eqref{f:Frob} (see the proof of \cite[Prop.\ 2.1]{V-Kato} for a similar argument for the kernel). Taking inverse limits over $U$ leads to an exact sequence
\begin{equation*}
  \xymatrix@C=0.5cm{
    0 \ar[r] & \varprojlim_U o_{\mathfrak{F}_\infty^U}[\Upsilon/\Upsilon']  \ar[rr]^-{\iota} && \widehat{o_{\mathfrak{F}_\infty}}\hat{\otimes}_{o_L}\Lambda_{o_L}(\Upsilon) \ar[rrr]^{\varphi_{q'}\otimes\id-\id\otimes [\varphi_{q'}]} && & \widehat{o_{\mathfrak{F}_\infty}}\hat{\otimes}_{o_L}\Lambda_{o_L}(\Upsilon) . }
\end{equation*}
%because the first (non-trivial) inverse system has surjective transitions maps (induced by traces of unramified extensions of local fields), whence   $R^1\varprojlim_U o_{\mathfrak{F}_\infty^U}[\Upsilon/\Upsilon']  $ vanishes.
For the surjectivity use that $\Lambda_{o_L}(\Upsilon)=\bigoplus_{\Upsilon/\Upsilon'}\Lambda_{o_L}(\Upsilon')$ to reduce to the case $\Upsilon'=\Upsilon.$ Setting $T=[\varphi_{q'}]-1$ we identify $\Lambda_{o_L}(\Upsilon')$ with $o_L[[T]]$. Then any element $x$ in $ \widehat{o_{\mathfrak{F}_\infty}}\hat{\otimes}_{o_L}\Lambda_{o_L}(\Upsilon') $ can be uniquely written as $\sum_{i\geq 0}  a_i T^i$ with $a_i\in \widehat{o_{\mathfrak{F}_\infty}}$. The calculation
\begin{align*}
\left( \varphi_{q'}\otimes\id-\id\otimes [\varphi_{q'}]\right)(x) & =  \sum_{i\geq 0} \varphi_{q'}( a_i) T^i - \sum_{i\geq 0}  a_i (T+1)T^i\\
    & =  \sum_{i\geq 0}\left( \varphi_{q'}( a_i)-a_i -b_i\right) T^i
   \end{align*}
with $b_0=0$ and $b_i=a_{i-1}$ for $i\geq 1$ shows that $\sum_{i\geq 0} c_i T^i$ is in the image, if and only if the following equations have solutions
\begin{align*}
  c_0 & = (\varphi_{q'}-1)(a_0)\\
 c_1+a_0 & = (\varphi_{q'}-1)(a_1)\\
 \ldots\\
  c_i+a_{i-1} & =(\varphi_{q'}-1)(a_i)
\end{align*}
for all $i\geq 0.$ This is always granted by \eqref{f:Frob}.

Since $[\varphi_{q'}]$ is invertible one sees immediately that   the following sequence is exact
\begin{equation*}
  \xymatrix@C=0.5cm{
    0 \ar[r] & \varprojlim_U o_{\mathfrak{F}_\infty^U}[\Upsilon/\Upsilon']  \ar[rr]^-{\iota} && \widehat{o_{\mathfrak{F}_\infty}}\hat{\otimes}_{o_L}\mathbb{T}_\Upsilon \ar[rrr]^{\varphi_{q'}\otimes\varphi_{q'}-\id\otimes \id} && & \widehat{o_{\mathfrak{F}_\infty}}\hat{\otimes}_{o_L}\mathbb{T}_\Upsilon \ar[r] & 0. }
\end{equation*}
Hence, we obtain that $H^0(\Upsilon',\widehat{o_{\mathfrak{F}_\infty}}\hat{\otimes}_{o_L}\mathbb{T}_\Upsilon)\cong \left(\varprojlim_U o_{\mathfrak{F}_\infty^U}\right)[\Upsilon/\Upsilon']$ and $H^1(\Upsilon',\widehat{o_{\mathfrak{F}_\infty}}\hat{\otimes}_{o_L}\mathbb{T}_\Upsilon)=0.$ Taking $\Upsilon/\Upsilon'$-invariants gives
\[H^0(\Upsilon,\widehat{o_{\mathfrak{F}_\infty}}\hat{\otimes}_{o_L}\mathbb{T}_\Upsilon )=\varprojlim_U o_{\mathfrak{F}_\infty^U}\] (by definition this agrees with $S_{\mathfrak{F}_\infty/F}  $ and a similar proof as for \cite[Prop.\ 2.1]{V-Kato} shows that this module is free of rank one over $\Lambda_{o_F}(\Upsilon)$, compare also with \cite[Prop.\ 3.2, (4)]{LZ} or \cite[Prop.\ 6.4.18]{Ku}\footnote{Kupferer believes that he has to
	assume $p \neq 2$ for the existence of an integral normal basis generator for finite and unramified Galois
	extensions over $L$ (i.e. an integral element whose powers are a
	normal basis). Following Loeffler and Zerbes he refers to  \cite[Thm.\ 4.12]{P} to ensure the existence. But the proof there just regarding the existence works for arbitrary $p$, only the proof of the additional self-duality property requires $p$ to be odd as it is based on \cite[Prop.\ 4.9]{P}. } and \cite[Lem.\ 3.3]{ochiai}). Since
\[H^1(\Upsilon/\Upsilon',H^0(\Upsilon',\widehat{o_{\mathfrak{F}_\infty}}\hat{\otimes}_{o_L}\mathbb{T}_\Upsilon))=H^1(\Upsilon/\Upsilon', \left(\varprojlim_U o_{\mathfrak{F}_\infty^U}\right)[\Upsilon/\Upsilon'] )=0,\]
we conclude the vanishing of $H^1(\Upsilon,\widehat{o_{\mathfrak{F}_\infty}} \hat{\otimes}_{o_L}\mathbb{T}_\Upsilon) $ by using a similar Hochschild-Serre type sequence as in \eqref{f:HS}.

For the first isomorphism in (ii) we note that the open subgroup $U=\mathrm{Gal}(\mathfrak{F}_\infty/F')$ acts trivially on the right factor, whence the $U$-invariants belong to $o_{F'}=\widehat{o_{\mathfrak{F}_\infty}}^{U}$ already and the full $\Upsilon$-invariants reproduce precisely the definition of $S_{{F}'/F}$ (Taking the inverse limit therefore provides another proof of the second isomorphism in (ii)). Finally, the last isomorphism is an immediate consequence of \cite[\S A1, LEMMA, p.\ III-33]{Se0} which implies the existence of a basis of $\widehat{o_{\mathfrak{F}_\infty}}  \otimes_{o_L}o_{L}[\mathrm{Gal}(F'/F)]$  as $\widehat{o_{\mathfrak{F}_\infty}} $-module consisting of $ \Upsilon$-invariant elements. This implies the analogous claim for $S_{F'/F}.$ Taking inverse limits gives the desired result:
\begin{align*}
  \widehat{o_{\mathfrak{F}_\infty}} \hat{\otimes}_{o_F}S_{\mathfrak{F}_\infty/F}   & \cong \varprojlim_{F'} (\widehat{o_{\mathfrak{F}_\infty}} {\otimes}_{o_F}S_{{F}'/F} ) \\
    & \cong  \varprojlim_{F'} (\widehat{o_{\mathfrak{F}_\infty}}  \otimes_{o_L}o_{L}[\mathrm{Gal}(F'/F)] )  \\
    & \cong \widehat{o_{\mathfrak{F}_\infty}} \hat{\otimes}_{o_L}\mathbb{T}_\Upsilon.
\end{align*}
\end{proof}

For $T\in \Rep_{o_L,f}^{\cris,\mathrm{an}}(G_F)$ a {\em relative Kisin-Ren- or Wach-module} $\mathcal{N}_{F'/F}(T)\cong \mathcal{N}_{F'/F}(T)\otimes_{o_F} o_{F'} $ (compatible with $\varphi$-actions, where on the right hand side it is given as $\varphi \otimes \sigma_q$ with the arithmetic $q$-Frobenius $\sigma_q$)\footnote{Prop.\ 6.5.1/Def. 6.5.2 in \cite{Ku} extends immediately to this slightly more general setting.}  as well as $\mathcal{N}_{\mathfrak{F}_\infty/F}(T):=\varprojlim_{F'} \mathcal{N}_{F'/F}(T)$ is introduced satisfying the following properties (see \cite[Prop.\ 6.5.10, Lem.\ 6.6.1, Lem.\ 6.6.2, Lem.\ 6.6.4]{Ku} and \cite[Prop.\ 3.13]{LZ}):
\begin{lemma}\label{lem:KR-modules} For the relative Kisin-Ren-Wach modules the following holds:
\begin{enumerate}
  \item  There is an isomorphism of topological modules $\mathcal{N}_{\mathfrak{F}_\infty/F}(T)\cong \mathcal{N}_{F/F}(T)\hten_{o_F} S_{\mathfrak{F}_\infty/F}$ using \eqref{f:Yager}.
  \item  There is an isomorphism of $\Lambda_{o_F}(G)$-modules $H^1_{\mathrm{Iw}}(X_\infty/F,T')\cong \mathcal{N}_{\mathfrak{F}_\infty/F}(T'(\tau^{-1}))^{\psi=1}$ for $T'$ as in Theorem \ref{thm:Kupferer}.
  \item There is a homomorphism\footnote{From the definition of $\varphi^*\mathcal{N}_{F'/F}$  in \cite[6.5.5]{Ku} one sees immediately, that $\varphi^*\mathcal{N}_{F'/F}\cong (\varphi^*\mathcal{N}_{F/F})\otimes_{o_F} o_{F'},$ whence $(\varphi^* \mathcal{N}_{\mathfrak{F}_\infty/F}(T))^{\psi=0}\cong \varprojlim_{F'} (\varphi^* \mathcal{N}_{{F}/F}(T))^{\psi=0} \otimes_{o_F} o_{F'}\cong (\varphi^* \mathcal{N}_{F/F}(T))^{\psi=0}\hten_{o_F} S_{\mathfrak{F}_\infty/F}$ since $\psi$ acts bijectively on $o_{F'}.$}%\todo{Tensorprodukte \"{u}ber den richtigen Ringen?}
      \[(\varphi^* \mathcal{N}_{\mathfrak{F}_\infty/F}(T))^{\psi=0}
  \to (\varphi^* \mathcal{N}_{F/F}(T))^{\psi=0}\hten_{o_F} S_{\mathfrak{F}_\infty/F}
  \to (\mathcal{O}_{K}(\mathbf{B}))^{\psi=0}\hten_{o_F}S_{\mathfrak{F}_\infty/F} \otimes_{o_F} D_{\cris,F}(V),
  \]
 % \item $(\mathcal{O}_{K}(\mathbf{B})\hten_{o_F}S_{\mathfrak{F}_\infty/F})^{\psi=0}\cong (\mathcal{O}_{K}(\mathbf{B}))^{\psi=0}\hten_{o_F}S_{\mathfrak{F}_\infty/F}$,
  \item We have an injective homomorphism   assuming $\mathfrak{F}_\infty\subseteq K$
  \[(\mathcal{O}_{K}(\mathbf{B}))^{\psi=0}\hten_{o_F}S_{\mathfrak{F}_\infty/F}\hookrightarrow D_L(\Gamma_L,K)\hten_{\widehat{{\mathfrak{F}_\infty}}} D_{\Qp}(\Upsilon,\widehat{\mathfrak{F}_\infty})\]
  induced from the natural inclusions $ \widehat{o_{\mathfrak{F}_\infty}} \hat{\otimes}_{o_L}S_{\mathfrak{F}_\infty/F}\subseteq \Lambda_{\widehat{o_{\mathfrak{F}_\infty}}}(\Upsilon)\subseteq D_{\Qp}(\Upsilon,\widehat{\mathfrak{F}_\infty})$ in Lemma \ref{lem:Ups-cohomology} (ii) and from \eqref{f:Mellintransform}.
  \item If $M$ denotes a finite free $o_L$-module with continuous action of $\Upsilon$ via a homomorphism $\Lambda_{o_L}(\Upsilon)\to \mathrm{End}_{o_L}(M),$ then $ \mathcal{N}_{ F/F}(M)=\mathbb{A}_F^+\otimes_{o_F}\left(\widehat{o_{\mathfrak{F}_\infty}}\otimes_{o_L}M\right)^\Upsilon.$
\end{enumerate}
\end{lemma}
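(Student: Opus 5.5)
We will prove Lemma \ref{lem:KR-modules} item by item, following \cite[\S 6.5--6.6]{Ku} and \cite[\S 3]{LZ}. The approach is to reduce each statement about $\mathcal{N}_{\mathfrak{F}_\infty/F}(T)$ to the corresponding statement at a finite layer $F'$, and then pass to the inverse limit using Lemma \ref{lem:Ups-cohomology}. The basic input is the description $\mathcal{N}_{F'/F}(T)\cong \mathcal{N}_{F/F}(T)\otimes_{o_F} o_{F'}$, with $\varphi$ acting as $\varphi\otimes\sigma_q$. Here $\mathcal{N}_{F'/F}(T)$ is the Kisin--Ren module of the induced representation $\mathrm{Ind}_{F'}^{F}T$, so $\mathcal{N}_{F'/F}(T)=\mathcal{N}_{F/F}(o_F[\mathrm{Gal}(F'/F)]\otimes T)$. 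Since the twist $o_{L}[\mathrm{Gal}(F'/F)]$ is unramified, item (v) applies to it.

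\textbf{Step 1 (item (v)).} Let $M$ be unramified. Its period lies in $\widehat{o_{\mathfrak{F}_\infty}}$, so $D:=(\widehat{o_{\mathfrak{F}_\infty}}\otimes_{o_L}M)^\Upsilon$ is an $o_F$-lattice with Frobenius. By the trivialization lemma \cite[\S A1]{Se0} already used in the proof of Lemma \ref{lem:Ups-cohomology}, the natural map $\widehat{o_{\mathfrak{F}_\infty}}\otimes_{o_F}D\to \widehat{o_{\mathfrak{F}_\infty}}\otimes_{o_L}M$ is an isomorphism. Consequently $\mathbb{A}_F^+\otimes_{o_F}D$ satisfies the characterizing properties of the Wach module: it is stable under $\varphi$ and $\Gamma_L$, $\Gamma_L$ acts trivially modulo $Z$, and the base change to $\mathbb{A}$ is the $(\varphi,\Gamma)$-module of $M$. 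Uniqueness of Kisin--Ren modules then gives (v).

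\textbf{Step 2 (item (i)).} Apply (v) with $M=o_L[\mathrm{Gal}(F'/F)]$ and combine it with the tensor compatibility of Wach modules. This gives
\[\mathcal{N}_{F'/F}(T)\cong\mathcal{N}_{F/F}(T)\otimes_{o_F}(\widehat{o_{\mathfrak{F}_\infty}}\otimes_{o_L} o_L[\mathrm{Gal}(F'/F)])^\Upsilon=\mathcal{N}_{F/F}(T)\otimes_{o_F}S_{F'/F},\]
where the last equality is the first isomorphism of Lemma \ref{lem:Ups-cohomology}(ii). These isomorphisms are compatible with the trace and projection transition maps. Since the modules are finitely generated over compact rings, $\varprojlim$ commutes with $\otimes$, and we obtain (i) together with \eqref{f:Yager}.

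\textbf{Step 3 (items (ii) and (iii)).} By Shapiro's lemma,
\[H^1_{\mathrm{Iw}}(X_\infty/F,T')=\varprojlim_{F'}H^1_{\mathrm{Iw}}(F'L_\infty/F',T')=\varprojlim_{F'}H^1_{\mathrm{Iw}}(FL_\infty/F,\mathrm{Ind}T').\]
Applying the Kisin--Ren/Fontaine isomorphism $H^1_{\mathrm{Iw}}\cong D_{\LT}(\cdot(\tau^{-1}))^{\psi=1}=N(\cdot(\tau^{-1}))^{\psi=1}$ from \eqref{f:defregulator} at each layer and taking the limit gives (ii). The Wach-module equality $D_{\LT}^{\psi=1}=N^{\psi=1}$ needs the hypothesis that $T'(\tau^{-1})$ has no trivial quotient. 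This hypothesis is inherited by the induced module once $\Upsilon$ is infinite, because the $\Upsilon$-coinvariants of the limit vanish; this is where the assumption of Theorem \ref{thm:Kupferer} enters.

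For (iii), the footnote reduces the problem to the $F/F$ case tensored with $S_{\mathfrak{F}_\infty/F}$, using that $\psi$ acts bijectively on $o_{F'}$. It then suffices to compose with the standard embedding $\varphi^*N(V)\subseteq \mathcal{O}\otimes D_{\cris}(V)$ from \eqref{f:defregulator}.

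\textbf{Step 4 (item (iv)).} We tensor the Mellin isomorphism \eqref{f:Mellintransform} with the inclusion $\widehat{o_{\mathfrak{F}_\infty}}\hat\otimes S_{\mathfrak{F}_\infty/F}\subseteq \Lambda_{\widehat{o_{\mathfrak{F}_\infty}}}(\Upsilon)\subseteq D_{\Qp}(\Upsilon,\widehat{\mathfrak{F}_\infty})$ coming from Lemma \ref{lem:Ups-cohomology}(ii). The main obstacle is showing that this map stays injective after completed tensor products over different bases. Our first approach is to choose an $o_F$-basis of $S_{\mathfrak{F}_\infty/F}$ over $\Lambda_{o_F}(\Upsilon)$, which exists because the module is free of rank one. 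This identifies both sides with completed tensor products of $D(\Gamma_L,K)$ with $\Lambda(\Upsilon)$, respectively $D(\Upsilon)$, over Fréchet/compact-type bases. Injectivity then follows from flatness-type statements for completed tensor products of nuclear Fréchet spaces, as in \cite[\S 3]{LZ}.
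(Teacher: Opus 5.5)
Your arguments for items (i), (iii), (iv) and (v) agree with what the paper obtains by citing Kupferer and Loeffler--Zerbes. Your argument for (ii), however, has a genuine gap.

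You apply $H^1_{\Iw}\cong N(\,\cdot\,(\tau^{-1}))^{\psi=1}$ layer by layer to $\mathrm{Ind}_{F'}^{F}T'$. You justify this by asserting that the ``no trivial quotient'' hypothesis is inherited by the induced modules when $\Upsilon$ is infinite, because the $\Upsilon$-coinvariants of the limit vanish. Both claims are false.
\begin{itemize}
\item When $\mathfrak{F}_\infty/F$ is infinite, Theorem \ref{thm:Kupferer} imposes no condition at all, so $T'(\tau^{-1})=o_L$ is allowed.
\item In that case every $\mathrm{Ind}_{F'}^{F}T'(\tau^{-1})\cong o_L[\mathrm{Gal}(F'/F)]$ has the augmentation map as a trivial quotient. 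So the equality $D_{\LT}^{\psi=1}=N^{\psi=1}$ is unavailable at \emph{every} finite layer.
\item The limit module $\mathbb{T}_\Upsilon=\Lambda_{o_L}(\Upsilon)$ has $\Upsilon$-coinvariants $o_L\neq 0$, so the coinvariants do not vanish either.
\end{itemize}
Even in the finite case, Frobenius reciprocity shows your route needs $T'(\tau^{-1})|_{G_{\mathfrak{F}_\infty}}$ to have no trivial quotient. Knowing this only for $T'(\tau^{-1})$ as a $G_F$-representation is not enough.

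The missing ingredient is the limit argument of \cite[Prop.\ 4.5]{LZ}; the paper invokes its analogue in the footnote before \eqref{f:descentXtoL}. Proceed as follows.
\begin{itemize}
\item At each layer, use the unconditional isomorphism $H^1_{\Iw}(F'L_\infty/F',T')\cong D_{\LT}(\mathrm{Ind}_{F'}^{F}T'(\tau^{-1}))^{\psi=1}$.
\item Use the exact sequence $0\to \mathcal{N}_{F'/F}(T'(\tau^{-1}))^{\psi=1}\to D_{\LT}(\mathrm{Ind}_{F'}^{F}T'(\tau^{-1}))^{\psi=1}\to Q_{F'}$.
\item The obstruction $Q_{F'}$ comes only from the trivial part and stabilizes at a finite layer. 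Indeed, $\psi$ acts on $\mathcal{N}_{F/F}\otimes_{o_F}o_{F'}$ through $\sigma_q^{-1}$ on the second factor. Hence the extra $\psi$-fixed vectors have coefficients in a fixed finite level; for $T'(\tau^{-1})$ trivial, the coefficients lie in $o_{F'}^{\sigma_q=1}=o_L$.
\item Therefore the trace transition maps act on $Q_{F'}$ as multiplication by $[F'':F']$. So $\varprojlim_{F'}Q_{F'}=0$ when $\Upsilon$ is infinite.
\item Left exactness of $\varprojlim$ then gives (ii).
\end{itemize}
In terms of the limit module, what vanishes is the norm (invariant) part, $\Lambda_{o_L}(\Upsilon)^{\Upsilon}=0$, not the coinvariants.
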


Henceforth we assume that $\mathfrak{F}_\infty\subseteq K$. Now let $T$ be as in Theorem \ref{thm:Kupferer}. Then the regulator map
\[\mathbf{L}_V^G\colon H^1_{\mathrm{Iw}}(X_\infty/F,T)\to D_L(\Gamma_L,K)\hten_{o_F} D_{\Qp}(\Upsilon,\widehat{\mathfrak{F}_\infty})\otimes_{F} D_{\cris,F}(V(\tau^{-1}))\] is defined as the composite of the maps from above:
\begin{align*}
  H^1_{\mathrm{Iw}}(X_\infty/F,T)& \xrightarrow{\phantom{m}\cong\phantom{m.}} \mathcal{N}_{\mathfrak{F}_\infty/F}(T(\tau^{-1}))^{\psi=1} \\
    & \xrightarrow{1-\frac{\pi_L}{q}\varphi} (\varphi^* \mathcal{N}_{\mathfrak{F}_\infty/F}(T(\tau^{-1})))^{\psi=0}  \\
  %  &  \xrightarrow{\phantom{1-\frac{\pi_L}{q_L}}}  (\mathcal{O}_{K}(\mathbf{B})\hten_{o_F}S_{\mathfrak{F}_\infty/F})^{\psi=0} \otimes_{F} D_{\cris,F}(V(\tau^{-1}))\\
    &  \xrightarrow{\phantom{1-\frac{\pi_L}{q_L}}}   (\mathcal{O}_{K}(\mathbf{B}))^{\psi=0}\hten_{o_F}S_{\mathfrak{F}_\infty/F} \otimes_{F} D_{\cris,F}(V(\tau^{-1}))\\
    &  \xrightarrow{\phantom{1-\frac{\pi_L}{q_L}}}  D_L(\Gamma_L,K)\hten_{\widehat{{\mathfrak{F}_\infty}}} D_{\Qp}(\Upsilon,\widehat{\mathfrak{F}_\infty})\otimes_{F} D_{\cris,F}(V(\tau^{-1}))
\end{align*}
For the second map we use \cite[Rem.\ 6.5.5]{Ku}  for $F'$ and take inverse limits.

\begin{theorem}\label{thm:descent}
  Let $T$ be as in Theorem \ref{thm:Kupferer}. Then the regulator map $\mathbf{L}_V^G$  is a homomorphism
	of $\Lambda_{o_L}(G)$-modules and has the following two properties, which uniquely determine this
	homomorphism:
	\begin{enumerate}
		\item If $F'/F$ is a finite, unramified extension contained in $\mathfrak{F}_\infty$, we get a commutative
		diagram
		\[ \begin{xy} \xymatrix@C-0.7pc{
			\HIW^1(X_\infty/F,T) \ar[rr]^-{\mathbf{L}_V^G} \ar[d]& &
			D_L(\GaL,K) \hten_{\widehat{{\mathfrak{F}_\infty}}} D_{\Qp}(\Upsilon,\widehat{\mathfrak{F}_\infty})\otimes_{L} \MLcris(V(\tau^{-1}))\ar[d] \\
			\HIW^1(F'\Lin/F,T) \ar[rr]^-{\mathbf{L}_V^{G'}} & &
			D_L(\GaL,K) \hten_{\widehat{{\mathfrak{F}_\infty}}} D_{\Qp}(\Upsilon',\widehat{\mathfrak{F}_\infty}) \otimes_{L} \MLcris(V(\tau^{-1})),
		} \end{xy} \]
		where $G'=\mathrm{Gal}(F'L_\infty/F)$,   $\Upsilon'=\mathrm{Gal}(F'/F)$ and by abuse of notation we have added the injection  
\[D_L(\GaL,K) \hten_{F'} D_{\Qp}(\Upsilon',F') \otimes_{L} \MLcris(V(\tau^{-1}))\hookrightarrow D_L(\GaL,K) \hten_{\widehat{{\mathfrak{F}_\infty}}} D_{\Qp}(\Upsilon',\widehat{\mathfrak{F}_\infty}) \otimes_{L} \MLcris(V(\tau^{-1}))\] 
 after the original $\mathbf{L}_V^{G'}$ in the second line. Note that we can recover $\mathbf{L}_V^{G'} $ from the regulator map $\mathbf{L}_V^{\Gamma_L} $ in \eqref{f:regulatorSV}(or rather its generalization over $F$ instead of $L$):
\[\mathbf{L}_V^{G'} (x)=\sum_{\tau\in \mathrm{Gal}(F'/F)}^{ } [\tau] \mathbf{L}_V^{\Gamma_L} (\tau^{-1}(x)).\]
		\item For any $x \in \HIW^1(X_\infty/F,T)$ and any character
		$\eta\colon \GaL \to \Cp^\times$ the distribution on $\Upsilon$, which is defined by twisting $\mathbf{L}_V^G(x)$ by $\eta$ and pushing forward along the projection to $\Upsilon,$
		is bounded.
	\end{enumerate}
\end{theorem}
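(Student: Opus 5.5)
The plan is to check the three assertions of Theorem \ref{thm:descent} separately: $\Lambda_{o_L}(G)$-linearity, properties (i) and (ii), and then uniqueness. All of them come from the description of $\mathbf{L}_V^G$ as a composite, together with Lemma \ref{lem:Ups-cohomology} and Lemma \ref{lem:KR-modules}.

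\emph{Linearity.} The isomorphism $H^1_{\mathrm{Iw}}(X_\infty/F,T)\cong \mathcal{N}_{\mathfrak{F}_\infty/F}(T(\tau^{-1}))^{\psi=1}$ of Lemma \ref{lem:KR-modules}(ii) is $\Lambda_{o_F}(G)$-linear. The operator $1-\frac{\pi_L}{q}\varphi$ commutes with $\Gamma_L$. It also commutes with $\Upsilon$, which acts on the Yager factor through $[u]$ and on coefficients compatibly with $\varphi\otimes\sigma_q$.

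The embedding of Lemma \ref{lem:KR-modules}(iv) is built from the Mellin transform \eqref{f:Mellintransform}, which is $D(\Gamma_L,K)$-linear. It also uses the inclusion $S_{\mathfrak{F}_\infty/F}\subseteq\Lambda_{\widehat{o_{\mathfrak{F}_\infty}}}(\Upsilon)$, which is $\Lambda(\Upsilon)$-linear. Hence the composite is a $\Lambda_{o_L}(G)$-homomorphism.

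\emph{Property (i).} By Lemma \ref{lem:KR-modules}(i), $\mathcal{N}_{\mathfrak{F}_\infty/F}=\varprojlim_{F'}\mathcal{N}_{F'/F}$. Corestriction $H^1_{\mathrm{Iw}}(X_\infty/F,T)\to H^1_{\mathrm{Iw}}(F'L_\infty/F,T)$ corresponds to the projection onto the $F'$-component. On the Yager side this is the map $S_{\mathfrak{F}_\infty/F}\to S_{F'/F}$, which is compatible with $\Lambda(\Upsilon)\to o_L[\Upsilon']$. Each step of the composite was defined levelwise and then passed to the limit (for the second map this uses \cite[Rem.\ 6.5.5]{Ku} at level $F'$). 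So the square commutes by construction.

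The formula $\mathbf{L}_V^{G'}(x)=\sum_{\tau}[\tau]\mathbf{L}_V^{\Gamma_L}(\tau^{-1}x)$ is checked as follows. Use the self-basis of $o_{F'}[\Upsilon']$ furnished by $S_{F'/F}$ (Lemma \ref{lem:Ups-cohomology}(ii)). Then the Kisin--Ren module over $F'$ decomposes via Shapiro's lemma into the $\Gamma_L$-regulator over $F'$ applied to the $\Upsilon'$-translates.

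\emph{Property (ii).} Twist by $\eta$ and push forward to $\Upsilon$. This amounts to evaluating the $D_L(\Gamma_L,K)$-factor at $\eta$, i.e.\ evaluating the Mellin-transformed power series. The $\Upsilon$-component then lies in the image of $\widehat{o_{\mathfrak{F}_\infty}}\hat\otimes S_{\mathfrak{F}_\infty/F}\subseteq \Lambda_{\widehat{o_{\mathfrak{F}_\infty}}}(\Upsilon)$, tensored with $\widehat{\mathfrak{F}_\infty}$ and multiplied by a single scalar. The scalar is bounded because $\varphi^*\mathcal{N}$ is a finitely generated lattice over $\mathbb{A}_F^+$, so the evaluations have bounded denominators. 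Elements of $\Lambda\otimes\mathbb{Q}$ are bounded distributions.

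\emph{Uniqueness.} Suppose $\mathbf{L}$ and $\mathbf{L}'$ both satisfy (i) and (ii), and let $\mu$ be the difference of their values at some $x$. By (i), all pushforwards of $\mu$ to finite quotients $\Upsilon'$ vanish. Twist by any $\eta$; by (ii) the resulting distribution on $\Upsilon$ is bounded. It is killed by every finite-level projection, so it lies in $\bigcap_{U}\ker(\Lambda\to\Lambda(\Upsilon/U))\otimes\mathbb{Q}=0$. A distribution in $D_L(\Gamma_L,K)\hten D_{\mathbb{Q}_p}(\Upsilon)$ all of whose $\eta$-specializations vanish is zero, since characters are dense enough, or equivalently the Mellin transform is injective. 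Hence $\mu=0$.

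\emph{Main obstacle.} The hardest step is (ii). One must turn the boundedness of the Yager part into a uniform bound after evaluation at an arbitrary, possibly non-locally-constant, character $\eta$ of $\Gamma_L$. I would try to handle this by evaluating first and noting that only finitely many $\Gamma_L$-components are involved. That reduces the question to boundedness of an element of $\widehat{\mathfrak{F}_\infty}\otimes S_{\mathfrak{F}_\infty/F}\otimes D_{\mathrm{cris}}$, which holds by Lemma \ref{lem:KR-modules}(i),(iii).
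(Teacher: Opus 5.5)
Your proposal is correct and takes essentially the same route as the paper. The paper's proof just cites \cite[Thm.\ 6.6.7]{Ku} and \cite[Thm.\ 4.7]{LZ}, remarking that each arrow in the defining composite of $\mathbf{L}_V^G$ must be checked to induce a commutative descent diagram, which is exactly your argument for linearity and for (i). Your sketches of (ii) and of uniqueness fill in what those references supply: boundedness holds because the image lies in the completed tensor product with $S_{\mathfrak{F}_\infty/F}$, with denominators controlled by compactness of $\varphi^*\mathcal{N}_{F/F}$; uniqueness holds because bounded distributions on $\Upsilon$ are determined by their finite-level pushforwards.
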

\begin{proof}
  \cite[Thm.\ 6.6.7]{Ku} and \cite[Thm.\ 4.7]{LZ}: for each arrow in the definition above one has to check that  they induce commutative descent diagrams.
\end{proof}

From the construction and property \eqref{f:Sinftyproperty} one sees immediately that actually $\mathbf{L}_V^G$ takes values in the subset
 $D_L(\Gamma_L,K)\hten_{\widehat{{\mathfrak{F}_\infty}}} D_{\Qp}(\Upsilon,\widehat{\mathfrak{F}_\infty})^\circ\otimes_{F} D_{\cris,F}(V(\tau^{-1}))$ (recall the notation introduced before Theorem \ref{thm:omega-descent}).
\subsection{Proof of Theorem \ref{thm:omega-descent}}\label{sec:proof}

\begin{lemma} Let $\varpi: G_F \twoheadrightarrow \Upsilon \to o_E^\times$ be an (unramified) character with $E$ some finite extension of $F$ and let $F'/F$ be a finite unramified extension. Then, for $T\in \Rep_{o_L,f}^{\cris,\mathrm{an}}(G_F)$ and $\Rep_{o_L,f}^{\cris,\mathrm{an}}(G_{F'})$ also
\begin{enumerate}
  \item $T'=T(\varpi)$ and
  \item $T''=\mathrm{Ind}^{F'}_F T$.
\end{enumerate}
belong to $\Rep_{o_L,f}^{\cris,\mathrm{an}}(G_F)$ and  $\Rep_{o_L,f}^{\cris,\mathrm{an}}(G_F)$, respectively.  If $T$ is crystalline, respectively nonnegative, then   $T'$ and $T''$ are so, too.
\end{lemma}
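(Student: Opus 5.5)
The plan is to check the three properties (crystalline with nonnegative weights, $L$-analyticity, and the $o_L$-lattice being free) separately for $T'$ and $T''$. The main tool is the behaviour of $D_{\cris}$ and $D_{\dR}$ under unramified twists and under induction from an unramified extension.

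For (i), freeness over $o_L$ of $T'=T(\varpi)$ is immediate if we regard it as $T\otimes_{o_L}o_E(\varpi)$ and view it as an $o_L$-lattice. The character $\varpi$ factors through the unramified quotient $\Upsilon$, so $E(\varpi)$ is an unramified, hence crystalline, representation. Its period lies in $\widehat{o_{\mathfrak{F}_\infty}}\subseteq W(\overline{k})[1/p]\subseteq B_{\cris}^{\varphi\text{-stable}}$, which is exactly the observation behind \eqref{f:Dcrisunramifiedtwist}. Consequently
\[D_{\cris,F}(V(\varpi))\otimes_F\widehat{\mathfrak{F}_\infty}\cong E\otimes_{\Qp}D_{\cris,F}(V)\otimes_F\widehat{\mathfrak{F}_\infty}\]
compatibly with filtrations, because the period lies in $\Fil^0\setminus\Fil^1$. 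This gives three things at once:

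\begin{itemize}
\item $V(\varpi)$ is crystalline, by a dimension count.
\item Its Hodge--Tate weights (with multiplicity) coincide with those of $V$, so nonnegativity is preserved.
\item The decomposition of $D_{\dR,F}(V(\varpi))$ along the maximal ideals of $L\otimes_{\Qp}F$ (or of $L\otimes_{\Qp}L$ after restriction) matches that of $V$ after the scalar extension, which is faithfully flat. Triviality of the filtration at the non-identity embeddings is therefore inherited, and $V(\varpi)$ is analytic.
\end{itemize}

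For (ii), $T''=\mathrm{Ind}_{G_{F'}}^{G_F}T=o_L[G_F]\otimes_{o_L[G_{F'}]}T$ is free over $o_L$ of rank $[F':F]\,\mathrm{rk}\,T$. We then use the standard isomorphism $D_{\ast,F}(\mathrm{Ind}^{F'}_F V)\cong D_{\ast,F'}(V)$ for $\ast\in\{\cris,\dR\}$ (Shapiro-type: $(B\otimes\mathrm{Ind}V)^{G_F}=(B\otimes V)^{G_{F'}}$), which respects filtration and Frobenius. Since $F'/F$ is unramified, $[F':F]\dim_{\Qp}V=\dim_{F}D_{\cris,F'}(V)$, and the crystalline dimension equality holds. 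The Hodge--Tate weights are unchanged, each repeated $[F':F]$ times, so nonnegativity is preserved.

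For analyticity we compare the two decompositions. On one side is $L\otimes_{\Qp}F\cong\prod$ over embeddings, acting on $D_{\dR,F}(V'')$. On the other is $L\otimes_{\Qp}F'$, acting on $D_{\dR,F'}(V)$. A maximal ideal of $L\otimes_{\Qp}L$ not corresponding to $\id$ pulls back to components of $L\otimes_{\Qp}F'$ on which $L$ acts through a non-identity embedding. These are exactly the components on which analyticity of $T$ over $F'$ forces the filtration to be trivial.

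The main obstacle is this last bookkeeping step: making precise which components of $D_{\dR}$ correspond to "non-identity" under the identification $D_{\dR,F}(\mathrm{Ind}V)=D_{\dR,F'}(V)$. In particular, one must check that the $L$-linear structure (the coefficient action, not the base field action) is what defines the decomposition, and that this is preserved by induction.
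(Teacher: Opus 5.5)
Your proposal is correct. For (i) it is a fleshed-out version of the paper's one-line proof: an unramified twist does not change the filtration or the Hodge--Tate weights at any embedding. There is one small slip: for ramified $L/\Qp$ one has $\widehat{o_{\mathfrak{F}_\infty}}\not\subseteq W(\overline{k})[1/p]$, but the period of $\varpi$ lies in $\widehat{\mathfrak{F}_\infty}\otimes_L E\subseteq B_{\mathrm{cris},L}\otimes_L E$ with trivial filtration, which is all you use.

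For (ii) the paper leaves the proof as an exercise, and your Shapiro-type isomorphism $D_{\ast,F}(\mathrm{Ind}^{F'}_F V)\cong D_{\ast,F'}(V)$ is the natural argument. The bookkeeping you flag is harmless. The isomorphism is induced by a $B_{\mathrm{dR}}$-linear map that also respects the coefficient $L$-action, so it is an isomorphism of filtered $L\otimes_{\Qp}L$-modules and matches the $\mathfrak{m}$-components directly.
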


\begin{proof}
   (ii) is not needed in this paper and left as an exercise. The reason for (i) and the additional statement is, that a twist by an unramified character does not change the Hodge-Tate weights (for any embedding of $E$ into $\Cp$.)
\end{proof}

For simplicity we assume $F=L$ henceforth. We set $B_{\mathrm{cris},L}:=B_{\mathrm{cris}}\otimes_{L_0}L,$ where $L_0$ denotes the maximal absolute unramified subfield of $L.$ Note that $\widehat{\mathfrak{F}_\infty}\subseteq B_{\mathrm{cris},L}.$
Then, for a crystalline, $L$-analytic representation $W$ of $G_L$ and a continuous character $\varpi\colon \Upsilon \to o_E^{\times}$ we let $b_\varpi$ denote the unique isomorphism
\[\widehat{\mathfrak{F}_\infty}\otimes_{L} \mathcal{D}_{\mathrm{cris},L}(W)\otimes_L E(\varpi) \cong \widehat{\mathfrak{F}_\infty}\otimes_{L} \mathcal{D}_{\mathrm{cris},L}(W(\varpi))\]
such that extending scalars gives a commutative diagram
\[\xymatrix{
  B_{\mathrm{cris},L}\otimes_{L}\mathcal{D}_{\mathrm{cris},L}(W)\otimes_L E(\varpi) \ar[d]_{\mathrm{can}(W)\otimes \id} \ar[r]^-{b_\varpi} & B_{\mathrm{cris},L}\otimes_{L}\mathcal{D}_{\mathrm{cris},L}(W(\varpi)) \ar[d]^{\mathrm{can}(W(\varpi))} \\
  B_{\mathrm{cris},L}\otimes_{L}W\otimes_L E(\varpi) \ar[r]^-{\id} & B_{\mathrm{cris},L}\otimes_{L}W(\varpi)   }\]
where $\mathrm{can}(W)\colon  B_{\mathrm{cris},L}\otimes_{L}\mathcal{D}_{\mathrm{cris},L}(W)\cong B_{\mathrm{cris},L}\otimes_{L}W$ denotes the canonical isomorphism induced from the classical comparison isomorphism $ B_{\mathrm{cris}}\otimes_{L_0}\mathcal{D}^{\Qp}_{\mathrm{cris},L}(W)\cong B_{\mathrm{cris}}\otimes_{\Qp}W $ with $\mathcal{D}^{\Qp}_{\mathrm{cris},L}(W):=(B_{\mathrm{cris}}\otimes_{\Qp} W)^{G_L}.$ The existence follows from \cite[Prop.\ 6.2.24]{Ku}, i.e., from the analogue of \cite[Prop.\ 3.8]{LZ}, implying that $\widehat{\mathfrak{F}_\infty}\otimes_{L} E $ contains a period of $\varpi$.
Together with the canonical twisting map
\[D_{\Qp}(\Upsilon,\widehat{\mathfrak{F}_\infty}) \otimes_{L} E\to D_{\Qp}(\Upsilon,\widehat{\mathfrak{F}_\infty}) \otimes_{L} E \]
which is induced by sending $u\mapsto \varpi(u)^{-1}u$ for $u\in\Upsilon,$ we obtain a map
\begin{align*}
  \mathrm{Tw}_{\varpi^{-1}}:E \otimes_L D_L(\GaL,K) \hten_{\OF} D_{\Qp}(\Upsilon,&\widehat{\mathfrak{F}_\infty}) \otimes_{L} \MLcris(V(\tau^{-1}))\\ & \to  D_L(\GaL,K) \hten_{\OF} D_{\Qp}(\Upsilon,\widehat{\mathfrak{F}_\infty})  \otimes_{L} \MLcris(V(\tau^{-1}\varpi)),
\end{align*}
which has the property regarding evaluation at the trivial character $\mathbf{1}\colon \Upsilon\to o_L^\times$
\begin{equation}\label{f:evaluationtwist}
   \mathrm{Tw}_{\varpi^{-1}}(\mu)(\mathbf{1})=\mu(\varpi^{-1})
\end{equation}
as elements in $D_L(\GaL,K)   \otimes_{L} \widehat{\mathfrak{F}_\infty}\otimes_{L}\MLcris(V(\tau^{-1}\varpi)). $
%where $\mathrm{can}(W)\colon  B_{\mathrm{cris}}\otimes_{L_0}\mathcal{D}_{\mathrm{cris},L}(W)\cong B_{\mathrm{cris}}\otimes_{L_0}W$ denotes the canonical isomorphism induced from the classical comparison isomorphism $ B_{\mathrm{cris}}\otimes_{L_0}\mathcal{D}^{\Qp}_{\mathrm{cris},L}(W)\cong B_{\mathrm{cris}}\otimes_{\Qp}W $ with $\mathcal{D}^{\Qp}_{\mathrm{cris},L}(W):=(B_{\mathrm{cris}}\otimes_{\Qp} W)^{G_L}.$

As mentioned in \cite[Prop.\ 6.6.9]{Ku} the proof of the next result is completely analogous to that of \cite[Prop.\ 4.12]{LZ}:
\begin{proposition}
  \label{prop:omegatwist}
	Let $E/L$ be a finite extension and $\varpi\colon \Upsilon \to o_E^{\times}$ a continuous
	homomorphism. Then the following $G$-equivariant diagram commutes
	\[ \begin{xy} \xymatrix@C-1pc{
		 o_E(\varpi) \otimes_{o_L} \HIW^1(X_\infty/L, T) \ar[r]^-{\id\otimes\mathbf{L}_V^G} \ar[d]^{\cong} &
		E(\varpi) \otimes_L D_L(\GaL,K) \hten_{\widehat{\mathfrak{F}_\infty}} D_{\Qp}(\Upsilon,\widehat{\mathfrak{F}_\infty}) \otimes_{L} \MLcris(V(\tau^{-1})) \ar[d]^{\mathrm{Tw}_{\varpi^{-1}}} \\
		 \HIW^1(X_\infty/L, T(\varpi)) \ar[r]^-{\mathbf{L}^G_{V(\varpi)}} &
		D_L(\GaL,K) \hten_{\widehat{\mathfrak{F}_\infty}} D_{\Qp}(\Upsilon,\widehat{\mathfrak{F}_\infty})  \otimes_{L} \MLcris(V(\tau^{-1}\varpi)).
	} \end{xy} \]
\end{proposition}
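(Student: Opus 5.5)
The plan is to follow the strategy of \cite[Prop.\ 4.12]{LZ}. We check that each of the maps composing $\mathbf{L}_V^G$ (see the definition before Theorem \ref{thm:descent}) is compatible with twisting by $\varpi$. Since both horizontal maps are $\Lambda_{o_L}(G)$-linear and continuous, it suffices to compare them step by step. We pass through the four stages: the isomorphism $H^1_{\mathrm{Iw}}\cong \mathcal{N}_{\mathfrak{F}_\infty/L}(\cdot(\tau^{-1}))^{\psi=1}$, the map $1-\frac{\pi_L}{q}\varphi$, the embedding into $\mathcal{O}_K(\mathbf{B})^{\psi=0}\hten S_{\mathfrak{F}_\infty/L}\otimes D_{\cris,L}$, and finally Mellin transform combined with the inclusion of $S_{\mathfrak{F}_\infty/L}$ into $D_{\Qp}(\Upsilon,\widehat{\mathfrak{F}_\infty})$.

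\textbf{Step 1: Iwasawa cohomology.} On the Iwasawa cohomology side, $o_E(\varpi)\otimes H^1_{\mathrm{Iw}}(X_\infty/L,T)\cong H^1_{\mathrm{Iw}}(X_\infty/L,T(\varpi))$ holds because $\varpi$ factors through $G$. This is cup product with $\eta_\varpi$; it is an isomorphism of $\Lambda(G)$-modules up to the automorphism $g\mapsto\varpi(g)g$.

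\textbf{Step 2: relative Wach modules.} Using Lemma \ref{lem:KR-modules}(i) and (v), I would identify
\[
\mathcal{N}_{\mathfrak{F}_\infty/L}(T(\varpi)(\tau^{-1}))\cong o_E(\varpi)\otimes\mathcal{N}_{\mathfrak{F}_\infty/L}(T(\tau^{-1})).
\]
Concretely, $\mathcal N_{L/L}(T(\varpi)(\tau^{-1}))$ is obtained from $\mathcal N_{L/L}(T(\tau^{-1}))$ by tensoring with $(\widehat{o_{\mathfrak F_\infty}}\otimes o_E(\varpi))^{\Upsilon}$, which is generated by a period of $\varpi$. On the limit over $F'$, this corresponds to the twist $u\mapsto\varpi(u)^{-1}u$ on the Yager module $S_{\mathfrak{F}_\infty/L}$, via \eqref{f:Sinftyproperty}. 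The key point is that $\mathcal{N}_{\mathfrak{F}_\infty/L}(T)=\varprojlim_{F'}\mathcal N_{F'/L}(T)$ already "sees" all unramified twists. The isomorphism of Lemma \ref{lem:KR-modules}(ii) is functorial and is built from Fontaine's isomorphism over $X_\infty$, so it intertwines the cup product with $\eta_\varpi$ with this identification. The operators $\varphi$ and $\psi$ commute with it because $\varpi$ is unramified: $\varphi$ acts on the period of $\varpi$ through Frobenius, and this is absorbed into the $\varphi\otimes\sigma_q$-structure. Hence $1-\frac{\pi_L}{q}\varphi$ is compatible with the twist.

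\textbf{Step 3: comparison with $D_{\cris}$.} The embedding $\varphi^*\mathcal N_{L/L}(W)\hookrightarrow \mathcal O\otimes D_{\cris,L}(W)$ comes from the comparison isomorphism inside $B_{\cris,L}$. By the defining diagram of $b_\varpi$, extending scalars to $\widehat{\mathfrak{F}_\infty}$ gives compatibility of this embedding for $W=V(\tau^{-1})$ and $W(\varpi)$. This is exactly what $\mathrm{Tw}_{\varpi^{-1}}$ does on the $D_{\cris}$-factor. The Mellin transform only involves the $\Gamma_L$-variable, so it commutes with everything. On the $\Upsilon$-factor, the inclusion $S_{\mathfrak{F}_\infty/L}\subseteq D_{\Qp}(\Upsilon,\widehat{\mathfrak{F}_\infty})$ turns the Yager twist into $u\mapsto\varpi(u)^{-1}u$, which completes the identification of the right vertical map with $\mathrm{Tw}_{\varpi^{-1}}$.

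\textbf{Main obstacle.} I expect the main difficulty to be the precise bookkeeping in Step 2. One has to see that the period of $\varpi$ in $\widehat{\mathfrak F_\infty}\otimes E$ (guaranteed by \cite[Prop.\ 6.2.24]{Ku}) corresponds under \eqref{f:Yager} to the twist of the distribution variable, with the correct sign ($\varpi^{-1}$ rather than $\varpi$). I would first verify this at finite level $F'$, where $S_{F'/L}$ is explicit and elements of $o_{F'}[\mathrm{Gal}(F'/L)]$ can be twisted by $\sum\varpi(g)^{-1}[g]$. The uniqueness part of Theorem \ref{thm:descent} then lets us pass to the limit. Boundedness after twisting is clear, since $\varpi$ is $o_E^\times$-valued.
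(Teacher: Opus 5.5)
Your Steps 1--3 are the intended argument. The paper gives no proof of its own and refers to \cite[Prop.\ 6.6.9]{Ku} and \cite[Prop.\ 4.12]{LZ}. There one checks exactly that each constituent of $\mathbf{L}^G_V$ commutes with the twist, with the period of $\varpi$ cancelling between the $\Upsilon$-variable and the $D_{\cris}$-factor.

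The step that would fail is how your last paragraph proposes to settle the bookkeeping. Verifying at a finite layer $F'$ and then invoking the uniqueness in Theorem \ref{thm:descent} only works when $\varpi$ has finite order. The essential case is $\varpi$ of infinite order (e.g.\ $\chi_\epsilon$ in \S\ref{sec:comparison}), and there the reduction breaks down in three ways:
\begin{itemize}
\item If $\varpi$ does not factor through $\mathrm{Gal}(F'/L)$, the substitution $[g]\mapsto\varpi(g)^{-1}[g]$ is not defined on $o_{F'}[\mathrm{Gal}(F'/L)]$.
\item Projection to $\mathrm{Gal}(F'/L)$ does not commute with $\mathrm{Tw}_{\varpi^{-1}}$.
\item Checking property (i) of Theorem \ref{thm:descent} for the twisted map would require comparing $\mathbf{L}^{G'}_{V(\varpi)}$ with $\mathbf{L}^{G'}_{V}$ although $\varpi|_{G_{F'}}\neq 1$. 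Since $\HIW^1(F'L_\infty/F',T(\varpi))$ is not a twist of $\HIW^1(F'L_\infty/F',T)$, this is the original problem again.
\end{itemize}

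Instead, do the bookkeeping directly at the infinite level, where it is elementary. Take $\Omega_\varpi\in(\widehat{\mathfrak{F}_\infty}\otimes_L E)^\times$ with $u(\Omega_\varpi)=\varpi(u)\Omega_\varpi$.
\begin{itemize}
\item For $\lambda$ as in \eqref{f:Sinftyproperty} one has $\mathrm{Tw}_{\varpi^{-1}}(\lambda)^u=\varpi(u)^{-1}[u]\,\mathrm{Tw}_{\varpi^{-1}}(\lambda)$. Hence $\Omega_\varpi\mathrm{Tw}_{\varpi^{-1}}(\lambda)$ again satisfies \eqref{f:Sinftyproperty}.
\item One has $b_\varpi(d\otimes\eta_\varpi)=\Omega_\varpi\cdot(d\otimes\Omega_\varpi^{-1}\eta_\varpi)$ with $d\otimes\Omega_\varpi^{-1}\eta_\varpi\in\MLcris(W(\varpi))$.
\end{itemize}
Together these give $\mathrm{Tw}_{\varpi^{-1}}(\lambda\otimes d\otimes\eta_\varpi)=\Omega_\varpi\mathrm{Tw}_{\varpi^{-1}}(\lambda)\otimes(d\otimes\Omega_\varpi^{-1}\eta_\varpi)$, so the periods cancel. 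What remains is to match this with the infinite-level identification of your Step 2. That identification is the $G_L$-equivariant twist $\mathbb{T}_\Upsilon\otimes_{o_L}o_E(\varpi)\cong\mathbb{T}_\Upsilon\otimes_{o_L}o_E$, transported through Lemma \ref{lem:Ups-cohomology}(ii) and Lemma \ref{lem:KR-modules}(v). This can be checked on $\widehat{o_{\mathfrak{F}_\infty}}\hat{\otimes}_{o_L}\mathbb{T}_\Upsilon$ itself, so neither boundedness nor the uniqueness statement is needed.
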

Now Theorem \ref{thm:omega-descent} is an immediate consequence of Proposition \ref{prop:omegatwist} and Theorem \ref{thm:descent}.

\section{(Dual) Bloch-Kato exponential maps in unramified towers}\label{sec:unramiedtowers}

In this section, we improve Colmez' reciprocity law in the Lubin-Tate setting proved in \cite[Thm.\ 3.2.1]{SanoV}.

\subsection{Construction of big (dual) Bloch-Kato exponential maps}

The following treatment of deforming the (dual) Bloch-Kato exponential maps along {\it unramified} towers is inspired by \cite[\S 3.2]{vener} and \cite{ochiai} and was gratefully suggested  to us by David Loeffler.

We keep the notation from the beginning of \S \ref{sec:regKu} as illustrated in diagram \eqref{f:unframifieddiagram} but we want to stress that no data of any Lubin-Tate formal group is needed in this subsection. We write 
$$R:=
\Lambda_{o_L}(\Upsilon)\otimes_{o_L}L.$$
Let $T$ be in $\Rep_{o_L}(G_F)$, set $V:=T\otimes_{o_L}L$ and $A:=T\otimes_{o_L} L/o_L.$ The corresponding Kummer duals will be denoted by $T^*(1):=\Hom_{o_L}(T,o_L)(1),$ $V^*(1)=\Hom_{L}(V,o_L)(1)=T^*(1)\otimes_{o_L}L$ and $A^*(1):=T^\vee(1):=\Hom_{o_L}(T,L/o_L)(1)=T^*(1)\otimes_{o_L} L/o_L.$ For the first part of this section we will assume the following:

\begin{assumption}\label{assump}
$H^0(G_{\mathfrak{F}_\infty},T)=0$.
%\begin{equation}\label{f:assumption}
%\mbox{\bf  {Assumption I:     }} \;\;\;
%H^0(G_{\mathfrak{F}_\infty},T)=0.
%\phantom{mmmmmmmmmmmmmmmmmmmmmmmm}
%\end{equation}
\end{assumption}

We recall that the Iwasawa cohomology $\HIW^1(\mathfrak{F}_\infty/F, V)=\HIW^1(\mathfrak{F}_\infty/F, T)\otimes_{o_L} L$ is isomorphic to $\H^1(F,\mathbb{T}_\Upsilon(T))\otimes_{o_L} L=\H^1(F,\mathbb{T}_\Upsilon(V))$, where $\mathbb{T}_\Upsilon(T):=T\otimes_{o_L}\mathbb{T}_\Upsilon$ with diagonal $G_F$-action, similarly for $\mathbb{T}_\Upsilon(V).$

For the definition of the big dual exponential map the following statement will be crucial.

\begin{proposition}\label{prop:Iw-perfectDual}\
 \begin{enumerate}
   \item $ \HIW^1(\mathfrak{F}_\infty/F, V)$ is a projective $R$-module of rank $\dim_{L}(V)[F:\mathbb{Q}_p]$.
   \item The cup product
   \[\langle ,\rangle_{\Tate,R}: \HIW^1(\mathfrak{F}_\infty/F, V) \times \HIW^1(\mathfrak{F}_\infty/F, V^*(1))\xrightarrow{\cup} R\]
   induces an isomorphism of $R$-modules
   \[\HIW^1(\mathfrak{F}_\infty/F, V)\cong\Hom_R(\HIW^1(\mathfrak{F}_\infty/F, V^*(1)),R).\]
 \end{enumerate}
\end{proposition}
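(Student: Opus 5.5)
We work with Shapiro's lemma, identifying $\HIW^i(\mathfrak{F}_\infty/F,T)\cong \H^i(F,\mathbb{T}_\Upsilon(T))$, and use the perfect complex $C:=R\Gamma(F,\mathbb{T}_\Upsilon(T))$ of $\Lambda_{o_L}(\Upsilon)$-modules. It is perfect because $\Lambda:=\Lambda_{o_L}(\Upsilon)$ is a finite product of regular local rings $o_L[[X]]$ (or a group ring of a finite cyclic group; after inverting $p$ the ring $R$ is regular in all cases). Its cohomology sits in degrees $0,1,2$.

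For (i), we first show that $\H^0$ and $\H^2$ vanish after $\otimes L$. For $\H^0$: $\H^0(F,\mathbb{T}_\Upsilon(T))=\varprojlim_{F'}\H^0(F',T)$ with corestriction transitions. This is contained in $\H^0(G_{\mathfrak{F}_\infty},T)$-type invariants and is therefore zero by Assumption \ref{assump}; more precisely, a compatible system lies in $T^{G_{\mathfrak{F}_\infty}}=0$. For $\H^2$: by local Tate duality, $\H^2(F,\mathbb{T}_\Upsilon(T))\cong \H^0(\mathfrak{F}_\infty,A^*(1))^\vee$, which is finitely generated over $o_L$. Since $\mathfrak{F}_\infty$ is unramified over $F$, the group $\H^0(\mathfrak{F}_\infty,A^*(1))$ is finite whenever $\H^0(\mathfrak{F}_\infty,V^*(1))=0$. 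In general we only need $\H^2\otimes L$ to be torsion with no $R$-depth issue. The cleaner route is the following: $C\otimes^{\mathbb{L}}_\Lambda R$ is a perfect $R$-complex concentrated in degrees $[0,2]$, and $\H^0=0$. Moreover, $\H^2\otimes L$ is a finite-dimensional $L$-vector space, hence has codimension $\ge1$, i.e. it is $R$-torsion (it is killed by a nonzero element of $\Lambda$ when $\Upsilon$ is infinite). Since $R$ is regular of dimension $\le1$ (a product of PIDs after inverting $p$, as $o_L[[X]][1/p]$ is a PID-like Dedekind ring), $\H^1$ is torsion-free precisely because $\H^0$ of the derived base change along $R\to R/\mathfrak{m}$ vanishes for every maximal ideal $\mathfrak{m}$. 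Concretely, $\H^1(C)\otimes L[\mathfrak{m}]\hookleftarrow \H^0(C\otimes^{\mathbb L}R/\mathfrak{m})=\H^0(F,V(\varpi))$-type groups for the corresponding characters, which vanish by Assumption \ref{assump}. Hence $\H^1\otimes L$ is torsion-free over a Dedekind-type ring and thus projective. The rank is the Euler characteristic: $-\chi(C)\otimes\mathrm{Frac}=[F:\Qp]\dim_L V$ by Tate's local Euler characteristic formula applied at any specialization, using that $\H^2$ is torsion. When $\Upsilon$ is finite, $R=L[\Upsilon]$ is semisimple, so every module is projective, and the rank is computed character by character in the same way.

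For (ii), we apply Grothendieck–Tate duality for perfect complexes (Nekovář):
\[R\Gamma(F,\mathbb{T}_\Upsilon(T))\cong R\Hom_\Lambda\bigl(R\Gamma(F,\mathbb{T}_\Upsilon(T^*(1)))^\iota,\Lambda\bigr)[-2],\]
induced by the cup product, with the involution $\iota$ accounting for $\kappa_\Upsilon$ versus $\kappa_\Upsilon^{-1}$. After tensoring with $L$ and taking $\H^1$, the hyperext spectral sequence gives $0\to\mathrm{Ext}^1_R(\H^2_*,R)\to \H^1\to\Hom_R(\H^1_*,R)\to\mathrm{Ext}^2_R(\H^0_*,R)$, where $*$ denotes the $V^*(1)$-side. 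Here $\mathrm{Ext}^2_R=0$ since $R$ has global dimension $\le1$. The term $\mathrm{Ext}^1_R(\H^2_*,R)$ is torsion, while $\H^1$ is projective by (i), so this torsion term must vanish; equivalently, the injection of a torsion module into a torsion-free one forces it to be zero. Hence the cup-product pairing induces the claimed isomorphism. The identification of the duality map with $\langle\,,\rangle_{\Tate,R}$ is standard.

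The main obstacle is (i): establishing torsion-freeness (no pseudo-null or finite submodules) from Assumption \ref{assump}, which is imposed only for $T$ and not for $T^*(1)$. This requires care with the control sequence $0\to \H^1(C)/\mathfrak{m}\to\H^1(F,V(\varpi))$ and with the vanishing of $\H^1(C)[\mathfrak{m}]$ via $\H^0$.
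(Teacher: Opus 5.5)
Your argument is correct in the case the paper has in mind ($\Upsilon$ infinite, as in Lemma~\ref{lem:Ups-cohomology}), but for (i) it takes a genuinely different route.

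\textbf{Part (i).} The paper does not argue directly. It quotes Venjakob's structure result for local Iwasawa cohomology \cite[Prop.\ 3.2]{venComp}, applied to $A^*(1)$. That result describes $\HIW^1$ integrally, up to a module $M$ which is finite here, and the paper then inverts $p$.

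You instead use that after inverting $p$ the ring $R$ is a finite product of PIDs. Projectivity therefore reduces to torsion-freeness, which you test at maximal ideals, i.e.\ at characters $\varpi$. The universal coefficient sequence for the perfect complex $C_R$ identifies $\HIW^1(\mathfrak{F}_\infty/F,V)[\mathfrak m]$ with $\mathrm{H}^0(F,V(\varpi^{-1}))\subseteq (V\otimes_L E)^{G_{\mathfrak{F}_\infty}}=0$. The rank then comes from Tate's Euler characteristic formula, via constancy of the Euler characteristic of a perfect complex.

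What each approach buys: yours is self-contained and shows exactly where Assumption~\ref{assump} enters. However, it depends on $\dim R\le 1$ and says nothing before inverting $p$. The cited structure theory also gives the integral version mentioned in the remark after the proposition.

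\textbf{Part (ii).} Both proofs rest on Nekov\'a\v{r}'s duality (Jannsen's spectral sequence); they differ only in how the obstruction term is killed. The paper uses that $\HIW^2(\mathfrak{F}_\infty/F,T^*(1))\cong \mathrm{H}^0(\mathfrak{F}_\infty,A)^\vee$ is finite by Assumption~\ref{assump}. You kill $\mathrm{Ext}^1_R(\HIW^2(\mathfrak{F}_\infty/F,V^*(1)),R)$ as a torsion submodule of the torsion-free module from (i). Both are valid.

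\textbf{Two small corrections.}
\begin{enumerate}
\item The map you need from the universal coefficient sequence is the surjection $\mathrm{H}^0(C_R\otimes^{\mathbb L}_R R/\mathfrak m)\twoheadrightarrow \mathrm{Tor}_1^R(\mathrm{H}^1(C_R),R/\mathfrak m)=\mathrm{H}^1(C_R)[\mathfrak m]$. Here it is an isomorphism, since $\mathrm{H}^0(C_R)=0$. An injection in the direction you wrote would not suffice on its own.
\item Your claim that for finite $\Upsilon$ the rank is obtained ``in the same way'' is wrong. In that case $\mathrm{H}^2(F,V(\varpi^{-1}))\cong \mathrm{H}^0(F,V^*(1)(\varpi))^*$ need not vanish. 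The $\varpi$-component of $\HIW^1$ then has $E$-dimension $[F:\mathbb{Q}_p]\dim_L V+\dim_E \mathrm{H}^0(F,V^*(1)(\varpi))$. So the rank statement needs $\Upsilon$ infinite, or $\mathrm{H}^0(\mathfrak{F}_\infty,V^*(1))=0$. In the finite case, projectivity is just the semisimplicity of $L[\Upsilon]$.
\end{enumerate}
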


\begin{remark}
Note that the statements of the proposition are, in general, not symmetric, i.e., they do not apply necessarily to $V^*(1)$ as long as Assumption \ref{assump} is not satisfied for $T^*(1),$ too.
\end{remark}

\begin{proof}
 The first assertion follows from \cite[Prop.\ 3.2]{venComp} applied to $A^*(1)$ (instead of $A$ there) by inverting $p$ and noting that the module $M$ in (loc.\ cit.) is finite.
 The second item is a direct consequence of \cite[Prop.\ 8.11.4 (i)]{nek} after inverting $p$ upon noting that the long exact sequence there is attached to Jansen's spectral sequence ((8.11.2.4) in loc.\ cit.), which in turn is induced by Tate's local duality. Note that kernel and cokernel of the homomorphism in question vanish by Prop.\ 8.11.5 (i) in (loc.\ cit.) due to our Assumption \ref{assump} and finiteness of the $E^2$-term.
\end{proof}

\begin{remark}
Under the stronger assumption that   the quotient $H^0(G_{\mathfrak{F}_\infty},A)_\mathrm{div}$ of $H^0(G_{\mathfrak{F}_\infty},A)$ by its maximal $p$-divisible submodule vanishes the same references imply the projectivity and duality result in the previous proposition even without inverting $p.$
\end{remark}

Henceforth we assume, in addition, that $V$ is de Rham and consider the Bloch-Kato exponential map
\begin{equation}\label{f:BKFinfty}
   \exp_{\mathfrak{F}_\infty,V}:  D_{\dR,\mathfrak{F}_\infty}(V) \to  H^1(\mathfrak{F}_\infty,V),
\end{equation}
completed   base change of which with respect to $\mathbb{T}_\Upsilon:=\Lambda_{o_L}(\Upsilon)$ gives
\begin{equation}\label{f:basechanged}
    \exp_{\mathfrak{F}_\infty,V}\hat{\otimes}_{o_L}\mathbb{T}_\Upsilon:  D_{\dR,\mathfrak{F}_\infty}(V)\hat{\otimes}_{o_L}\mathbb{T}_\Upsilon \to  H^1(\mathfrak{F}_\infty,V)\hat{\otimes}_{o_L}\mathbb{T}_\Upsilon.
\end{equation}
\begin{lemma}\label{lem:completedbasechange} We have a canonical isomorphism of $\Upsilon$-modules
\begin{equation}\label{f:tensorcohom}
    H^1(\mathfrak{F}_\infty,V)\hat{\otimes}_{o_L}\mathbb{T}_\Upsilon\cong  H^1(\mathfrak{F}_\infty, \mathbb{T}_\Upsilon(V)).
\end{equation}
\end{lemma}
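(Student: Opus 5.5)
The plan is to reduce the statement to finite quotients of $\Upsilon$ and then pass to the limit. On the finite level the key point is that $G_{\mathfrak{F}_\infty}$ acts trivially on $\mathbb{T}_\Upsilon$, since the character $\kappa_\Upsilon$ factors through $\Upsilon=\Gal(\mathfrak{F}_\infty/F)$.

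For each open subgroup $U\subseteq\Upsilon$, the group $G_{\mathfrak{F}_\infty}$ acts on $T\otimes_{o_L}o_L[\Upsilon/U]$ only through the first factor. Since $o_L[\Upsilon/U]$ is a finite free $o_L$-module with trivial $G_{\mathfrak{F}_\infty}$-action, continuous cochains commute with this finite direct sum. This gives canonical isomorphisms
\[H^i(\mathfrak{F}_\infty,T)\otimes_{o_L}o_L[\Upsilon/U]\cong H^i(\mathfrak{F}_\infty,T\otimes_{o_L}o_L[\Upsilon/U]).\]
These isomorphisms are compatible with the transition maps in $U$. They are also $\Upsilon$-equivariant: $\Upsilon$ acts on the left side diagonally, via conjugation on cohomology and via $\kappa_\Upsilon$ on $o_L[\Upsilon/U]$, and on the right side by the natural action of $G_F/G_{\mathfrak{F}_\infty}$ on cohomology of the normal subgroup $G_{\mathfrak{F}_\infty}$. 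The equivariance is immediate from the explicit conjugation formula on cochains.

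Next I pass to the inverse limit over $U$. The left side becomes $H^1(\mathfrak{F}_\infty,T)\hat{\otimes}_{o_L}\mathbb{T}_\Upsilon$ by definition of the completed tensor product. For the right side I need
\[H^1(\mathfrak{F}_\infty,\varprojlim_U T\otimes o_L[\Upsilon/U])\cong\varprojlim_U H^1(\mathfrak{F}_\infty,T\otimes o_L[\Upsilon/U]).\]
I would obtain this from the standard exact sequence for continuous cohomology of an inverse limit of compact modules (Jannsen / Nekovář), whose correction term is $\varprojlim^1_U H^0(\mathfrak{F}_\infty,T\otimes o_L[\Upsilon/U])$. By Assumption \ref{assump} each of these $H^0$-groups equals $H^0(\mathfrak{F}_\infty,T)\otimes o_L[\Upsilon/U]=0$, so the $\varprojlim^1$-term vanishes. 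I expect this step to be the main obstacle. Here I would check that $H^1(\mathfrak{F}_\infty,T)$ is reasonable enough (Hausdorff, with the compact-open topology) for the completed tensor product to make sense. If this fails, I would instead define the left side as the inverse limit itself, which is how it is used.

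Finally, I invert $p$. Because $\mathbb{T}_\Upsilon$ is compact and the relevant $\varprojlim^1$ vanishes, tensoring with $L$ commutes with the limit, provided it is interpreted as $(\,\cdot\,)\otimes_{o_L}L$ of the integral statement. This yields \eqref{f:tensorcohom} for $V$ and $\mathbb{T}_\Upsilon(V)$, with $\Upsilon$-equivariance inherited from the finite levels.
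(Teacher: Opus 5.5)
Your proposal is correct and follows essentially the same route as the paper. Both reduce to the finite levels $o_L[\Upsilon/U]$, where cohomology commutes with the finite free factor carrying trivial $G_{\mathfrak{F}_\infty}$-action; pass to the limit via the exact sequence whose correction term $R^1\varprojlim_U H^0(\mathfrak{F}_\infty,T\otimes_{o_L}o_L[\Upsilon/U])$ vanishes by Assumption \ref{assump}; and then invert $p$. Your explicit check of $\Upsilon$-equivariance is a detail the paper leaves implicit.
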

\begin{proof}
For $U$ running through the open subgroups of $\Upsilon$ we obtain
   \begin{align*}
     H^1(\mathfrak{F}_\infty,T)\hat{\otimes}_{o_L}\mathbb{T}_\Upsilon & \cong H^1(\mathfrak{F}_\infty,T)\hat{\otimes}_{o_L}\varprojlim_U(\mathbb{T}_\Upsilon)_U\\
       & \cong \varprojlim_U \left(H^1(\mathfrak{F}_\infty,T)\hat{\otimes}_{o_L}(\mathbb{T}_\Upsilon)_U\right)\\
       & \cong \varprojlim_U \left(H^1(\mathfrak{F}_\infty,T){\otimes}_{o_L}(\mathbb{T}_\Upsilon)_U\right)\\
       & \cong \varprojlim_U H^1(\mathfrak{F}_\infty,T{\otimes}_{o_L}(\mathbb{T}_\Upsilon)_U),
   \end{align*}
where we use for the second isomorphism that completed tensor products commute with projective limits and for the third isomorphism that the completed tensor product coincides with the usual one if one of the factors is finitely generated. Since $(\mathbb{T}_\Upsilon)_U\cong o_L[\Upsilon/U] $ is a finite free $o_L$-module with trivial $G_{\mathfrak{F}_\infty}$-action, the tensor product commutes with Galois cohomology. By an analogous reasoning as in the proof of \cite[Lem.\ 5.8]{SV15} we obtain a short exact sequence
\begin{multline*}
    0 \longrightarrow R^1 \varprojlim_U H^{0}_{}(\mathfrak{F}_\infty, T{\otimes}_{o_L}(\mathbb{T}_\Upsilon)_U) \longrightarrow \\
    H^1_{}(\mathfrak{F}_\infty, T{\otimes}_{o_L}\mathbb{T}_\Upsilon) \longrightarrow \varprojlim_U H^1_{}(\mathfrak{F}_\infty, T{\otimes}_{o_L}(\mathbb{T}_\Upsilon)_U) \longrightarrow 0  .
\end{multline*}
Since $ H^{0}_{}(\mathfrak{F}_\infty, T{\otimes}_{o_L}(\mathbb{T}_\Upsilon)_U)= H^{0}_{}(\mathfrak{F}_\infty, T){\otimes}_{o_L}(\mathbb{T}_\Upsilon)_U=0$ for all $U$ by Assumption \ref{assump}, we thus obtain an isomorphism
\[H^1(\mathfrak{F}_\infty,T)\hat{\otimes}_{o_L}\mathbb{T}_\Upsilon\cong H^1_{}(\mathfrak{F}_\infty, T{\otimes}_{o_L}\mathbb{T}_\Upsilon) ,\]
from which the claim follows by inverting $p.$
\end{proof}
Using the identification
\begin{equation}\label{f:DdR}
   D_{\dR,\mathfrak{F}_\infty}(V)\cong D_{\dR,F}(V)\otimes_{o_F}\widehat{o_{\mathfrak{F}_\infty}}
\end{equation}
and Lemma \ref{lem:completedbasechange}, we can rewrite \eqref{f:basechanged} as
\begin{equation}\label{f:basechangedII}
    \exp_{\mathfrak{F}_\infty,V}\hat{\otimes}_{o_L}\mathbb{T}_\Upsilon:  D_{\dR,F}(V)\otimes_{o_F}\widehat{o_{\mathfrak{F}_\infty}} \hat{\otimes}_{o_L}\mathbb{T}_\Upsilon\to  H^1(\mathfrak{F}_\infty, \mathbb{T}_\Upsilon(V)).
\end{equation}
Finally, we take $\Upsilon$-invariants and obtain
\begin{equation}\label{f:defExp}
  {\mathbb{E}\mathrm{xp}}_{\mathfrak{F}_\infty/F,V}:=( \exp_{\mathfrak{F}_\infty,V}\hat{\otimes}_{o_L}\mathbb{T}_\Upsilon)^{\Upsilon}:D_{\dR,F}(V)\otimes_{o_F}\left(\widehat{o_{\mathfrak{F}_\infty}} \hat{\otimes}_{o_L}\mathbb{T}_\Upsilon \right)^{\Upsilon} \to  \left(H^1(\mathfrak{F}_\infty, \mathbb{T}_\Upsilon(V))\right)^{\Upsilon} .
\end{equation}
Although continuous cohomology does not, in general, form a derived functor and there is, in general, no Hochschild-Serre spectral sequence, one has  by \cite[Thm.\ 10.25]{Th1} a short exact sequence in low degrees
\begin{equation}\label{f:HS}
   \xymatrix@C=0.5cm{
    0 \ar[r] & \H^1(\Upsilon,\H^0(\mathfrak{F}_\infty,\mathbb{T}_\Upsilon(V))) \ar[r]^{ } &  H^1(F, \mathbb{T}_\Upsilon(V))  \ar[r]^{ } & \left(H^1(\mathfrak{F}_\infty, \mathbb{T}_\Upsilon(V))\right)^{\Upsilon} \ar[r] & 0, }
\end{equation}
because $ \H^2(\Upsilon,\H^0(\mathfrak{F}_\infty,\mathbb{T}_\Upsilon(V)))$ vanishes as the $p$-cohomological dimension of $\Upsilon$ equal $1$. Since $\H^0(\mathfrak{F}_\infty,\mathbb{T}_\Upsilon(V))\cong \H^0(\mathfrak{F}_\infty,V)\hat{\otimes}_{o_L}\mathbb{T}_\Upsilon=0$, we may and do identify
$$H^1(F, \mathbb{T}_\Upsilon(V)) \cong \left(H^1(\mathfrak{F}_\infty, \mathbb{T}_\Upsilon(V))\right)^{\Upsilon}.$$
Together with Lemma \ref{lem:Ups-cohomology}(ii) we thus arrive at
\begin{equation}\label{f:defExpfinal}
  {\mathbb{E}\mathrm{xp}}_{\mathfrak{F}_\infty/F,V}:D_{\dR,F}(V)\otimes_{o_F} S_{\mathfrak{F}_\infty/F} \to  H^1(F, \mathbb{T}_\Upsilon(V))\cong \HIW^1(\mathfrak{F}_\infty/F, V).
\end{equation}

Now let $\varpi: G_F \twoheadrightarrow \Upsilon \to o_E^\times$ be an (unramified) character with $E$ some finite extension of $F$. By abuse of notation we shall also denote the induced $o_L$-algebra homomorphism $R\to E$ by $\varpi.$ The next lemma describes the behaviour of the Yager-module under specialization.
\begin{lemma}\label{lem:varpidescent} We have canonical isomorphisms
\begin{enumerate}
  \item $\mathbb{T}_\Upsilon \otimes_{\Lambda_{o_L}(\Upsilon),\varpi}E\cong   E(\varpi^{-1}) $ and $\mathbb{T}_\Upsilon(V)\otimes_{R,\varpi}E\cong V\otimes_L E(\varpi^{-1})=:V(\varpi^{-1})$ as $(G_F,E)$-modules,
  \item $D_{\dR,F}(E(\varpi^{-1}))=(\widehat{o_{\mathfrak{F}_\infty}} {\otimes}_{o_L} E(\varpi^{-1}))^{\Upsilon}$,
  \item  $  S_{\mathfrak{F}_\infty/F} \otimes_{\Lambda_{o_L}(\Upsilon),\varpi}E\cong  D_{\dR,F}(E(\varpi^{-1})) $, and
  \item $\left(D_{\dR,F}(V)\otimes_{o_F} S_{\mathfrak{F}_\infty/F} \right)\otimes_{R,\varpi}E\cong  D_{\dR,F}(V(\varpi^{-1})) $.
\end{enumerate}
\end{lemma}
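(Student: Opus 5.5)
The plan is to prove the four items in order, each one feeding into the next.

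For (i), I would start from the definition of $\mathbb{T}_\Upsilon = \Lambda_{o_L}(\Upsilon)$. Its $G_F$-action is through $\kappa_\Upsilon$, so $g$ acts by multiplication with $[\bar g^{-1}]$. Tensoring along $\varpi\colon\Lambda_{o_L}(\Upsilon)\to E$ sends $1\otimes 1$ to a generator of a rank one $E$-module. On this generator $g$ acts by $\varpi(\bar g^{-1})=\varpi^{-1}(g)$, which gives $E(\varpi^{-1})$. Tensoring with $V$ over $L$ then gives the second isomorphism, since the diagonal action is compatible and $\mathbb{T}_\Upsilon(V)\otimes_{R,\varpi}E = V\otimes_L(\mathbb{T}_\Upsilon\otimes_{o_L}L\otimes_{R,\varpi}E)$.

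For (ii), I would use that $E(\varpi^{-1})$ is unramified, so $G_{\mathfrak{F}_\infty}$ acts trivially on it. Hence $D_{\dR,F}(E(\varpi^{-1}))=(B_{\dR}^{G_{\mathfrak{F}_\infty}}\otimes_{L}E(\varpi^{-1}))^{\Upsilon}$, where the tensor is over $L$ on the $\id$-component. This is the same convention as in \eqref{f:DdR}; if one works over $\Qp$ instead, the argument is the same componentwise. By Ax–Sen–Tate, $B_{\dR}^{G_{\mathfrak{F}_\infty}}=\widehat{\mathfrak{F}_\infty}$. So the invariants equal $(\widehat{\mathfrak{F}_\infty}\otimes_L E(\varpi^{-1}))^\Upsilon$. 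Since this is an $E$-vector space, it equals $(\widehat{o_{\mathfrak{F}_\infty}}\otimes_{o_L}E(\varpi^{-1}))^\Upsilon$, with $E=o_E[1/p]$.

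Item (iii) is the main point. I would use Lemma \ref{lem:Ups-cohomology}(ii): $S_{\mathfrak{F}_\infty/F}=H^0(\Upsilon,\widehat{o_{\mathfrak{F}_\infty}}\hat\otimes_{o_L}\mathbb{T}_\Upsilon)$, and this is free of rank one over $\Lambda_{o_F}(\Upsilon)$. Specialization $-\otimes_{\Lambda,\varpi}E$ gives a natural map into $H^0(\Upsilon,\widehat{o_{\mathfrak{F}_\infty}}\otimes_{o_L}E(\varpi^{-1}))$, using (i) for the coefficient module. I would show this map is an isomorphism by two routes.

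\emph{Route one.} Use the last identity of Lemma \ref{lem:Ups-cohomology}(ii), $\widehat{o_{\mathfrak{F}_\infty}}\hat\otimes_{o_F}S_{\mathfrak{F}_\infty/F}=\widehat{o_{\mathfrak{F}_\infty}}\hat\otimes_{o_L}\mathbb{T}_\Upsilon$, and specialize both sides. This gives $\widehat{o_{\mathfrak{F}_\infty}}\otimes_{o_F}(S\otimes_\varpi E)\cong \widehat{o_{\mathfrak{F}_\infty}}\otimes_{o_L}E(\varpi^{-1})$, $\Upsilon$-equivariantly with trivial action on $S\otimes_\varpi E$. Taking $\Upsilon$-invariants and using \eqref{f:Frob} (the invariants of $\widehat{o_{\mathfrak{F}_\infty}}$ are $o_F$) yields the claim after inverting $p$. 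Here I must check that $S\otimes_\varpi E$ is free over $o_F\otimes_{o_L} E$, which is where the rank one freeness enters.

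\emph{Route two.} Alternatively, use the vanishing of $H^1(\Upsilon,\widehat{o_{\mathfrak{F}_\infty}}\hat\otimes\mathbb{T}_\Upsilon)$. The kernel of $\varpi$ on $\Lambda_{o_L}(\Upsilon)\otimes E$ is generated by elements $[u]-\varpi(u)$, which are regular on the free module. Taking the long exact sequence of $\Upsilon$-cohomology then gives surjectivity onto the invariants.

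The obstacle is this exactness and freeness bookkeeping. In the finite $\Upsilon$ case the lemma's hypothesis fails, and I would instead argue directly with $S_{F'/F}$ and a normal basis.

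Item (iv) then follows formally. $D_{\dR,F}(V)$ is a finite $F$-module with trivial $\Upsilon$-involvement, so $(D_{\dR,F}(V)\otimes_{o_F}S)\otimes_{R,\varpi}E\cong D_{\dR,F}(V)\otimes_{F}(S\otimes_\varpi E)$. By (iii) this is $D_{\dR,F}(V)\otimes_F D_{\dR,F}(E(\varpi^{-1}))$. The last step is the standard isomorphism $D_{\dR,F}(V)\otimes D_{\dR,F}(E(\varpi^{-1}))\cong D_{\dR,F}(V(\varpi^{-1}))$. It holds because $E(\varpi^{-1})$ is de Rham, indeed crystalline with its period in $\widehat{\mathfrak{F}_\infty}$, and dimensions match.
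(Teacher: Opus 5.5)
Your proposal is correct. Items (i), (ii) and (iv) are handled as in the paper. Your Route two for (iii) is precisely the paper's argument: take $\Upsilon$-invariants of
\[0\to\widehat{o_{\mathfrak{F}_\infty}}\hat\otimes_{o_L}\mathbb{T}_\Upsilon\xrightarrow{\ [u]-\varpi(u)\ }\widehat{o_{\mathfrak{F}_\infty}}\hat\otimes_{o_L}\mathbb{T}_\Upsilon\longrightarrow\widehat{o_{\mathfrak{F}_\infty}}\otimes_{o_L}E(\varpi^{-1})\to 0\]
and use $H^1(\Upsilon,\widehat{o_{\mathfrak{F}_\infty}}\hat\otimes_{o_L}\mathbb{T}_\Upsilon)=0$ from Lemma \ref{lem:Ups-cohomology}(ii). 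The paper only extracts surjectivity from this. It then proves injectivity of $S_{\mathfrak{F}_\infty/F}\otimes_{\Lambda_{o_L}(\Upsilon),\varpi}E\to D_{\dR,F}(E(\varpi^{-1}))$ by a dimension count. Your long exact sequence gives injectivity directly, since the kernel on invariants is $([u]-\varpi(u))S_{\mathfrak{F}_\infty/F}$. For this to be a single short exact sequence, choose $u$ to be a topological generator of all of $\Upsilon$, not just of its pro-$p$ part; this is possible because $\Upsilon$ is procyclic, being a quotient of $\widehat{\mathbb{Z}}$. Then the kernel of $\varpi$ on $\Lambda_{o_E}(\Upsilon)$ is principal.

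Your Route one is a genuinely different, descent-style argument. You specialize $\widehat{o_{\mathfrak{F}_\infty}}\hat\otimes_{o_F}S_{\mathfrak{F}_\infty/F}=\widehat{o_{\mathfrak{F}_\infty}}\hat\otimes_{o_L}\mathbb{T}_\Upsilon$ and take invariants using $\widehat{\mathfrak{F}_\infty}^{\,\Upsilon}=F$. This produces the isomorphism in one stroke, without the $H^1$-vanishing and without counting dimensions. Its inputs are the freeness of $S_{\mathfrak{F}_\infty/F}$ over $\Lambda_{o_F}(\Upsilon)$, which the paper's dimension count needs as well, and the existence of $\Upsilon$-invariant bases that underlies the last identity of Lemma \ref{lem:Ups-cohomology}(ii). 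The paper's route instead reuses the cohomological vanishing it relies on elsewhere in that section.

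Your remark that (ii), and the dimension match in (iii), only hold for the $\id$-component $(B_{\dR}\otimes_L-)^{G_F}$ is also correct; the paper leaves this implicit.
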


\begin{proof}
 The first assertion is easy to check. For (ii) and (iii), we assume without loss of generality that $E=F$ and take $\Upsilon$-invariants of the exact sequence
 %\footnote{\color{red} Do we need for the surjectivity $H^1(\Upsilon,\widehat{o_{\mathfrak{F}_\infty}} \hat{\otimes}_{o_L}\Lambda_{o_L}(\Upsilon))=0$??? The isomorphism follows by dimension reasons!}
 \[0\to \widehat{o_{\mathfrak{F}_\infty}} \hat{\otimes}_{o_L}\mathbb{T}_\Upsilon \xrightarrow{\id\otimes \left([\varphi_{q'}]-\varpi(\varphi_{q'})\right)}\widehat{o_{\mathfrak{F}_\infty}} \hat{\otimes}_{o_L}\mathbb{T}_\Upsilon \xrightarrow{\id\otimes\varpi}     \widehat{o_{\mathfrak{F}_\infty}} {\otimes}_{o_L} E(\varpi^{-1})\to 0 \]
 with
 \[\widehat{o_{\mathfrak{F}_\infty}} \hat{\otimes}_{o_L}\mathbb{T}_\Upsilon \otimes_{\Lambda_{o_L}(\Upsilon)}\Lambda_{o_L}(\Upsilon)\xrightarrow{\id\otimes\varpi} \widehat{o_{\mathfrak{F}_\infty}} \hat{\otimes}_{o_L}\mathbb{T}_\Upsilon \otimes_{\Lambda_{o_L}(\Upsilon),\varpi}E  \cong  \widehat{o_{\mathfrak{F}_\infty}} {\otimes}_{o_L} E(\varpi^{-1}) ,\] which - by Lemma \ref{lem:Ups-cohomology} (ii) - provides a surjection
 \[S_{\mathfrak{F}_\infty/F} \twoheadrightarrow  D_{\dR,F}(E(\varpi^{-1}))\]
 upon noting that $D_{\dR,F}(E(\varpi^{-1}))=(\widehat{o_{\mathfrak{F}_\infty}} {\otimes}_{o_L} E(\varpi^{-1}))^{\Upsilon}$, because $\varpi^{-1}$ is unramified. The induced surjection
 \[S_{\mathfrak{F}_\infty/F}\otimes_{\Lambda_{o_L}(\Upsilon),\varpi}E\twoheadrightarrow  D_{\dR,F}(E(\varpi^{-1}))\]
 is injective because source and target are $F$-vector spaces of the same dimension by Lemma \ref{lem:Ups-cohomology} (ii) again.
 For (iii) use the compatibility of $D_{\dR}$ with tensor products:
 \[D_{\dR,F}(V)\otimes_{F}D_{\dR,F}(E(\varpi^{-1}))\cong D_{\dR,F}(V(\varpi^{-1})). \]
\end{proof}

We denote the composite
\begin{multline*}
  D_{\dR,F}(V)\otimes_{o_F} S_{\mathfrak{F}_\infty/F} \cong\\
  \left(D_{\dR,F}(V)\otimes_{o_F} S_{\mathfrak{F}_\infty/F} \right)\otimes_{R}R\xrightarrow{\id\otimes\varpi}\left(D_{\dR,F}(V)\otimes_{o_F} S_{\mathfrak{F}_\infty/F} \right)\otimes_{R,\varpi}E\cong  D_{\dR,F}(V(\varpi^{-1}))
\end{multline*}
by $\varpi_*.$
Similarly, using the homomorphism
\[\id_{\mathbb{T}_\Upsilon(V)}\otimes\varpi:\mathbb{T}_\Upsilon(V)\cong\mathbb{T}_\Upsilon(V)\otimes_{R}R \to \mathbb{T}_\Upsilon(V)\otimes_{R,\varpi}E,\]
also on cohomology we denote by $\varpi_*$   the composite
\[ \HIW^1(\mathfrak{F}_\infty/F, V)\cong H^1(F,  \mathbb{T}_\Upsilon(V)) \xrightarrow{H^1(F,\id_{\mathbb{T}_\Upsilon(V)}\otimes\varpi)} H^1(F,  V(\varpi^{-1})).\]

Now, by the functoriality of the above construction we obtain a commutative diagram
\begin{equation}\label{f:ExpDescent}
   \xymatrix{
     D_{\dR,F}(V)\otimes_{o_F} S_{\mathfrak{F}_\infty/F} \ar[d]_{\varpi_*} \ar[r]^{{\mathbb{E}\mathrm{xp}}_{\mathfrak{F}_\infty/F,V}} & \HIW^1(\mathfrak{F}_\infty/F, V)\ar[d]^{\varpi_*} \\
    D_{\dR,F}(V(\varpi^{-1})) \ar[r]^{\exp_{F, V(\varpi^{-1})}} & H^1(F,  V(\varpi^{-1})),  }
\end{equation}
i.e., the big exponential map ${\mathbb{E}\mathrm{xp}}_{\mathfrak{F}_\infty/F,V}$ interpolates the Bloch-Kato exponential maps $\exp_{F, V(\varpi^{-1})}$ for all unramified characters.

 It is well known, that the cup product is compatible with specialization at a character, i.e., for all $\varpi$ as above we have a commutative diagram
 \begin{equation}\label{f:Iw-pairings}
\xymatrix{
H^1(F,V^*(1)(\varpi))\ar@{}[r]|{\times} &H^1(F,V(\varpi^{-1}))  \ar[rr]^-{\langle ,\rangle_{\Tate,L}} && H^2(F,E(1))\cong E  \\
    \HIW^1(\mathfrak{F}_\infty/F, V^*(1))\ar[u]_{(\varpi^{-1})_*} \ar@{}[r]|{\times} & \HIW^1(\mathfrak{F}_\infty/F, V)\ar[u]_{\varpi_* } \ar[rr]^-{\langle ,\rangle_{\Tate,R} } && R.\ar[u]_{ \varpi_*} }
\end{equation}

Similarly,   the usual $D_{\dR}$-pairing extends to a $R$-$\upiota_*$-sesquilinear %\com{\color{red} zur Zeit $\iota_*$-linear im ersten Argument, vielleicht muss das vertauscht werden!}
pairing satisfying  analogous properties:
 \small
\begin{equation}\label{f:dR-pairings}
\xymatrix@C=-2pt{
 D_{\dR,F}(V^*(1)(\varpi))&{\times} &  D_{\dR,F}(V(\varpi^{-1})) \ar[rrr]^-{[,]_{\dR}^{}} &&& D_{\dR,F}(E(1))\cong E\phantom{mmmmmmmm}  \\
   D_{\dR,F}(V^*(1))\otimes_{o_L} S_{\mathfrak{F}_\infty/F} \ar[u]_{(\varpi^{-1})_*} &{\times} &  D_{\dR,F}(V)\otimes_{o_F} S_{\mathfrak{F}_\infty/F}\ar[u]_{\varpi_* } \ar[rrr]^-{[,]^0}&&& D_{\dR,F}(L(1))\otimes_{o_L} \Lambda_{o_L}(\Upsilon)\cong R. \phantom{mmmmmmmmmmmmm} \ar[u]_{\varpi_*} }
\end{equation}\normalsize
where the bottom pairing arises as follows: Note that the free rank one $\Lambda_{o_L}(\Upsilon)^{\mathrm{op}}$-module $ S_{\mathfrak{F}_\infty/F}^*:=\Hom_{\Lambda_{o_L}(\Upsilon)}(S_{\mathfrak{F}_\infty/F},\Lambda_{o_L}(\Upsilon))$ can be identified with
\begin{equation}\label{f:YagerDual}
   \iota( S_{\mathfrak{F}_\infty/F})
=\{ \lambda\in  \widehat{o_{\mathfrak{F}_\infty}}\hat{\otimes}_{o_L}\Lambda_{o_L}(\Upsilon)\mid (u\otimes \id)(\lambda)=(\id\otimes [u^{-1}])\lambda\}
\end{equation}
for any topological generator $u$ of $\Upsilon.$ In particular, the duality is given by the multiplication
\[S_{\mathfrak{F}_\infty/F}^* \times S_{\mathfrak{F}_\infty/F}\mapsto \Lambda_{o_L}(\Upsilon).\]
As usual in Iwasawa theory we view it as $\Lambda_{o_L}(\Upsilon)$-module via the involution $\Lambda_{o_L}(\Upsilon) \xrightarrow[\iota]{\cong}\Lambda_{o_L}(\Upsilon)^{\mathrm{op}}.$
Then, $(S_{\mathfrak{F}_\infty/F}^*)^\iota$ is isomorphic to $ S_{\mathfrak{F}_\infty/F}$ and the bottom pairing in \eqref{f:dR-pairings} sends $(d'\otimes s', d\otimes s)$ to $[d',d]\otimes \iota(s')s.$

%{\color{red}
%In particular, the duality is given by the multiplication
%\[S_{\mathfrak{F}_\infty/F}  \times S_{\mathfrak{F}_\infty/F}\mapsto \Lambda_{o_L}(\Upsilon)\]
%and the bottom pairing in \eqref{f:dR-pairings} sends $(d'\otimes s', d\otimes s)$ to $[d',d]\otimes \iota(s')s.$
%}

%For the rest of this section we extend our standard assumption to\\
%
%{\bf Assumption:} $H^0(G_{\mathfrak{F}_\infty},T)=0=H^0(G_{\mathfrak{F}_\infty},T^*(1))$.\\

Upon noting that the bottom pairing  of
%\eqref{f:Iw-pairings} (by Proposition \ref{prop:Iw-perfectDual} and the Assumption) and
\eqref{f:dR-pairings} is non-degenerate,   we can now define the big dual exponential map ${\mathbb{E}\mathrm{xp}}^*_{\mathfrak{F}_\infty/F,V^*(1)}$ as adjoint of ${\mathbb{E}\mathrm{xp}}_{\mathfrak{F}_\infty/F,V}$, i.e., via the commutativity of the following diagram
\begin{equation}\label{f:adjointness}
   \xymatrix{
D_{\dR,F}(V^*(1))\otimes_{o_L} S_{\mathfrak{F}_\infty/F}  \ar@{}[r]|{\times} &  D_{\dR,F}(V)\otimes_{o_F} S_{\mathfrak{F}_\infty/F}  \ar[d]^{{\mathbb{E}\mathrm{xp}}_{\mathfrak{F}_\infty/F,V}} \ar[rr]^-{[,]^0}&& R   \\
    \HIW^1(\mathfrak{F}_\infty/F, V^*(1))\ar@{-->}[u]_{{\mathbb{E}\mathrm{xp}}^*_{\mathfrak{F}_\infty/F,V^*(1)}} \ar@{}[r]|{\times} & \HIW^1(\mathfrak{F}_\infty/F, V)   \ar[rr]^-{\langle ,\rangle_{\Tate,R} } && R.\ar@{=}[u]_{  }}
\end{equation}
In other words, ${\mathbb{E}\mathrm{xp}}^*_{\mathfrak{F}_\infty/F,V^*(1)}$  is the composite\footnote{Note that we do not need that the first map is an isomorphism, i.e.,  perfectness of the Tate pairing for Iwasawa cohomology is not required.}
\begin{multline*}
  \HIW^1(\mathfrak{F}_\infty/F, V^*(1))\xrightarrow{x\mapsto \langle x,-\rangle} \HIW^1(\mathfrak{F}_\infty/F, V)^* \xrightarrow{({\mathbb{E}\mathrm{xp}}_{\mathfrak{F}_\infty/F,V})^*}\\
 (D_{\dR,F}(V)\otimes_{o_F} S_{\mathfrak{F}_\infty/F}  )^* \xleftarrow[{\cong}]{d\mapsto [d,-]^0}D_{\dR,F}(V^*(1))\otimes_{o_L} S_{\mathfrak{F}_\infty/F}.
\end{multline*}
%in which the exterior maps are isomorphisms.
From \eqref{f:ExpDescent}, \eqref{f:adjointness}, \eqref{f:Iw-pairings} and \eqref{f:dR-pairings} we obtain the following specialization property
\begin{equation}\label{f:dualExpDescent}
   \xymatrix{
      \HIW^1(\mathfrak{F}_\infty/F, V^*(1))\ar[d]^{(\varpi^{-1})_*} \ar[rrr]^{{\mathbb{E}\mathrm{xp}}^*_{\mathfrak{F}_\infty/F,V^*(1)}} &&& D_{\dR,F}(V^*(1))\otimes_{o_L} S_{\mathfrak{F}_\infty/F} \ar[d]^{(\varpi^{-1})_*}\\
   H^1(F,V^*(1)(\varpi))  \ar[rrr]^{\exp^*_{F, V^*(1)(\varpi)}} &&& D_{\dR,F}(V^*(1)(\varpi))  ,  }
\end{equation}
i.e., the big dual exponential map ${\mathbb{E}\mathrm{xp}}^*_{\mathfrak{F}_\infty/F,V^*(1)}$ interpolates the dual Bloch-Kato exponential maps. Indeed, the mentioned compatibilities imply that for fixed $x\in \HIW^1(\mathfrak{F}_\infty/F, V^*(1))$ and for all $d\in  D_{\dR,F}(V)\otimes_{o_F} S_{\mathfrak{F}_\infty/F}$
\begin{align*}
  [(\varpi^{-1})_*\circ  {\mathbb{E}\mathrm{xp}}^*_{\mathfrak{F}_\infty/F,V^*(1)}(x),\varpi_*(d)]_{dR}& =\varpi_*\left([  {\mathbb{E}\mathrm{xp}}^*_{\mathfrak{F}_\infty/F,V^*(1)}(x),d ]^0 \right) \\
    & =\varpi_*\left(\langle  x,{\mathbb{E}\mathrm{xp}}_{\mathfrak{F}_\infty/F,V}(d) \rangle_{\Tate,R} \right)  \\
    &  =\langle (\varpi^{-1})_* (x),\varpi_*({\mathbb{E}\mathrm{xp}}_{\mathfrak{F}_\infty/F,V}(d)) \rangle_{\Tate,E} \\
     &  =\langle (\varpi^{-1})_* (x),\exp_{F, V(\varpi^{-1})}(\varpi_*(d)) \rangle_{\Tate,E} \\
      &  =\langle \exp^*_{F, V^*(1)((\varpi)}(\varpi^{-1})_* (x)), \varpi_*(d) \rangle_{\Tate,E}, \\
\end{align*}
 where the last equality stems from Kato's characterization of the dual Bloch-Kato exponential map. As $\varpi_*(d)$ runs through all elements of $D_{\dR,F}(V(\varpi^{-1})), $ the commutativity of \eqref{f:dualExpDescent} follows.

Note that we have defined a big dual exponential map ${\mathbb{E}\mathrm{xp}}^*_{\mathfrak{F}_\infty/F,V }$ only under the condition that  $H^0(G_{\mathfrak{F}_\infty},V^*(1))$  vanishes. This condition grants the existence of ${\mathbb{E}\mathrm{xp}}_{\mathfrak{F}_\infty/F,V^*(1) }$ and we used the above pairings for the definition of its dual. Now we want to pursue a way of direct construction of the big dual exponential map which is closer to Kato's definition of  $\exp^*_{F, V}$ rather than its property how it is related to the literal dual of  $\exp_{F, V^*(1)}$. Henceforth we stick literally to our above Assumption \ref{assump} when constructing a second big dual exponential map ${\mathfrak{E}\mathrm{xp}}^*_{\mathfrak{F}_\infty/F,V }$, but  note that it is the dual assumption as for ${\mathbb{E}\mathrm{xp}}^*_{\mathfrak{F}_\infty/F,V }$.

We still assume, in addition,   that $V$ is de Rham and we consider now the dual Bloch-Kato exponential map from \cite[\S 1.2.4]{Kat}
\begin{equation}\label{f:BKFinfty}
   \exp^*_{\mathfrak{F}_\infty,V}: H^1(\mathfrak{F}_\infty,V)\to H^1(\mathfrak{F}_\infty,V\otimes_{\qp} B^+_\dR) \xleftarrow[\cong]{-\cup\log(\chi_{\rm cyc})} H^0(\mathfrak{F}_\infty,V\otimes_{\qp} B^+_\dR)=D^0_{\dR,\mathfrak{F}_\infty}(V),
\end{equation}
where $\log(\chi_{\rm cyc}):G_{\mathfrak{F}_\infty}\to\zp $ is considered as $1$-cocycle in $H^1(\mathfrak{F}_\infty,\zp).$
Completed   base change   with respect to $\mathbb{T}_\Upsilon$ gives
\begin{multline}\label{f:basechangeddual}
  \exp^*_{\mathfrak{F}_\infty,V}\hat{\otimes}_{o_L}\mathbb{T}_\Upsilon: H^1(\mathfrak{F}_\infty,V)\hat{\otimes}_{o_L}\mathbb{T}_\Upsilon\to H^1(\mathfrak{F}_\infty,V\otimes_{\qp} B^+_\dR)\hat{\otimes}_{o_L}\mathbb{T}_\Upsilon \\ \xleftarrow[\cong]{(-\cup\log\chi_{\rm cyc})\hat{\otimes}_{o_L}\mathbb{T}_\Upsilon } D^0_{\dR,\mathfrak{F}_\infty}(V)\hat{\otimes}_{o_L}\mathbb{T}_\Upsilon.
\end{multline}
By Lemma \ref{lem:completedbasechange} we can identify $H^1(\mathfrak{F}_\infty,V)\hat{\otimes}_{o_L}\mathbb{T}_\Upsilon= H^1(\mathfrak{F}_\infty,V{\otimes}_{o_L}\mathbb{T}_\Upsilon)$. The same arguments show the analogous statement
\[H^1(\mathfrak{F}_\infty,V\otimes_{\qp} B^+_\dR)\hat{\otimes}_{o_L}\mathbb{T}_\Upsilon \cong H^1(\mathfrak{F}_\infty,V\otimes_{\qp} B^+_\dR{\otimes}_{o_L}\mathbb{T}_\Upsilon) \]
upon noting that this time the term
\[R^1 \varprojlim_U H^{0}_{}(\mathfrak{F}_\infty, V\otimes_{\qp} B^+_\dR{\otimes}_{o_L}(\mathbb{T}_\Upsilon)_U)\cong R^1 (\varprojlim_U D_{\dR,\mathfrak{F}_\infty}(V) {\otimes}_{o_L}(\mathbb{T}_\Upsilon)_U)\]
vanishes, because the transitions map are visibly surjective. Taking $\Upsilon$-invariants together with \eqref{f:DdR} and \eqref{f:Sinftyproperty} we thus arrive at
\begin{multline}\label{f:defExpfinal}
  {\mathfrak{E}\mathrm{xp}}^\ast_{\mathfrak{F}_\infty/F,V}: \HIW^1(\mathfrak{F}_\infty/F, V)\cong  H^1(F, \mathbb{T}_\Upsilon(V))\to H^1(F, \mathbb{T}_\Upsilon(V\otimes_{\qp} B^+_\dR))\\ \xleftarrow[\cong]{\left((-\cup\log\chi_{\rm cyc})\hat{\otimes}_{o_L}\mathbb{T}_\Upsilon \right)^\Upsilon} D^0_{\dR,F}(V)\otimes_{o_F} S_{\mathfrak{F}_\infty/F}   ,
\end{multline}
where we again use \eqref{f:HS} for the first two terms upon noting that
\begin{align*}
   H^1(\Upsilon,H^0(\mathfrak{F}_\infty,V\otimes_{\qp} B^+_\dR{\otimes}_{o_L}\mathbb{T}_\Upsilon)) & \cong H^1(\Upsilon, D^0_{\dR,\mathfrak{F}_\infty}(V)\hat{\otimes}_{o_L}\mathbb{T}_\Upsilon) \\
    & \cong H^1(\Upsilon, D^0_{\dR,F}(V){\otimes}_{o_F} \widehat{o_{\mathfrak{F}_\infty}} \hat{\otimes}_{o_L}\mathbb{T}_\Upsilon)=0
  \end{align*}
by Lemma \ref{lem:Ups-cohomology}(ii). By the functoriality of cohomology and in particular of the cup-product we obtain immediately the commutative descent diagram
\begin{equation}\label{f:dualExpDescent}
   \xymatrix{
      \HIW^1(\mathfrak{F}_\infty/F, V )\ar[d]^{(\varpi^{-1})_*} \ar[rrr]^{{\mathfrak{E}\mathrm{xp}}^*_{\mathfrak{F}_\infty/F,V }} &&& D_{\dR,F}(V )\otimes_{o_L} S_{\mathfrak{F}_\infty/F} \ar[d]^{(\varpi^{-1})_*}\\
   H^1(F,V (\varpi))  \ar[rrr]^{\exp^*_{F, V (\varpi)}} &&& D_{\dR,F}(V (\varpi))  ,  }
\end{equation}
for any character $\varpi$ of $\Upsilon.$ In case that $H^0(G_{\mathfrak{F}_\infty},V)$ and $H^0(G_{\mathfrak{F}_\infty},V^*(1))$ both vanish one easily checks that ${\mathfrak{E}\mathrm{xp}}^*_{\mathfrak{F}_\infty/F,V }$ and ${\mathbb{E}\mathrm{xp}}^*_{\mathfrak{F}_\infty/F,V }$ coincide as they satisfy the same interpolation property.

{\bf Convention:} In order to simplify we henceforth we also use the notation ${\mathbb{E}\mathrm{xp}}^*_{\mathfrak{F}_\infty/F,V }$ for ${\mathfrak{E}\mathrm{xp}}^*_{\mathfrak{F}_\infty/F,V }$, i.e., with this convention ${\mathbb{E}\mathrm{xp}}^*_{\mathfrak{F}_\infty/F,V }$ whenever   $H^0(G_{\mathfrak{F}_\infty},V)$ or $H^0(G_{\mathfrak{F}_\infty},V^*(1))$ vanishes.

Moreover, one could be tempted to try to use an analogue of diagram \eqref{f:adjointness} to define a big exponential map ${\mathfrak{E}\mathrm{xp}}_{\mathfrak{F}_\infty/F,V^*(1) }$ under our standard Assumption \ref{assump}. But unfortunately, for this one would require an isomorphism
\[ \HIW^1(\mathfrak{F}_\infty/F, V^*(1))\xrightarrow[\cong]{x\mapsto \langle x,-\rangle} \HIW^1(\mathfrak{F}_\infty/F, V)^* \]
 which is only granted by Proposition \ref{prop:Iw-perfectDual}, if also $H^0(G_{\mathfrak{F}_\infty},V^*(1))$ vanishes, i.e., in the case when ${\mathbb{E}\mathrm{xp}}_{\mathfrak{F}_\infty/F,V^*(1) }$ exists anyway. Only, if one is willing to forget the $\Lambda_{o_L}(\Upsilon)$-torsion in $\HIW^1(\mathfrak{F}_\infty/F, V^*(1))$ one can construct ${\mathfrak{E}\mathrm{xp}}_{\mathfrak{F}_\infty/F,V^*(1) }$ in new cases.

\com{\color{red} formulate versions for the id-components!}

{\bf Question:} This approach is based on the assumption that $H^0(G_{\mathfrak{F}_\infty},V)$  (or $H^0(G_{\mathfrak{F}_\infty},V^*(1))$) vanishes. One might want to consider also the other extreme that $H^0(G_{\mathfrak{F}_\infty},V)=V$ (or $H^0(G_{\mathfrak{F}_\infty},V^*(1))=V^*(1)$) or the general case without any such condition. Does there also exist a reasonable big (dual) exponential map in those cases?

\subsection{Colmez' reciprocity law}\label{sec:Colmez}

The purpose of this subsection is to improve the following theorem from \cite[Thm.\ 3.2.1]{SanoV}. We assume $F=L$ throughout this subsection.

Let   $T$ be in $\Rep_{o_L,f}^{\cris}(G_L)$    such that $T(\tau^{-1})$ belongs to  $\Rep_{o_L,f}^{\cris,\mathrm{an}}(G_L)$ with all   Hodge--Tate weights   in $[1,r],$ and such that $V:=L\otimes_{o_L} T$ does not have any quotient isomorphic to $L(\tau).$ Then the assumptions of \cite[Lem.\ 3.3.6]{SV24}  are satisfied and we may define the  maps
$  \varphi_L^{-m}:D_{\LT}(T(\tau^{-1}))^{\psi_L=1}\to L_m[[t_{\rm LT}]]\otimes_L D_{\cris,L}(V(\tau^{-1})) $   via the  inclusions
%\[(\mathbf{A}_L(\chi_{\rm LT}^{1-j}))^{\psi=1}=(\omega_{\rm LT}^{j-2}\mathbf{A}_L^+(\chi_{\rm LT}^{1-j}))^{\psi=1}\]
 - by \cite[(3.16), Cor.\ 3.1.14]{SV24} involving  Kisin's and Ren's
comparison isomorphism -,
\begin{equation}\label{f:compWachDcris}
    D_{\LT}(T(\tau^{-1}))^{\psi_L=1}=(N(T(\tau^{-1})))^{\psi=1}\subseteq N(T(\tau^{-1}))\subseteq \cR_L^+\otimes_L D_{\cris,L}(V(\tau^{-1}))
\end{equation}
and  the map $\varphi_q^{-m}$ defined before \cite[Lem.\ 2.4.3]{BF}.

\begin{theorem}[Colmez' reciprocity law in the Lubin-Tate setting]\label{thm:Colmez} In the above setting assume in addition  that $V$ satisfies $D_{\cris,L}(V(\tau^{-1}))^{\varphi_L=1}=D_{\cris,L}(V(\tau^{-1}))^{\varphi_L=\pi_L^{-1}}=0$. Then, for all $j\geq 1$   such that the operators  $1-\pi_L^{-1-j}\varphi_L^{-1}, 1-\frac{\pi_L^{j+1}}{q}\varphi_L $ are invertible on $D_{\cris,L}(V(\tau^{-1}))$, the following diagram commutes:
\begin{equation}\label{f:interpolationgeneral}
 \xymatrix{
   H^1_{\Iw}(L_\infty{/L},T) \ar[dd]_{\mathrm{pr}_{L_\infty/L_m}^{-j}} \ar[rr]^-{\mathrm{Exp}_{L_\infty/L,T}^*} &&  D_{\LT}(T(\tau^{-1}))^{\psi_L=1}  \ar[d]^{\pi_L^{-m}\varphi_L^{-m}\otimes \mathbf{d}_1}   \\
   &&   L_m((t_{\rm LT}))\otimes_L D_{\cris,L}(V) \ar[d]^{c_{t_{\rm LT}^{j}}\otimes (\id\otimes e_{-j})} \\
   H^1(L_m,V(\chi_\mathrm{\rm LT}^{-j}))\ar[rr]^-{\exp_{L_m,V(\chi_\LT^{-j})}^*}&&   L_m \otimes_L D_{\cris,L}(V(\chi_\LT^{-j})).  }
\end{equation}
In other words, in $L_m((t_\LT))\otimes_L D_{\mathrm{dR},L}(V) \subseteq B_\dR \otimes_L V$ we have
\[\pi_L^{-m}\varphi_L^{-m}(\mathrm{Exp}^*)\otimes \mathbf{d}_1=\sum_{j\geq 1 } \exp^*_{L_m,V(\chi_\LT^{-j})}\circ \mathrm{pr}_{L_\infty/L_m}^{-j}=\sum_{j\in\mathbb{Z} }\exp^*_{L_m,V(\chi_\LT^{-j})}\circ \mathrm{pr}_{L_\infty/L_m}^{-j},\]
if the above conditions hold for all $j\geq 1,$ i.e., if $\varphi_L$ acting on $D_{\cris,L}(V(\tau^{-1}))$ does not have any eigenvalue in $\pi_L^{\mathbb{N}_0}\cup q\pi_L^{\mathbb{N}_0};$ to this end we identify the target $L_m \otimes_L D_{\cris,L}(V(\chi_\LT^{-j}))$ with $L_mt_\LT^{j}\otimes_L D_{\cris,L}(V)\subseteq L_m((t_\LT))\otimes_L D_{\mathrm{dR},L}(V).$ Note that $c_{t_\LT^i}(\pi_L^{-m}\varphi_L^{-m}(\mathrm{Exp}^*)\otimes \mathbf{d}_1)=0$ for $i\leq 0$, because ${\rm Fil}^0 D_{\dR,L}(V)=0$ as the Hodge-Tate weights of $V$ are at least $1.$
\end{theorem}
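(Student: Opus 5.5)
This statement is recalled from \cite[Thm.\ 3.2.1]{SanoV}, so the plan is to re-run that argument. It reduces Colmez' reciprocity law to the interpolation formula for the regulator map, Theorem \ref{thm:adjointformula}, combined with the explicit description of $\mathbf{L}_V^{\Gamma_L}$ in \eqref{f:defregulator}.

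\emph{Step 1: rewrite the left side.} Put $f:=\mathrm{Exp}^*(y)\in N(T(\tau^{-1}))^{\psi=1}$. Using \eqref{f:compWachDcris}, view $f$ inside $\cR_L^+\otimes_L D_{\cris,L}(V(\tau^{-1}))$. Then $\pi_L^{-m}\varphi_L^{-m}(f)\otimes\mathbf d_1$ is the Taylor expansion of $f$ at the torsion point $\eta_m$, expressed in the variable $t_\LT$. Hence its $t_\LT^j$-coefficient is, up to $j!$ and powers of $\Omega$, the value at $\eta_m$ of $\partial_{\rm inv}^j f$, with the Frobenius twist $\varphi_L^{-m}$ acting on the $D_{\cris}$-part.

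\emph{Step 2: pass to the regulator.} Apply $1-\frac{\pi_L}{q}\varphi_L$ to get $g=(1-\frac{\pi_L}{q}\varphi_L)f\in\cO^{\psi=0}\otimes D_{\cris}$, and write $g=\mathfrak M(\mathbf{L}_V^{\Gamma_L}(y))$. Evaluating the distribution $\mathbf{L}_V^{\Gamma_L}(y)$ at the characters $\rho\chi_\LT^j$, where $\rho$ runs over characters of $G_m$, amounts to a Fourier transform over $G_m$ of the Taylor coefficients of $g$ at the points $\eta_k$, $k\le m$. Conversely, the coefficient $c_{t_\LT^j}$ of $\varphi_L^{-m}f$ is recovered from $g$ by inverting $1-\frac{\pi_L}{q}\varphi_L$. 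This is done via the relation $\psi f=f$ together with the trace relation between $L_m$ and $L_{m-1}$. This inversion is exactly where the Euler factors $(1-\pi_L^{-1-j}\varphi_L^{-1})^{-1}(1-\frac{\pi_L^{j+1}}{q}\varphi_L)$ enter, and it is why they are assumed invertible.

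\emph{Step 3: compare with the right side.} Theorem \ref{thm:adjointformula} identifies $\Omega^j\mathbf L_V^{\Gamma_L}(y)(\rho\chi_\LT^j)$ with $\widetilde{\exp}^*$ of the twisted projections, using Gauss sums $\tau(\rho)$ for $n>0$ and the Euler factor for $n=0$. Sum these over all $\rho$ of conductor $\le m$, weighted by orthogonality. The Gauss sum factors $\pi_L^{n(1+j)}\tau(\rho)^{-1}\varphi_L^n$ should cancel against the Fourier transform from Step 2, and the $n=0$ Euler factor should cancel against the inversion of $1-\frac{\pi_L}{q}\varphi_L$. What remains is $\exp^*_{L_m,V(\chi_\LT^{-j})}(\mathrm{pr}^{-j}_{L_\infty/L_m}y)$, after converting $\widetilde{\exp}^*$ back to $\exp^*$ by undoing $\mathbb T_{\tau^{-1}}$ and tensoring with $\mathbf d_1$ and $e_{-j}$.

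The hypotheses on $D_{\cris,L}(V(\tau^{-1}))^{\varphi_L=1,\pi_L^{-1}}$ are needed in two places: they make $\mathrm{Exp}^*$ land in the Wach module, and they make the hypotheses of Theorem \ref{thm:adjointformula} hold for the dual representation. The vanishing for $i\le 0$ is immediate from ${\rm Fil}^0D_{\dR,L}(V)=0$.

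The main obstacle will be the bookkeeping in Step 3: matching normalizations between the Mellin transform, the Gauss sums and $\varphi_L^{-m}$, and checking that the trivial-character Euler factor correctly accounts for the layers $k<m$ via $\psi f=f$. I would first verify the case $m=0$ and the case $m=1$, and then induct on $m$ using corestriction compatibility.
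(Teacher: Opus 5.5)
The paper does not prove this statement. It quotes it from \cite[Thm.\ 3.2.1]{SanoV}. Your reduction to Theorem \ref{thm:adjointformula} is sound and fits how the paper treats the result. The eigenvalue hypotheses on $V(\tau^{-1})$ are exactly those imposed on $V^*(1)$ in Theorem \ref{thm:adjointformula}: the pairing into $D_{\cris,L}(L(\chi_\LT))$, on which $\varphi_L=\pi_L^{-1}$, turns an eigenvalue $\alpha$ into $\pi_L^{-1}\alpha^{-1}$. The analyticity and $\mathrm{Fil}^{-1}$ conditions on $V^*(1)=V(\tau^{-1})^*(\chi_\LT)$ follow from those on $V(\tau^{-1})$. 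The Euler-factor conditions are an artifact of this route, which is why \S\ref{sec:Colmez} can remove them by unramified twisting. Concretely, the $t_\LT^j$-coefficient of $\pi_L^{-m}\varphi_L^{-m}(f)$ is $x_m:=\frac{\pi_L^{-m(1+j)}}{j!}\varphi_L^{-m}\big((\partial_{\rm inv}^jf)(\eta_m)\big)$. The normalizing powers are of $\pi_L$, not $\Omega$; $\Omega$ enters only via $\partial_{\rm inv}^j\mathfrak{M}(\mu)(0)=\Omega^j\mu(\chi_\LT^j)$. Finally, $x_m$ is determined by its $\rho$-resolvents over $G_m$.

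One correction to the bookkeeping you flagged: $m=0$ cannot be a base case, because the diagram does not commute there in general. Put $\partial=\partial_{\rm inv}$, let $\varphi$ denote $\varphi_L$ on $D_{\cris,L}(V(\tau^{-1}))$, and set $X:=\widetilde{\exp}^*_{L,V(\chi_\LT^{-j}),\id}(y_{\chi_\LT^{-j}})\otimes e_j$. The case $n=0$ of Theorem \ref{thm:adjointformula}, together with $(\partial^jg)(0)=(1-\frac{\pi_L^{j+1}}{q}\varphi)(\partial^jf)(0)$, gives $(\partial^jf)(0)=j!\,(1-\pi_L^{-1-j}\varphi^{-1})^{-1}X$. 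So inverting $1-\frac{\pi_L}{q}\varphi_L$ cancels only $1-\frac{\pi_L^{j+1}}{q}\varphi$, and the factor $(1-\pi_L^{-1-j}\varphi^{-1})^{-1}$ survives at level $0$.

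That factor disappears only at level $1$. Since $\varphi_L\circ\psi_L=\pi_L^{-1}\mathrm{Tr}$, the relation $\psi_Lf=f$ gives $\mathrm{Tr}_{L_1/L}\big((\partial^jf)(\eta_1)\big)=(\pi_L^{j+1}\varphi-1)(\partial^jf)(0)$. Hence $\mathrm{Tr}_{L_1/L}(x_1)=\frac{1}{j!}(1-\pi_L^{-1-j}\varphi^{-1})(\partial^jf)(0)=X$, which is the trivial-character part of the case $m=1$.

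For $m\geq 2$ the same relation gives $\mathrm{Tr}_{L_m/L_{m-1}}(x_m)=x_{m-1}$. Together with the corestriction compatibility of $\exp^*$, this handles all non-primitive $\rho$. For $\rho$ of conductor exactly $m$ you may replace $f$ by $g$. Indeed, $(\partial^jf)(\eta_m)-(\partial^jg)(\eta_m)=\frac{\pi_L^{j+1}}{q}\varphi\big((\partial^jf)(\eta_{m-1})\big)$ lies in $L_{m-1}\otimes_L D_{\cris,L}(V(\tau^{-1}))$, so its $\rho$-resolvent vanishes. What then remains is only the Gauss-sum comparison you describe. So start the induction at $m=1$; the theorem is meant for $m\geq 1$ (cf.\ ``Fix $m,j\geq 1$'' in the proof of Theorem \ref{thm:ColmzezImproved}).

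A minor point: $\mathrm{Exp}^*$ lands in $N(T(\tau^{-1}))$ because of the weight and no-$L(\tau)$-quotient hypotheses (\cite[Lem.\ 3.3.6]{SV24}), not because of the eigenvalue condition.
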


By construction of \cite[Lem.\ 6.6.1]{Ku} we have a commutative diagram, for $T$ a {\em good } crystalline representation of $G_L$, i.e., those satisfying the conditions of Theorem \ref{thm:Kupferer},\footnote{By an analogue of \cite[Prop.\ 4.5]{LZ} we do not need the assumption concerning trivial quotients, if $\mathfrak{F}_\infty/L$ is infinite!}
\begin{equation}\label{f:descentXtoL}
\xymatrix{
  H^1_{\mathrm{Iw}}(X_\infty/L,V) \ar[d]_{\mathrm{pr}_{X_\infty/L_\infty}^\varpi} \ar[rr]^-{\mathrm{Exp}^*_{X_\infty/L,V}} & & \mathcal{N}_{\mathfrak{F}_\infty/L}(V(\tau^{-1}))^{\psi=1}\cong \left(\mathcal{N}_{L/L}(V(\tau^{-1}))\hten_{o_L} S_{\mathfrak{F}_\infty/L}\right)^{\psi=1}\ar[d]^{(\varpi^{-1})_*}  \\
  H^1_{\mathrm{Iw}}({L}_\infty /L,V(\varpi)) \ar[rr]^-{\mathrm{Exp}^*_{L_\infty/L,V(\varpi)}} && \mathcal{N}_{L_\infty/L}(V(\tau^{-1})(\varpi))^{\psi=1}\cong D_{\LT}(V(\tau^{-1})(\varpi))^{\psi_L=1}.   }
\end{equation}

Now we are interested in the descent to the unramified part, consider the following diagram for $j\geq 1$, whenever $ \mathbb{E}\mathrm{xp}^*_{L_m\mathfrak{F}_\infty/L_m,V(\chi_\LT^{-j})}$   is defined, i.e., if $H^0(G_{L_m\mathfrak{F}_\infty},V^*(\chi_\LT^{j})(1))=0$\footnote{Note that for $j\gg1$ the Hodge-Tate weights of $V^*(\chi_\LT^{j})(1) $ are all positive. Then $D_{\dR}^0$ of it is zero (over $L$, but then also over $L_m\mathfrak{F}_\infty$ by faithfully flat base change), whence $H^0(G_{L_m\mathfrak{F}_\infty},V^*(\chi_\LT^{j})(1))=0$ by the fundamental exact sequence, which implies that $H^0(G_{L_m\mathfrak{F}_\infty},W)\subseteq D_{\dR,L_m\mathfrak{F}_\infty}^0(W)$ for general $W$. } or $H^0(G_{L_m\mathfrak{F}_\infty},V(\chi_\LT^{-j}))=0$.

\begin{equation}\label{f:interpolationunramified}
 \xymatrix{
   H^1_{\Iw}(X_\infty/L,V) \ar[dd]_{\mathrm{pr}_{X_\infty/L_m\mathfrak{F}_\infty}^{-j}} \ar[rr]^-{\mathrm{Exp}^*_{X_\infty/L,V}} && \mathcal{N}_{\mathfrak{F}_\infty/L}(V(\tau^{-1}))^{\psi=1}\cong \left(\mathcal{N}_{L/L}(V(\tau^{-1}))\hten_{o_L} S_{\mathfrak{F}_\infty/L}\right)^{\psi=1} \ar[d]^{\pi_L^{-m}\varphi_L^{-m}\otimes \mathbf{d}_1}   \\
   &&   L_m((t_{\rm LT}))\hten_{o_L} S_{\mathfrak{F}_\infty/L}\otimes_L D_{\cris,L}(V) \ar[d]^{c_{t_{\rm LT}^{j}}\otimes (\id\otimes e_{-j})} \\
   H^1_{\Iw}(L_m\mathfrak{F}_\infty/L_m,V(\chi_\mathrm{\rm LT}^{-j}))\ar[rr]^-{{\mathbb{E}\mathrm{xp}}^*_{L_m\mathfrak{F}_\infty/L_m,V(\chi_\LT^{-j})}}&&   L_m \otimes_{o_L} S_{\mathfrak{F}_\infty/L} \otimes_L D_{\cris,L}(V(\chi_\LT^{-j})).  }
\end{equation}
Here, the right vertical maps are just defined using base change with $-\hten_{o_L} S_{\mathfrak{F}_\infty/L}$ (with respect to the first one before taking $\psi$ invariants, i.e., regarding the last inclusion of \eqref{f:compWachDcris}). In order to analyze the commutativity of this diagram we first show the following lemma.
\begin{lemma}\label{lem:relative} The diagram
 \begin{equation}\label{f:relativedR}\footnotesize
    \xymatrix{
      \left(\mathcal{N}_{L/L}(V(\tau^{-1}))\hten_{o_L} S_{\mathfrak{F}_\infty/L}\right)^{\psi=1} \ar[d]_{\left(c_{t_{\rm LT}^{j}}\otimes (\id\otimes e_{-j})\right)\circ\pi_L^{-m}\varphi_L^{-m}(-)\otimes \mathbf{d}_1} \ar[rr]^-{(\varpi^{-1})_*} &  & \mathcal{N}_{L_\infty/L}(V(\tau^{-1})(\varpi))^{\psi=1} \ar[d]^{\left(c_{t_{\rm LT}^{j}}\otimes (\id\otimes e_{-j})\right)\circ\pi_L^{-m}\varphi_L^{-m}(-)\otimes \mathbf{d}_1} \\
     L_m \otimes_{o_L} S_{\mathfrak{F}_\infty/L} \otimes_L D_{\cris,L}(V(\chi_\LT^{-j})) \ar[rr]^-{(\varpi^{-1})_*} & & L_m\otimes_L D_{\dR,L}(V(\chi_\LT^{-j})(\varpi))   }
 \end{equation}
 commutes.
\end{lemma}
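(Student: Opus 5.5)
The plan is to reduce the commutativity of \eqref{f:relativedR} to a statement about elementary tensors. First use Lemma \ref{lem:KR-modules}(i) to write an element of $\left(\mathcal{N}_{L/L}(V(\tau^{-1}))\hten_{o_L} S_{\mathfrak{F}_\infty/L}\right)^{\psi=1}$ as a convergent sum of elements $n\otimes s$ with $n\in\mathcal{N}_{L/L}(V(\tau^{-1}))$ and $s\in S_{\mathfrak{F}_\infty/L}$. All maps in the diagram are continuous and $L$-linear, so it suffices to check the claim on such elements. This remains true even if the individual summands are not $\psi$-invariant, because $\pi_L^{-m}\varphi_L^{-m}$ and $c_{t_\LT^j}$ are defined on the bigger module via the inclusion \eqref{f:compWachDcris} tensored with $S_{\mathfrak{F}_\infty/L}$.

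Next make the specialization map $(\varpi^{-1})_*$ on the top row explicit. By Lemma \ref{lem:KR-modules}(v), applied to $M=V(\tau^{-1})\otimes_{o_L}\mathbb{T}_\Upsilon\otimes_{\Lambda,\varpi^{-1}}E$, together with Lemma \ref{lem:varpidescent}(iii), the map sends $n\otimes s$ to $n\otimes\varpi^{-1}_*(s)$. Here $\varpi^{-1}_*(s)\in D_{\dR,L}(E(\varpi))=(\widehat{o_{\mathfrak{F}_\infty}}\otimes E(\varpi))^\Upsilon$ is viewed inside $\mathcal{N}_{L/L}(V(\tau^{-1})(\varpi))=\mathcal{N}_{L/L}(V(\tau^{-1}))\otimes_L D_{\cris,L}(E(\varpi))$. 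Under the comparison \eqref{f:compWachDcris}, this multiplicativity should match the identification $b_\varpi$ of \S\ref{sec:proof}, namely $D_{\cris,L}(V(\tau^{-1})(\varpi))\cong D_{\cris,L}(V(\tau^{-1}))\otimes D_{\cris,L}(E(\varpi))$. The bottom $(\varpi^{-1})_*$ is by definition $\id\otimes\varpi^{-1}_*$ on the $S$-factor, composed with the same tensor identification, as in Lemma \ref{lem:varpidescent}(iv).

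It then remains to check that the vertical maps $\left(c_{t_\LT^j}\otimes(\id\otimes e_{-j})\right)\circ\pi_L^{-m}\varphi_L^{-m}(-)\otimes\mathbf{d}_1$ commute with tensoring by the rank one factor $D_{\cris,L}(E(\varpi))$. The factors $c_{t_\LT^j}$, $\mathbf{d}_1$ and $e_{-j}$ involve only the Lubin-Tate variable and twists by characters of $\Gamma_L$, so they are visibly compatible. The only delicate point is $\varphi_L^{-m}$. On the left side it acts on $n$ alone, since by Lemma \ref{lem:KR-modules} and the footnote to (iii) the Frobenius on the $S$-factor has been absorbed into the relative structure. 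On the right side it acts on $D_{\cris,L}(V(\tau^{-1})(\varpi))$, hence also on the $D_{\cris,L}(E(\varpi))$-part, where $\varphi_L$ acts by the scalar $\varpi(\mathrm{Frob})^{\pm1}$ up to the Frobenius on $\widehat{\mathfrak{F}_\infty}$.

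I expect this Frobenius bookkeeping to be the main obstacle. The approach I would try first is to pass to $\widehat{\mathfrak{F}_\infty}\otimes_L$ and use that the period of $\varpi$ in $\widehat{\mathfrak{F}_\infty}\otimes E$ satisfies $\varphi(\text{period})=\varpi(\varphi_q)^{-1}\cdot\text{period}$. The $\varphi$-twist on the relative module $\mathcal{N}_{\mathfrak{F}_\infty/L}$, given by $\varphi\otimes\sigma_q$, combined with the defining property \eqref{f:Sinftyproperty} of $S_{\mathfrak{F}_\infty/L}$ (so $\sigma_q$ on coefficients equals multiplication by $[\varphi_q]$), then turns under $\varpi^{-1}$ exactly into this scalar. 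After this, one also has to verify that the conventions $\varpi$ versus $\varpi^{-1}$ in Lemma \ref{lem:varpidescent} match the direction of the twist in $b_\varpi$. Once these two checks are done, the diagram commutes on elementary tensors, and by continuity it commutes in general.
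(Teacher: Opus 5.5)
Your proposal is correct and follows essentially the paper's route. In both, the specialization maps $(\varpi^{-1})_*$ are unwound into the canonical tensor-product identifications from Lemma \ref{lem:KR-modules}(v) and Lemma \ref{lem:varpidescent}, which reduces commutativity to the fact that the comparison inclusion \eqref{f:compWachDcris}, together with its Frobenius, is compatible with tensoring by the unramified rank-one factor. The paper simply cites this from \cite[(3.13)]{SV24}, while you make the $\varphi_L^{-m}$-bookkeeping explicit. Drop your remark that on the left $\varphi_L^{-m}$ ``acts on $n$ alone''. As your last paragraph says, it must be the relative Frobenius $\varphi\otimes\sigma_q$, which on $S_{\mathfrak{F}_\infty/L}$ is multiplication by $[\varphi_q]$ via \eqref{f:Sinftyproperty}; otherwise the two paths would differ by the scalar $\varpi(\varphi_q)^{\pm m}$.
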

\begin{proof}
  Upon unwinding the definitions we have to show the commutativity of the diagram
\begin{equation*}
  \xymatrix{
    \mathcal{N}_{L/L}(V(\tau^{-1}))\otimes_{L} (\widehat{o_{\mathfrak{F}_\infty}} {\otimes}_{o_L} E(\varpi^{-1}))^{\Upsilon} \ar[d]_{ } \ar[r]^{ } & \cR_L^+\otimes_L D_{\cris,L}(V(\tau^{-1}))\otimes_{L} (\widehat{o_{\mathfrak{F}_\infty}} {\otimes}_{o_L} E(\varpi^{-1}))^{\Upsilon}\ar[d]^{ } \\
    \mathcal{N}_{L/L}(V(\tau^{-1})(\varpi))\ar[r]^{ } &  \cR_L^+\otimes_L D_{\cris,L}(V(\tau^{-1})(\varpi)) ,  }
\end{equation*}
in which the top line arises as base  change with $- \otimes_{L} (\widehat{o_{\mathfrak{F}_\infty}} {\otimes}_{o_L} E(\varpi^{-1}))^{\Upsilon}$ of the last inclusion of \eqref{f:compWachDcris} while the bottom line is the analogous inclusion for $V(\tau^{-1})(\varpi).$ But since, by definition upon using   Lemma \ref{lem:KR-modules} (v) and Lemma \ref{lem:varpidescent} (ii),  the vertical maps are the canonical isomorphisms expressing the compatibilities of the functors $\mathcal{N}_{L/L}$  and $D_{\cris,L}$ with tensor-products, the claim follows from the fact that the comparison isomorphisms (induced by the horizontal inclusions) are also compatible with tensor products, see \cite[(3.13)]{SV24}.
\end{proof}

We will show the commutativity of \eqref{f:interpolationunramified} (for fixed $j$) by specializing at infinitely many points (which suffices as a non-zero power series in $R$ only can have finitely many zeroes by the Weierstrass Preparation Theorem). In order to calculate these specializations the reader is invited to look at diagram \eqref{f:hugediag} below. The specialization of \[ {\mathbb{E}\mathrm{xp}}^*_{L_m\mathfrak{F}_\infty/L_m,V(\chi_\LT^{-j})}\circ \mathrm{pr}_{X_\infty/L_m\mathfrak{F}_\infty}^{-j}\] at $\varpi$ coincides with $\exp^*_{L_m, V(\chi_\LT^{-j})(\varpi)}\circ  \mathrm{pr}_{X_\infty/L_m}^\varpi$ by \eqref{f:dualExpDescent}, which in turn can be expressed as
\[\left(c_{t_{\rm LT}^{j}}\otimes (\id\otimes e_{-j})\right)\circ\pi_L^{-m}\varphi_L^{-m}(\mathrm{Exp}^*_{L_\infty/L,V(\varpi)})\otimes \mathbf{d}_1\circ\mathrm{pr}_{X_\infty/L_\infty}^\varpi\]
by Theorem \ref{thm:Colmez} for those $\varpi$, such that for $V(\varpi)$ the conditions of that theorem are satisfied, i.e.,  $D_{\cris,L}(V(\tau^{-1}\varpi))^{\varphi_L=1}=D_{\cris,L}(V(\tau^{-1}\varpi))^{\varphi_L=\pi_L^{-1}}=0$ and   the operators  $1-\pi_L^{-1-j}\varphi_L^{-1}, 1-\frac{\pi_L^{j+1}}{q}\varphi_L $ are invertible on $D_{\cris,L}(V(\tau^{-1}\varpi))$. Note that this holds for all but finitely many $\varpi.$
Using   \eqref{f:descentXtoL} this map equals
\[\left(c_{t_{\rm LT}^{j}}\otimes (\id\otimes e_{-j})\right)\circ\pi_L^{-m}\varphi_L^{-m}((\varpi^{-1})_\ast \circ\mathrm{Exp}_{X_\infty/L,V}^*)\otimes \mathbf{d}_1\]
Finally, the commutative diagram \eqref{f:relativedR}
 shows that the latter map coincides with the specialization of \[\left(c_{t_{\rm LT}^{j}}\otimes (\id\otimes e_{-j})\right)\circ \pi_L^{-m}\varphi_L^{-m}(\mathrm{Exp}^*_{X_\infty/L,V} )\otimes \mathbf{d}_1 \] at $\varpi$ as claimed. Hence we have shown the following result.

 \begin{theorem}\label{thm:bigColmez}
 Let $T(\tau^{-1}) \in \Rep_{o_L,f}^{\cris,\mathrm{an}}(G_L)$ and $V(\tau^{-1})=T(\tau^{-1})[1/\piL]$ with nonnegative Hodge-Tate weights.   For    $j\geq 1$ such that $H^0(G_{L_m\mathfrak{F}_\infty},V^*(\chi_\LT^{j})(1))=0$ or $H^0(G_{L_m\mathfrak{F}_\infty},V(\chi_\LT^{-j}))=0$, the diagram \eqref{f:interpolationunramified} commutes, i.e., an analogue of Colmez' reciprocity law holds for the big dual exponential maps or in other words for unramified families simultaneously.
 \end{theorem}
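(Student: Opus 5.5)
The plan is to compare the two composites in \eqref{f:interpolationunramified} after specializing at unramified characters $\varpi\colon\Upsilon\to o_E^\times$. Both composites take values in $L_m\otimes_{o_L} S_{\mathfrak{F}_\infty/L}\otimes_L D_{\cris,L}(V(\chi_\LT^{-j}))$. By Lemma \ref{lem:Ups-cohomology}(ii) this is a finite free module over $L_m\otimes_{o_L}\Lambda_{o_L}(\Upsilon)$. Hence the difference of the two composites, evaluated at a fixed $y\in H^1_{\Iw}(X_\infty/L,V)$, is a tuple of elements of $R$ (tensored with $L_m$). By Weierstrass preparation on each $\Lambda_{o_L}(\Upsilon')\cong o_L[[T]]$-component, such an element vanishes as soon as it vanishes under infinitely many specializations $\varpi_*$. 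So it suffices to check $(\varpi^{-1})_*$ of both composites agree for all $\varpi$ outside a finite set.

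\emph{Left-lower path.} By the interpolation property \eqref{f:dualExpDescent} of $\mathbb{E}\mathrm{xp}^*_{L_m\mathfrak{F}_\infty/L_m,V(\chi_\LT^{-j})}$, applied over the base $L_m$ and using whichever of the two vanishing hypotheses holds, together with the Convention, the specialization at $\varpi$ equals $\exp^*_{L_m,V(\chi_\LT^{-j})(\varpi)}$ applied to the specialized projection. The projection $\mathrm{pr}^{-j}_{X_\infty/L_m\mathfrak{F}_\infty}$ followed by $(\varpi^{-1})_*$ must be identified with $\mathrm{pr}^{-j}_{L_\infty/L_m}\circ\mathrm{pr}^\varpi_{X_\infty/L_\infty}$. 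Both are cup products with basis vectors of the characters followed by corestrictions, so this is a routine compatibility of cup products with corestriction.

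\emph{Upper-right path.} I then apply Theorem \ref{thm:Colmez} to $V(\varpi)$. Its hypotheses are that $V(\varpi\tau^{-1})$ has no $\varphi_L$-eigenvalue $1$ or $\pi_L^{-1}$, and that $1-\pi_L^{-1-j}\varphi_L^{-1}$ and $1-\frac{\pi_L^{j+1}}{q}\varphi_L$ are invertible. Twisting by $\varpi$ multiplies the eigenvalues of $\varphi_L$ by the scalar $\varpi(\mathrm{Frob})^{\pm1}$ (via \eqref{f:Dcrisunramifiedtwist}), so each hypothesis fails for only finitely many values $\varpi(\mathrm{Frob})$. Theorem \ref{thm:Colmez} rewrites $\exp^*\circ\mathrm{pr}^{-j}_{L_\infty/L_m}$ via $\mathrm{Exp}^*_{L_\infty/L,V(\varpi)}$. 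The descent diagram \eqref{f:descentXtoL} then replaces $\mathrm{Exp}^*_{L_\infty/L,V(\varpi)}\circ\mathrm{pr}^\varpi$ by $(\varpi^{-1})_*\circ\mathrm{Exp}^*_{X_\infty/L,V}$. Finally Lemma \ref{lem:relative} commutes $(\varpi^{-1})_*$ past $\left(c_{t_\LT^j}\otimes(\id\otimes e_{-j})\right)\circ\pi_L^{-m}\varphi_L^{-m}(-)\otimes\mathbf{d}_1$, yielding exactly the specialization of the upper-right path.

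\emph{Main obstacle.} I expect the main care to be needed in the hypotheses of Theorem \ref{thm:Colmez}. That theorem requires Hodge--Tate weights of $V$ in $[1,r]$ and no quotient $L(\tau)$. The present statement only asks for nonnegative weights of $V(\tau^{-1})$, which gives weights $\ge 1$ for $V$; the $\psi=1$ part should be handled as in the paper via \eqref{f:descentXtoL}. I would also check that the map $\varpi\mapsto$ specialization hits infinitely many distinct points when $\Upsilon$ is infinite. If $\Upsilon$ is finite, the specializations at all characters of $\Upsilon$, after extending scalars, already separate elements of $L[\Upsilon]$. In that case I must verify the finitely many exceptional $\varpi$ are avoided, or restrict to $\Upsilon$ infinite, which is the only case where the Weierstrass argument is genuinely needed.
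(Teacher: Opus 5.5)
Your proposal is correct and follows the paper's own argument essentially step by step. You specialize both composites at all but finitely many unramified characters $\varpi$, which suffices by Weierstrass preparation. You identify the lower-left path via the interpolation property of the big dual exponential map and the compatibility of corestrictions. You then pass through Theorem~\ref{thm:Colmez} for $V(\varpi)$, the descent diagram \eqref{f:descentXtoL}, and Lemma~\ref{lem:relative}, exactly as the paper does.

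The points you flag are also left implicit in the paper: the Hodge--Tate range required in Theorem~\ref{thm:Colmez}, and the need for $\Upsilon$ to be infinite with infinitely many admissible $\varpi$ in each $\Delta$-component. The paper only checks the $\varphi_L$-eigenvalue conditions, for all but finitely many $\varpi$.
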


As a consequence we obtain the following improvement of Theorem \ref{thm:Colmez} in the sense that we get rid of some conditions (paying the price that we have possibly to restrict the source slightly).

\begin{theorem}\label{thm:ColmzezImproved}
Let $T(\tau^{-1}) \in \Rep_{o_L,f}^{\cris,\mathrm{an}}(G_L)$ and $V(\tau^{-1})=T(\tau^{-1})[1/\piL]$ with nonnegative Hodge-Tate weights. Then, on the image $I_V$ of $  H^1_{\mathrm{Iw}}(X_\infty/L,V) \xrightarrow{\mathrm{pr}_{X_\infty/L_\infty}} H^1_{\mathrm{Iw}}(L_\infty/L,V)$ diagram \eqref{f:interpolationgeneral} commutes for all $j\geq 1$, such that $H^0(G_{L_m\mathfrak{F}_\infty},V^*(\chi_\LT^{j})(1))=0$ or \linebreak $H^0(G_{L_m\mathfrak{F}_\infty},V(\chi_\LT^{-j}))=0$.
\end{theorem}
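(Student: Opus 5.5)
The plan is to deduce Theorem \ref{thm:ColmzezImproved} from Theorem \ref{thm:bigColmez} by specializing at the trivial character $\varpi=\mathbf{1}$ of $\Upsilon$. Let $x\in I_V$, say $x=\mathrm{pr}_{X_\infty/L_\infty}(y)$ with $y\in H^1_{\mathrm{Iw}}(X_\infty/L,V)$; note that for $\varpi=\mathbf{1}$ the map $\mathrm{pr}^{\mathbf{1}}_{X_\infty/L_\infty}$ is just corestriction, so $x=\mathrm{pr}^{\mathbf{1}}_{X_\infty/L_\infty}(y)$. Fix $j\geq 1$ satisfying the stated $H^0$-vanishing, so that ${\mathbb{E}\mathrm{xp}}^*_{L_m\mathfrak{F}_\infty/L_m,V(\chi_\LT^{-j})}$ is defined and diagram \eqref{f:interpolationunramified} commutes for $y$.

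First, apply the specialization map $(\mathbf{1})_*$ (evaluation of $S_{\mathfrak{F}_\infty/L}$ at the trivial character, i.e.\ Lemma \ref{lem:varpidescent} with $\varpi=\mathbf{1}$) to both composites of \eqref{f:interpolationunramified}. On the lower-left route, the descent diagram \eqref{f:dualExpDescent} (for the field $L_m$ and the representation $V(\chi_\LT^{-j})$) gives
\[(\mathbf{1})_*\circ{\mathbb{E}\mathrm{xp}}^*_{L_m\mathfrak{F}_\infty/L_m,V(\chi_\LT^{-j})}\circ \mathrm{pr}^{-j}_{X_\infty/L_m\mathfrak{F}_\infty}(y)=\exp^*_{L_m,V(\chi_\LT^{-j})}\bigl((\mathbf{1})_*\mathrm{pr}^{-j}_{X_\infty/L_m\mathfrak{F}_\infty}(y)\bigr),\]
and by transitivity of corestriction and compatibility of twisting with corestriction, $(\mathbf{1})_*\circ\mathrm{pr}^{-j}_{X_\infty/L_m\mathfrak{F}_\infty}=\mathrm{pr}^{-j}_{L_\infty/L_m}\circ\mathrm{pr}_{X_\infty/L_\infty}$, so this is $\exp^*_{L_m,V(\chi_\LT^{-j})}(\mathrm{pr}^{-j}_{L_\infty/L_m}(x))$, the lower-left route of \eqref{f:interpolationgeneral} applied to $x$.

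On the upper-right route, Lemma \ref{lem:relative} with $\varpi=\mathbf{1}$ shows that specializing $\bigl(c_{t_\LT^j}\otimes(\id\otimes e_{-j})\bigr)\circ\pi_L^{-m}\varphi_L^{-m}(\mathrm{Exp}^*_{X_\infty/L,V}(y))\otimes\mathbf{d}_1$ equals the same operator applied to $(\mathbf{1})_*\mathrm{Exp}^*_{X_\infty/L,V}(y)$, and \eqref{f:descentXtoL} identifies the latter with $\mathrm{Exp}^*_{L_\infty/L,V}(\mathrm{pr}_{X_\infty/L_\infty}(y))=\mathrm{Exp}^*_{L_\infty/L,V}(x)$. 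Hence the upper-right route of \eqref{f:interpolationgeneral} on $x$ agrees with the specialized upper-right route of \eqref{f:interpolationunramified}, and commutativity of the latter yields commutativity of \eqref{f:interpolationgeneral} on $x$. Note that no Frobenius-eigenvalue conditions enter, since Theorem \ref{thm:bigColmez} was proven by Zariski density over all but finitely many $\varpi$; the exceptional point $\varpi=\mathbf{1}$ is now covered by continuity/algebraicity of the specialization.

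The main obstacle I expect is bookkeeping: checking that the projection maps really match, i.e.\ that $\mathrm{pr}^{-j}_{X_\infty/L_m\mathfrak{F}_\infty}$ followed by $(\mathbf{1})_*$ equals $\mathrm{pr}^{-j}_{L_\infty/L_m}\circ\mathrm{pr}_{X_\infty/L_\infty}$ (identifying $(\mathbf{1})_*$ on $H^1_{\mathrm{Iw}}(L_m\mathfrak{F}_\infty/L_m,-)\cong H^1(L_m,\mathbb{T}_\Upsilon(-))$ with corestriction to $L_m$), and that the identification $L_m\otimes_{o_L}S_{\mathfrak{F}_\infty/L}\otimes_{R,\mathbf{1}}E\cong L_m\otimes_L D_{\cris,L}(V(\chi_\LT^{-j}))$ used in Lemma \ref{lem:varpidescent} is the identity on the $D_{\cris}$ factor for the trivial character. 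Also one must make sure the hypotheses of \eqref{f:descentXtoL} (good crystalline $T$) hold under the stated assumptions; I would check this via the footnote allowing infinite $\mathfrak{F}_\infty$ without the trivial-quotient condition.
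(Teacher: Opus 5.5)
Your proposal is correct and is essentially the paper's own argument: the paper lifts an element of $I_V$ to $H^1_{\mathrm{Iw}}(X_\infty/L,V)$ and chases it around the cube \eqref{f:hugediag} at the trivial character. It uses exactly the faces you use: the left face (corestriction), the top face \eqref{f:descentXtoL}, the right face (Lemma \ref{lem:relative}), the bottom face (descent of the big dual exponential) and the back face (Theorem \ref{thm:bigColmez}). One small correction: no continuity argument is needed at $\varpi=\mathbf{1}$, because Theorem \ref{thm:bigColmez} is an identity of maps into the $R$-module $L_m\otimes_{o_L}S_{\mathfrak{F}_\infty/L}\otimes_L D_{\cris,L}(V(\chi_\LT^{-j}))$, so you simply apply $(\mathbf{1})_*$ to it.
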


\begin{proof}
Fix $m,j\geq 1$ and  assume that $j $ satisfies $H^0(G_{L_m\mathfrak{F}_\infty},V^*(\chi_\LT^{j})(1))=0$ or \linebreak $H^0(G_{L_m\mathfrak{F}_\infty},V(\chi_\LT^{-j}))=0$.
  %and choose an (unramified) character $\varpi$ of $\Upsilon$ such that both
%  $D_{\cris,L}(V(\tau^{-1}\varpi))^{\varphi_L=1}=D_{\cris,L}(V(\tau^{-1}\varpi))^{\varphi_L=\pi_L^{-1}}=0$ and   the operators  $1-\pi_L^{-1-j}\varphi_L^{-1}, 1-\frac{\pi_L^{j+1}}{q}\varphi_L $ are invertible on $D_{\cris,L}(V(\tau^{-1}\varpi))$.
  Then $V$ satisfies the conditions of Theorem \ref{thm:Colmez}.
  %Lift a given element $y\in I_V$ to an element $x\in H^1_{\mathrm{Iw}}(X_\infty/L,V).$ For the commutativity of
Look at the diagram \eqref{f:hugediag} below with $\varpi$ being the trivial character. Instead of comparing the images of any given element $y\in I_V$ under the two different paths in the front page from the upper left to the lower right corner, we may check the images of any lift of $y$ to $x\in H^1_{\mathrm{Iw}}(X_\infty/L,V)$ under paths which replaces those from the front side using the other faces which are already known to commute. The claim follows.
%
%\hrule
%
% Assume {\color{red} (first???)} that $D_{\cris,L}(V(\tau^{-1}))^{\varphi_L=1}=D_{\cris,L}(V(\tau^{-1}))^{\varphi_L=\pi_L^{-1}}=0$ and let $j\geq 1$ and fix $m.$ In case the operators  $1-\pi_L^{-1-j}\varphi_L^{-1}, 1-\frac{\pi_L^{j+1}}{q}\varphi_L $ are invertible on $D_{\cris,L}(V(\tau^{-1}))$, the commutativity is already known by the theorem. Otherwise  lift a given element $y\in I_V$ to an element $x\in H^1_{\mathrm{Iw}}(X_\infty/L,V).$ For the commutativity of
\end{proof}

The results can be summarized by the commutativity of the following diagram    for $V=T[\frac{1}{\pi_L}]$ satisfying:
\begin{enumerate}
  \item $T(\tau^{-1}) \in \Rep_{o_L,f}^{\cris,\mathrm{an}}(G_L),$
  \item $V(\tau^{-1})$ with nonnegative Hodge-Tate weights, and
  \item $H^0(G_{L_m\mathfrak{F}_\infty},V^*(\chi_\LT^{j})(1))=0$.
\end{enumerate}

\begin{equation}\label{f:hugediag}\resizebox{\hsize}{!}{
   \xymatrix{
  & H^1_{\mathrm{Iw}}(X_\infty/L,V) \ar[rr]^-{\mathrm{Exp}^*_{X_\infty/L,V}} \ar'[d][dd]_{\mathrm{pr}_{X_\infty/L_m\mathfrak{F}_\infty}^{-j}}\ar[dl]_-{\mathrm{pr}_{X_\infty/L_\infty}^\varpi}
      &  &  \left(\mathcal{N}_{L/L}(V(\tau^{-1}))\hten_{o_L} S_{\mathfrak{F}_\infty/L}\right)^{\psi=1} \ar[dd]^{c_{t_{\rm LT}^{j}} \otimes e_{-j}\circ\pi_L^{-m}\varphi_L^{-m} \otimes \mathbf{d}_1}    \ar[dl]^-{(\varpi^{-1})_*}    \\
 H^1_{\mathrm{Iw}}(L_\infty/L,V(\varpi)) \ar@{-->}[rr]^-{\mathrm{Exp}^*_{L_\infty/L,V(\varpi)}}\ar@{-->}[dd]_{\mathrm{pr}_{L_\infty/L_m}^{-j}}
      &  &  \mathcal{N}_{L_\infty/L}(V(\tau^{-1})(\varpi))^{\psi=1} \ar[dd]^(0.65){c_{t_{\rm LT}^{j}} \otimes e_{-j}\circ\pi_L^{-m}\varphi_L^{-m} \otimes \mathbf{d}_1} \\
  & H^1_{\Iw}(L_m\mathfrak{F}_\infty/L_m,V(\chi_\mathrm{\rm LT}^{-j})) \ar'[r]^-{{\mathbb{E}\mathrm{xp}}^*_{L_m\mathfrak{F}_\infty/L_m,V(\chi_\mathrm{\rm LT}^{-j})}}[rr]\ar[dl]_-{\mathrm{pr}_{L_m\mathfrak{F}_\infty/L_m}^\varpi}
      &  & L_m \otimes_{o_L} S_{\mathfrak{F}_\infty/L} \otimes_L D_{\cris,L}(V(\chi_\LT^{-j}))  \ar[dl]^{(\varpi^{-1})_*}             \\
   H^1(L_m,V(\chi_\mathrm{\rm LT}^{-j})(\varpi)) \ar[rr]^-{\exp_{L_m,V(\chi_\LT^{-j})(\varpi)}^*}
      &  &  L_m \otimes_L D_{\cris,L}(V(\chi_\LT^{-j})(\varpi))      }}
\end{equation}

The left face commutes by functoriality of corestriction maps, the top face by \eqref{f:descentXtoL}, the right face by \eqref{f:relativedR}, the back side by Theorem \ref{thm:bigColmez}  (respectively \eqref{f:interpolationunramified}), the lower face by  \eqref{f:dualExpDescent} (assuming that    ) and the front side by Theorem \ref{thm:ColmzezImproved}, at least on the image $I_V$, which is aimed to be indicated by the dotted arrows.

\begin{remark}
The cokernel of $\mathrm{pr}_{X_\infty/L_\infty},$ i.e., $H^1_{\mathrm{Iw}}(L_\infty/L,V)/I_V$, is controlled via the following convergent cohomological spectral sequence arising from \cite[Prop.\ 1.6.5 (3)]{FK}
\begin{equation*}
  E_2^{i,j}:=\mathrm{Tor}_{-i}^{\Lambda(X_\infty/L)}( H^j_{\mathrm{Iw}}(X_\infty/L,V),\Lambda(L_\infty/L)) \Rightarrow  H^{i+j}_{\mathrm{Iw}}(L_\infty/L,V),
\end{equation*}
which induces the following exact sequence of terms in lower degree
\begin{multline}
\mathrm{Tor}_{2}^{\Lambda(X_\infty/L)}( H^2_{\mathrm{Iw}}(X_\infty/L,V),\Lambda(L_\infty/L)) \to\\
 H^1_{\mathrm{Iw}}(X_\infty/L,V)_\Upsilon \to H^1_{\mathrm{Iw}}(L_\infty/L,V) \to \mathrm{Tor}_{1}^{\Lambda(X_\infty/L)}( H^2_{\mathrm{Iw}}(X_\infty/L,V),\Lambda(L_\infty/L))\to 0,
\end{multline}
where  we write $\Lambda(K/L)$ for the Iwasawa algebra $o_L[[G(K/L)]].$
The last non-trivial term identifies with $H^2_{\mathrm{Iw}}(X_\infty/L,V)^\Upsilon,$ in case $\Upsilon\cong\zp$ (and probably in general); in particular, the cokernel is a finite dimensional $L$-vector space. By local Tate duality, it is even zeroe, if $H^0(L_\infty,V^*(1))$ vanishes.
\end{remark}

\section{Comparison of regulator maps for different Lubin-Tate formal groups}\label{sec:comparison}

In this section we only consider $F=L=\mathbb{Q}_p$ and compare the regulator map constructed by Loeffler and Zerbes \cite{LZ}, i.e., the one attached to the multiplicative  formal group $\hat{\mathbb{G}}_m$ with prime element $p,$ to Kupferer's regulator map for a Lubin-Tate formal group over $\mathbb{Q}_p$ attached to a prime element
$$\pi=\epsilon p$$
for some $\epsilon\in\mathbb{Z}_p^\times.$ Both regulator maps take values (after base extension to $\mathbb{C}_p$ at least)   in
$$D_{\Qp}(\GaL,\mathbb{C}_p) \hten_{\mathbb{C}_p} D_{\Qp}(\Upsilon,\mathbb{C}_p)
	 \otimes_F  D_{\cris,F}(V(\tau^{-1}))\cong D_{\Qp}(G,\mathbb{C}_p)
	 \otimes_F  D_{\cris,F}(V(\tau^{-1}))$$
(but for different $\tau$!). As we are going to compare their interpolation properties we first analyse in \S \ref{sec zariski} to which extent the images under the regulator map are uniquely determined by them.

\subsection{On Zariski dense subsets of character varieties}\label{sec zariski}

For simplicity we assume $p\neq 2$. Let $A$ be a finite abelian group of order $a=|A|$ prime to $p$ and let in this subsection $\mathcal{G}=A\times \zp.$ We assume that $L$ is a finite extension of $\Qp$ containing $\mu_{a}$. The character variety $\widehat{\mathcal{G}}$ over $\Qp$ is
\[\widehat{\mathcal{G}}=\coprod_{\widehat{A}} \mathbb{B}_1,\]
where $\widehat{A}$ denotes the group of characters of $A$ with values in $\overline \QQ_p$ (they actually lie all in $L$ already) and $\mathbb{B}_1 $ is the open unit disk centered around $1$. For $L\subseteq K\subseteq \mathbb{C}_p$ any complete field, its $K$-valued points
\[\widehat{\mathcal{G}}(K)=\coprod_{\widehat{A}} \mathbb{B}_1(K)\]
correspond to continuous characters $\chi:\mathcal{G}\to K^\times$: if $z\in \mathbb{B}_1(K)$ lies in the component of $\omega\in \widehat{A},$ then $\chi_{\omega, z}(g):= \omega(\mathrm{pr}_A(g))z^{\mathrm{pr}_{\zp}(g)}.$

As a finite disjoint sum of open unit disks $\mathfrak{X}=\widehat{\mathcal{G}}$ is a smooth one dimensional quasi-Stein rigid analytic variety over $\Qp$, in particular it has an admissible covering $\mathfrak{X}=\bigcup_n \mathfrak{X}_n$ by an increasing sequence $\mathfrak{X}_1\subseteq \ldots \subseteq \mathfrak{X}_n\subseteq \ldots$ of affinoid subdomains such that the restriction maps have dense image.

\begin{example}\
\begin{enumerate}
  \item After choosing a primitive $(p-1)$th root of unity $\zeta_{p-1}$ and a topological generator $\kappa$ of $1+p\zp$ and thereby identifying $\mathbb{Z}_p^\times \cong \mathbb{Z}/p-1 \times \zp$ the powers  $\chi_{\mathrm{cyc}}^j$ of the cyclotomic character give rise to the characters $\chi_{\omega^j,\kappa^j},$ where $\omega(\bar{m})=\zeta_{p-1}^m.$
  \item Similarly, the powers $\chi_\mathrm{LT}^j$ correspond to points in the character variety $\widehat{\mathbb{Z}_p^\times}.$ Note that $\chi_{\LT}=\chi_\epsilon\chi_\mathrm{cyc}$ where $\chi_\epsilon$ is the unique continuous unramified character with $\chi_\epsilon(\mathrm{Frob}_p)=\epsilon,$ see \cite[Prop.\ 80]{stix}.
  \item Also $\Upsilon$ is of the form $\mathcal{G}$.
\end{enumerate}
\end{example}

We have to assume later that $\chi_\mathrm{\epsilon}$ factorizes over $\mathrm{Gal}(X_\infty/\Qp).$

Recall (cf. \cite[Chapter 3]{bellaichechenevier}) that a subset $Z$ of a rigid analytic space $X$ is called \textbf{Zariski-dense} if the only reduced analytic subset containing $Z$ is $X_{\rm red}.$ Here, analytic subset means the zero locus of a coherent ideal sheaf, see \cite[9.5.2, 9.5.3]{BGR}.\footnote{\label{footnote:Stein} For a reduced Stein space this is equivalent to requiring that an analytic function vanishing along $Z$ is identically zero.
This follows immediately from the definition and the fact that for a Stein space any coherent ideal sheaf is generated by global sections.}

An illustrating example is the set $(1+p)^\mathbb{N} \subset \mathbb{B}_{1}.$ It is Zariski dense because a function vanishing on this set has infinitely many zeroes inside the affinoids $\mathbb{B}_1(r)$, the closed ball centered at $1$ of radius  $r$, and thus vanishes identically along an admissible cover by the Weierstrass Preparation Theorem \cite[Thm.\ 5.2.2/1]{BGR}.

\begin{lemma}
Let $S_{Y_i}\subseteq Y_i$ be Zariski-dense subsets of affinoid rigid analytic spaces over $\mathbb{C}_p$, $i=1,2.$ Then also
\[S_{Y_1}\times S_{Y_2} \subseteq Y_1 \times Y_2\]
is a Zariski-dense subset.
\end{lemma}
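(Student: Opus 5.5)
The plan is to reduce to reduced affinoids and then argue fibrewise. Zariski density only concerns reduced analytic subsets, and $(Y_1\times Y_2)_{\rm red}$ has the same underlying points as $(Y_1)_{\rm red}\times(Y_2)_{\rm red}$. So I may replace each $Y_i$ by its reduction. If $Y_1$ and $Y_2$ are reduced over the algebraically closed field $\mathbb{C}_p$, then $Y_1\times Y_2$ is again reduced, because reducedness is preserved under completed tensor products over an algebraically closed (hence perfect) complete field (BGR 6.4 / Conrad). By the criterion in footnote \ref{footnote:Stein}, which holds for reduced affinoids since coherent ideal sheaves are generated by global sections, it then suffices to prove the following. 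If $f\in \mathcal{O}(Y_1\times Y_2)=A_1\hat{\otimes}_{\mathbb{C}_p}A_2$ vanishes at all points of $S_{Y_1}\times S_{Y_2}$, then $f=0$. Here $A_i=\mathcal{O}(Y_i)$.

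The core step is a two-stage restriction argument. Fix $s_1\in S_{Y_1}$. The restriction of $f$ to the fibre $\{s_1\}\times Y_2\cong Y_2$ is an analytic function on the reduced space $Y_2$ vanishing on the dense set $S_{Y_2}$, so it is zero. Hence $f$ vanishes at all points of $S_{Y_1}\times Y_2$.

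Now fix an arbitrary point $y_2\in Y_2$, which is a $\mathbb{C}_p$-point since $\mathbb{C}_p$ is algebraically closed. Restricting $f$ to $Y_1\times\{y_2\}\cong Y_1$ gives a function on reduced $Y_1$ vanishing on $S_{Y_1}$, hence zero. Thus $f$ vanishes at every $\mathbb{C}_p$-point of $Y_1\times Y_2$. These are all the maximal ideals, because $\mathbb{C}_p$ is algebraically closed and affinoid algebras are Jacobson. Since the ring is reduced, the Nullstellensatz for affinoid algebras (the Jacobson radical equals the nilradical) gives $f=0$.

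The main obstacle is the bookkeeping of reductions. One has to know that an arbitrary reduced analytic subset $Z\supseteq S_{Y_1}\times S_{Y_2}$ is cut out by global functions on the affinoid, and that the fibre restrictions are the honest evaluation maps $A_1\hat{\otimes}A_2\to A_2$ (respectively $\to A_1$) given by $\mathrm{ev}_{s_1}\hat{\otimes}\id$. Once this is set up, the rest is the routine fibrewise argument above.

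I would first try to avoid the reducedness of the product altogether by working with the vanishing ideal directly. Let $I$ be the ideal of functions vanishing on $S_{Y_1}\times S_{Y_2}$. The fibre argument shows that $I$ is contained in every maximal ideal, so $I$ lies in the nilradical. The zero locus of $I$ is therefore all of $Y_1\times Y_2$, so any analytic subset containing $S_{Y_1}\times S_{Y_2}$ contains every point, and its reduced structure is $(Y_1\times Y_2)_{\rm red}$. This route needs no statement about reducedness of tensor products. The fibre arguments still require each $Y_i$ to be replaced by its reduction, which does not change the functions' values.
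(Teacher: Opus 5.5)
Your proof is correct and is essentially the paper's argument: fix $s_1\in S_{Y_1}$ to get vanishing on $S_{Y_1}\times Y_2$, then fix an arbitrary $y_2\in Y_2$ to get vanishing everywhere. The only difference is that you make explicit the passage between Zariski density and ``a function vanishing on the set vanishes identically'' (via reductions, the Nullstellensatz and the vanishing ideal), which the paper uses implicitly through the criterion in its footnote on reduced Stein spaces.
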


\begin{proof}
Let $f\in \mathcal{O}_{\mathbb{C}_p}(Y_1 \times Y_2)$ such that $S_{Y_1}\times S_{Y_2} \subseteq V(f)$, the set of zeroes of $f$ and fix an element $y\in S_{Y_1} .$ Then $f(y,-)$ can be interpreted as element of $\mathcal{O}_{ \mathbb{C}_p}( Y_2)$ satisfying $S_{Y_2}\subseteq V(f(y,-)).$ Since by assumption $S_{Y_2}$ is Zariski-dense in $Y_2$ it follows that $f(y,-)\equiv 0$ on $Y_2.$ In other words, we have shown that $S_{Y_1}\times Y_2\subseteq V(f).$ Now, fix $z\in Y_2$ and consider $f(-,z)$ as function in $\mathcal{O}_{\mathbb{C}_p}(Y_1)$: it satisfies $S_{Y_1}\subseteq V(f(-,z)) $ by what he have just shown, whence $f(-,z)\equiv0$ on $Y_1$ due to the Zariski-density of $S_{Y_1}$ by assumption. It follows that $f\equiv 0$, whence the Zariski-closure of $S_{Y_1}\times S_{Y_2}$ is ${Y_1}\times {Y_2}$ as claimed.
\end{proof}

We want to apply this to the character variety $\mathfrak{Z}:=\widehat{G}$ of $G:$
\[\mathfrak{Z}= \mathfrak{X}\times \mathfrak{Y}.\]
with $\mathfrak{X}:=\widehat{\Upsilon}$ and $\mathfrak{Y}:=\widehat{\Gamma}$. Note that $\mathfrak{Z}$ is again a quasi-Stein space with admissible increasing covering $\mathfrak{Z}_n:=\mathfrak{X}_n\times \mathfrak{Y}_n.$

Suppose that $S_\mathfrak{X}$ is a subset of $\mathfrak{X}$   such that the intersection with any connected component of $\mathfrak{X}_n$ for all $n\geq 1$ is infinite, and analogously for $S_\mathfrak{Y}.$

\begin{lemma}\label{lem:uniques}
If   $f\in \mathcal{O}_{\mathbb{C}_p}(\mathfrak{Z})$ vanishes on $S_\mathfrak{X}\times S_\mathfrak{Y}$, then $f\equiv 0.$\footnote{Using footnote \ref{footnote:Stein} and the fact that the fibre product $\mathfrak{Z}= \mathfrak{X}\times \mathfrak{Y}$ of the reduced spaces $\mathfrak{X} $ and $\mathfrak{Y} $ is again reduced (compare with \cite[Lem.\ A.26.]{Stein}), one can show that this is equivalent to: The subset $S_\mathfrak{X}\times S_\mathfrak{Y}\subseteq \mathfrak{X}\times \mathfrak{Y}$ is Zariski-dense.}
\end{lemma}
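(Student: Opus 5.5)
The plan is to reduce to the affinoid pieces $\mathfrak{Z}_n=\mathfrak{X}_n\times\mathfrak{Y}_n$ and apply the preceding product lemma there, componentwise. Since $\{\mathfrak{Z}_n\}$ is an admissible covering of $\mathfrak{Z}$, it suffices to show that $f|_{\mathfrak{Z}_n}\equiv 0$ for every $n$. Each $\mathfrak{X}_n$ (resp.\ $\mathfrak{Y}_n$) is a finite disjoint union of affinoid pieces, one closed disk $\mathbb{B}_1(r_n)$ inside each of the finitely many components $\mathbb{B}_1$ indexed by $\widehat{A}$. Hence $\mathfrak{Z}_n$ is a finite disjoint union of products $D\times D'$ of closed disks. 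It therefore suffices to show that $f$ vanishes on each such product $D\times D'$.

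First I show that for any closed disk $D=\mathbb{B}_1(r)$ (over $\mathbb{C}_p$) and any infinite subset $S\subseteq D$, every $g\in\mathcal{O}_{\mathbb{C}_p}(D)$ vanishing on $S$ is zero. After rescaling, $g$ is an element of a Tate algebra in one variable (possibly over a finite extension to make the radius rational, or directly since $r\in|\mathbb{C}_p^\times|$ may be arranged). If $g\neq 0$, the Weierstrass Preparation Theorem \cite[Thm.\ 5.2.2/1]{BGR} writes $g$ as a unit times a polynomial, so $g$ has only finitely many zeros, a contradiction. Thus $S_\mathfrak{X}\cap D$ is Zariski-dense in $D$ in the functional sense, because by hypothesis it is infinite. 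The same holds for $S_\mathfrak{Y}\cap D'$.

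Next I apply the argument of the preceding lemma to $f|_{D\times D'}$ with $S_{Y_1}=S_\mathfrak{X}\cap D$ and $S_{Y_2}=S_\mathfrak{Y}\cap D'$. Fixing $y\in S_{Y_1}$, the function $f(y,-)$ is analytic on $D'$ and vanishes on the infinite set $S_{Y_2}$, so it is identically zero. Then for each fixed $z\in D'$, the function $f(-,z)$ vanishes on $S_{Y_1}$ and hence is zero. Consequently $f$ vanishes at all $\mathbb{C}_p$-points of $D\times D'$. Since $D\times D'$ is a reduced affinoid (a polydisk), an analytic function vanishing at all points is zero, so $f|_{D\times D'}=0$. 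Gluing over the finitely many pieces and over all $n$ via the sheaf property gives $f\equiv 0$.

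The main care point is making sure the "vanishing on all points implies zero" and Weierstrass steps are legitimate over $\mathbb{C}_p$ for disks of arbitrary radius $r$. I would handle this by choosing the exhaustion with radii $r_n\in p^{\mathbb{Q}}$, so that each $D$ is isomorphic to the standard unit disk over $\mathbb{C}_p$ and reducedness of Tate algebras applies directly. Apart from this normalization, the argument uses only the slicing proof of the preceding lemma.
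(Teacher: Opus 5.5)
Your proposal is correct and follows essentially the same route as the paper. You restrict to the affinoids $\mathfrak{Z}_n=\mathfrak{X}_n\times\mathfrak{Y}_n$ and show that the infinite traces of $S_\mathfrak{X}$ and $S_\mathfrak{Y}$ are Zariski-dense in each connected component, because a nonzero analytic function there has only finitely many zeros (you get this from Weierstrass preparation on closed disks, the paper from the proof of \cite[Lemma 1.1]{BSX}); then you run the slicing argument of the preceding product lemma. Your concern about radii is harmless, since affinoid disks automatically have radii in $p^{\mathbb{Q}}$, and your extra assumption that the components of $\mathfrak{X}_n,\mathfrak{Y}_n$ are closed disks holds for the standard exhaustion.
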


\begin{proof}
We just have to show that $f$ restricted to  $\mathfrak{Z}_n$ vanishes for all $n\geq 1.$ By the proof of \cite[Lemma 1.1]{BSX} the intersection  of $S_\mathfrak{X}$ with any connected component of $\mathfrak{X}_n$ (being infinite by assumption) is Zariski dense in that component as the divisor of an analytic function of that component has finite support. The same holds analogously for $\mathfrak{Y},$ whence we are in the situation of the previous lemma, from which the claim follows.
\end{proof}

\begin{example}\label{ex:denseSets}
When working with the formal multiplicative group, we can take $S_\mathfrak{Y}:=\{\chi_\mathrm{cyc}^j\mid  j\geq m\}$ for any natural number $m.$ For $S_\mathfrak{X}$ one can choose $\widehat{A}\times \{z^j \mid j\geq n\}$ for any natural number $n$ and any $z\in 1+p\zp.$
\end{example}

\subsection{Comparing the interpolation formulae}

We   assume henceforth  that $\chi_\mathrm{\epsilon}$ factorizes over $\mathrm{Gal}(X_\infty/\Qp)$ which is equivalent to the condition that the composites  $L_\infty\mathfrak{F}_\infty$ and $\mathbb{Q}_p(\mu_{p^\infty})\mathfrak{F}_\infty$ coincide. We want to compare the regulator maps (for the group $G$) for the prime elements $p$ and $\pi=p\epsilon,$ the first one being  the cyclotomic one;  we omit the superscript $G$ and replace it by the prime element:

\[ \begin{xy} \xymatrix{
	 \mathbf{L}^p_{V}\colon \HIW^1(X_\infty/{\Qp},T) \ar[r] & D_{\Qp}(G,K)  	 \otimes_{\Qp}  D_{\cris,{\Qp}}(V ),
	 } \end{xy} \]

\[ \begin{xy} \xymatrix{
	 \mathbf{L}^\pi_{V}\colon \HIW^1(X_\infty/{\Qp},T) \ar[r] & D_{\Qp}(G,K) 	 \otimes_{\Qp}  D_{\cris,{\Qp}}(V(\tau^{-1})).
	 } \end{xy} \]
Here we assume that $K$ contains $\widehat{\mathfrak{F}_\infty}. $ In order to have the same targets we use the inverse of  $\mathbb{T}_{\tau^{-1}}$ from \eqref{f:dualexpcomp}. Note that this amounts to multiplication/tensoring by
$$\Omega^{-1}\otimes (\eta_\cyc\otimes\eta ^{\otimes (-1)})\in  D_{\cris,{\Qp}}(\Qp(\tau ))$$
%because we have $\Omega t_\LT=t_{\Qp}$
because of the following lemma.

\begin{lemma}\label{lem period}
We have $\Omega t_\LT=t_{\Qp}$.
\end{lemma}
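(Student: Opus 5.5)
Throughout we work in $F=L=\Qp$, with $\pi=\epsilon p$, so $\chi_\LT=\chi_\epsilon\chi_\cyc$. We use the definitions of the periods recalled in \S\ref{sec:notation}: $t_\LT$ is the period of $\LT$ attached to $\eta$, $t_{\Qp}$ the period of $\widehat{\GG}_m$ attached to $\eta^{\cyc}$, and $\Omega$ the period of the Cartier dual $\TLT'$ attached to $\eta'\colon\eta\mapsto\eta^{\cyc}$.

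The idea is to compare Galois actions and then fix the normalisation through the duality pairing. Concretely, $t_\LT\in B_{\dR}^+$, in fact in $B_{\cris}^{+}\otimes_{L_0}L$, is a uniformiser of $B_\dR^+$ on which $G_L$ acts via $\chi_\LT$. Likewise $t_{\Qp}$ is a uniformiser on which $G_L$ acts via $\chi_\cyc$. The period $\Omega\in\widehat{L^{\rm ab}}\subseteq\Cp$ is, following Schneider--Teitelbaum, characterised by $\sigma(\Omega)=\tau(\sigma)\Omega$ with $\tau=\chi_\cyc\chi_\LT^{-1}$. Consequently $\Omega t_\LT/t_{\Qp}$ is a $G_L$-invariant element of $\mathrm{Frac}(B_\dR^+)$. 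Both $t$'s have valuation $1$ in $B_\dR^+$ and $\Omega$ is a unit of $\Cp$, so this quotient lies in $(B_\dR^+)^{\times}$. It is therefore an element of $(B_\dR^+)^{G_L}=L$, in fact of $L^\times$. The key step is to show this constant equals $1$.

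For the constant I would use the explicit formulas. By construction (cf.\ \cite[\S 1.3]{SanoV} and Colmez/Schneider--Teitelbaum), $t_\LT=\log_\LT(\omega_\LT)$, where $\omega_\LT=\{[\eta]\}$ is the element of $W(\mathcal{O}_{\Cp^\flat})_L$ attached to $\eta$. The pairing $\TLT\times\TLT'\to\ZZ_p(1)$ is realised by the power series $\eta(1,Z)=\exp(\Omega\log_\LT(Z))$, i.e.\ the homomorphism of formal groups $\LT\to\widehat{\GG}_m$ over $o_{\Cp}$ determined by $\eta'$. Applying this homomorphism to the torsion point $\eta_n$ gives $\eta(1,\eta_n)=\eta'(\eta)_n=\eta^{\cyc}_n$, the compatible system of $p$-power roots of unity. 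Passing to the limit (Fontaine's construction $[\,\cdot\,]$), this says that the image of $\omega_\LT$ under the formal group map is $[\varepsilon]-1$, where $\varepsilon=(\eta^{\cyc}_n)$. Taking logarithms gives
\[
t_{\Qp}=\log([\varepsilon])=\log\bigl(\exp(\Omega\log_\LT(\omega_\LT))\bigr)=\Omega\log_\LT(\omega_\LT)=\Omega t_\LT .
\]

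The main obstacle is justifying the evaluation of $\eta(1,Z)=\exp(\Omega\log_\LT Z)$ at $\omega_\LT$ inside $B_\dR^+$ or $B_{\cris}^+$. The power series has $\Cp$-coefficients of growing denominators, so $\exp$ and $\log_\LT$ do not obviously converge there. I would first try to argue that $\eta(1,Z)-1\in o_{\Cp}[[Z]]$ is a genuine homomorphism of formal groups over $o_{\Cp}$ (Schneider--Teitelbaum). It can therefore be evaluated at $\omega_\LT$ in $A_{\inf}\otimes o_L$. The relation $\log_{\widehat{\GG}_m}\circ\,\eta(1,\cdot)-1=\Omega\log_\LT$ then holds as an identity of power series over $\Cp$, which converges on the maximal ideal of $B_\dR^+$ (completed) and yields the identity above. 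If this is awkward, a fallback is to check the constant modulo $\Fil^2$: compare the images of both sides in $\Fil^1/\Fil^2\cong\Cp(1)$ via the Hodge--Tate comparison, where both reduce to the same element determined by $\eta'(\eta)=\eta^{\cyc}$.
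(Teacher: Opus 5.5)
Your architecture is the paper's: $\alpha:=\eta(1,Z)-1$ is the homomorphism $\alpha_{\eta'}\colon\LT\to\widehat{\GG}_m$, one has $\Omega\log_\LT=\log_{\widehat{\GG}_m}\circ\alpha$, and the lemma follows by evaluating at $\omega_\LT$ once $\alpha(\omega_\LT)=[\varepsilon]-1$ is known. The paper simply imports this last identity from \cite[Prop.\ 2.1]{S}, and your Galois-invariance reduction to a constant is not needed. The gap is in your own derivation of that identity, which is the only non-formal step.

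First, let $(\eta_n)$ be the $[\pi]_\LT$-compatible system from which $\omega_\LT$ is built in \cite[Lem.\ 1.2]{KR}. Then the claim $1+\alpha(\eta_n)=\eta^{\cyc}_n$ for all $n$ is false once $\epsilon\neq 1$. Indeed, $\alpha\circ[\pi]_\LT=[\pi]_{\widehat{\GG}_m}\circ\alpha$, so the claim would force $\eta^{\cyc}_{n-1}=(\eta^{\cyc}_n)^{p\epsilon}=(\eta^{\cyc}_{n-1})^{\epsilon}$ for all $n$. The correct normalisation, after identifying $\varprojlim_{[\pi]}\LT[\pi^n]$ with $\varprojlim_{[p]}\LT[p^n]$, is $1+\alpha([\epsilon^n]_\LT\,\eta_n)=\eta^{\cyc}_n$.

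Second, a series with coefficients in $o_{\Cp}$ cannot be evaluated in $A_{\mathrm{inf}}$, and likewise $\Omega$ cannot be placed in $B_\dR^+$, since $o_{\Cp}$ does not embed there. What rescues this is that $\tau=\chi_\epsilon^{-1}$ is unramified, so $\Omega$ and the coefficients of $\alpha$ lie in $W(\overline{\mathbb F}_p)$. But Frobenius moves these coefficients: $\alpha^{\varphi}=\alpha\circ[\epsilon^{-1}]_\LT$. Hence the $n$-th component of $\bar\alpha(\bar\omega_\LT)$ in $\varprojlim_{x\mapsto x^p}o_{\Cp}/p$ is $\alpha^{\varphi^{-n}}(\eta_n)=\alpha([\epsilon^n]_\LT\,\eta_n)$ modulo $p$, not $\alpha(\eta_n)$. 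Your ``passing to the limit'' reaches the correct reduction $\varepsilon-1$ only because these two errors cancel.

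Third, matching reductions in $\tilde{\mathbf E}^+$ does not yet give equality in $W(\tilde{\mathbf E}^+)$. The element $\omega_\LT$ is characterised in \cite[Lem.\ 1.2]{KR} as the unique lift of $\bar\omega_\LT$ with $\varphi(\omega_\LT)=[\pi]_\LT(\omega_\LT)$, not as a limit over torsion points. The missing idea is to combine this characterisation with the Frobenius twist of $\alpha$:
$\varphi(\alpha(\omega_\LT))=\alpha^{\varphi}([\pi]_\LT(\omega_\LT))=\alpha([p]_\LT(\omega_\LT))=[p]_{\widehat{\GG}_m}(\alpha(\omega_\LT))$.
Since $[\varepsilon]-1$ is the unique lift of $\varepsilon-1$ satisfying $\varphi(x)=(1+x)^p-1$, this gives $\alpha(\omega_\LT)=[\varepsilon]-1$. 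This Frobenius bookkeeping is exactly what \cite[Prop.\ 2.1]{S} supplies and what your sketch omits.

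Your fallback via $\Fil^1/\Fil^2$ does not circumvent this. It needs the first-order version of the same comparison, namely $\Omega\,\omega_\LT\equiv[\varepsilon]-1$ modulo $(\ker\theta)^2$, which you only assert.
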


\begin{proof}
Let $\eta' \in T_\pi'$ be as in \S \ref{sec:notation} and $\alpha_{\eta'}:\LT \to \widehat \GG_m$ the corresponding isomorphism of formal groups.
Then it is easy to see that
%the following commutative diagram
\begin{equation}\label{f:log}
\Omega \log_\LT(Z) = \log_{\widehat \GG_m}(\alpha_{\eta'}(Z)).
%   \xymatrix{
%     \LT  (\mathbb{C}_p)\ar[d]_{\log_{\LT }} \ar[r]^{\alpha_{\eta'}} & \widehat \GG_m(\mathbb{C}_p) \ar[d]^{\log_{\widehat \GG_m}} \\
%     \mathbb{C}_p \ar[r]_{\Omega\cdot} & \mathbb{C}_p,   }
\end{equation}
%in which $t(\alpha_{\eta'})$ is just multiplication by $\Omega_{\eta'} $.
Let $\omega_{\LT }$ and $\omega_{\widehat \GG_m}$ denote the elements in $\tilde{\mathbf{A}}^+:=W(\tilde{\mathbf{E}}^+)$ defined via \cite[Lem.\ 1.2]{KR} for ${\LT }$ and ${\widehat \GG_m}$, respectively. Then $t_{\LT }:=\log_{\LT }(\omega_{\LT })$ and $t_{\QQ_p}:=\log_{\widehat \GG_m}(\omega_{\widehat \GG_m})$ converge in $\tilde{\mathbf{B}}_\mathrm{rig}^+$, the Fr\'{e}chet completion of $ W(\tilde{\mathbf{E}}^+)[\frac{1}{p}],$ a subring of $\mathbf{B}_\mathrm{dR}^+.$ It follows from \eqref{f:log} and \cite[Prop.\ 2.1]{S} - saying   $ \alpha_{\eta'}(\omega_{\LT })=\omega_{\widehat \GG_m}$  -  that $  \Omega t_{\LT }= t_{\QQ_p}$.
%It is easy to check that $t_{\QQ_p}$ equals Fontaine's period $t$ in $p$-adic Hodge theory corresponding to the choice $\eta^\mathrm{cyc}.$
\end{proof}

%\cite[(2.1.5) in Remark 2.1.1]{SaV26}.

Recall that  $e_j^\pi:=t_{\LT}^{-j}\otimes \eta^{\otimes j}\in D_{\cris,\Qp}(\Qp(\chi_{\LT}^j))$ and $e^p_j:=t_{\Qp}^{-j}\otimes \eta_\cyc^{\otimes j}\in D_{\cris,\Qp}(\Qp(\chi_{\cyc}^j)).$

\begin{theorem}\label{thm:comparison}
   The maps $\mathbf{L}^p_{V}$ and $\id\otimes (\mathbb{T}_{\tau^{-1}})^{-1}\circ\mathbf{L}^\pi_{V}$ coincide.
\end{theorem}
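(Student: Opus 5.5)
Both sides of the claimed identity are elements of $D_{\Qp}(G,K)\otimes_{\Qp} D_{\cris,\Qp}(V)$ for each $y\in\HIW^1(X_\infty/\Qp,T)$. Fixing a basis of $D_{\cris,\Qp}(V)$, each coordinate is a distribution on $G$, i.e.\ a rigid analytic function on the character variety $\mathfrak{Z}=\widehat{\Upsilon}\times\widehat{\Gamma}$. By Lemma \ref{lem:uniques}, it suffices to show that the two sides agree at all characters in $S_\mathfrak{X}\times S_\mathfrak{Y}$ for suitable sets as in Example \ref{ex:denseSets}. Both regulators are $\Lambda(G)$-linear and compatible with unramified twists (Proposition \ref{prop:omegatwist}). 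Multiplying by a nonzero element of $\Lambda(G)$, or twisting, would also let me avoid the finitely many exceptional characters where invertibility of the Euler factors fails. I would take characters of the form $\chi_{\cyc}^j\varpi$ with $j\gg0$ and $\varpi$ running through $\widehat{A}\times\{z^k\}$ on $\Upsilon$; equivalently, $\rho=\mathbf 1$ and large $j$.

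The key observation is that $\chi_{\LT}=\chi_\epsilon\chi_{\cyc}$, where $\chi_\epsilon$ is unramified and, by our standing assumption, factors through $\Upsilon$. Hence evaluating a distribution on $G$ at $\chi_{\cyc}^j\varpi$ is the same as evaluating it at $\chi_{\LT}^j\cdot(\chi_\epsilon^{-j}\varpi)$, viewed with respect to the Lubin--Tate decomposition of $G$. So I would evaluate $\mathbf L^\pi_V(y)$ at $\chi_{\LT}^j\varpi'$ with $\varpi'=\chi_\epsilon^{-j}\varpi$, using Theorem \ref{thm:interpolationRegKu} with $\rho$ trivial. The value is
\[ j!\,\Omega^{-j}\,(1-\pi^{-1-j}\varphi^{-1})^{-1}\bigl(1-\tfrac{\pi^{j+1}}{p}\varphi\bigr)\bigl(\widetilde{\exp}^*(y_{\varrho^{-1}})\otimes e^\pi_j\bigr). \]
Here $\varrho^{-1}=\chi_{\LT}^{-j}\varpi'^{-1}=\chi_{\cyc}^{-j}\varpi^{-1}$, so the twisted class $y_{\varrho^{-1}}$ is literally the same class as in the cyclotomic setting. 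Similarly, \cite[Thm.\ 4.15]{LZ}, which Remark \ref{rem:CC} extends to our situation, gives
\[ \mathbf L^p_V(y)(\chi_{\cyc}^j\varpi)=j!\,(1-p^{-1-j}\varphi^{-1})^{-1}(1-p^{j}\varphi)\bigl(\exp^*(y_{\varrho^{-1}})\otimes e^p_j\bigr). \]

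It then remains to match the two expressions after applying $\mathbb T_{\tau^{-1}}^{-1}$. By Lemma \ref{lem period}, $\Omega t_\LT=t_{\Qp}$, so $\Omega^{-j}e^\pi_j$ corresponds to $e^p_j$ tensored with the basis $\eta^{\otimes j}\otimes\eta_{\cyc}^{\otimes -j}$ of the unramified twist $\chi_\epsilon^j$. The identification \eqref{f:Dcrisunramifiedtwist} absorbs exactly this factor. The factor $t_{\Qp}/t_{\LT}$ in $\mathbb T_{\tau^{-1}}$ accounts for the difference between $\widetilde{\exp}^*$ and $\exp^*$.

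For the Euler factors, note that $\varphi$ on $D_{\cris}(V(\tau^{-1})\otimes\chi_\epsilon^{\pm j})$ differs from $\varphi$ on $D_{\cris}(V)$ by the scalar $\epsilon^{\pm(j+1)}$, coming from $\tau=\chi_\epsilon^{-1}$ together with the $\chi_\epsilon^{-j}$ twist. With $\pi=\epsilon p$ and $q=p$, this turns $\pi^{-1-j}\varphi^{-1}$ into $p^{-1-j}\varphi^{-1}$ and $\pi^{j+1}\varphi/p$ into $p^{j}\varphi$.

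I expect the main obstacle to be this bookkeeping: the precise power of $\epsilon$, the direction of the twists and the normalisation of \eqref{f:Dcrisunramifiedtwist} must cancel exactly, with no stray constant such as $\epsilon^j$. I would first check the case $V=\Qp(1)$ to fix the signs.

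The hypotheses needed for Theorem \ref{thm:interpolationRegKu} (analyticity, the $\mathrm{Fil}^{-1}$ condition, and vanishing of the $\varphi$-eigenspaces) hold over $\Qp$ after twisting by a sufficiently negative power of $\chi_{\cyc}$. Both maps are compatible with such twists via $\Lambda(G)$-equivariance and the twisting operators $\mathrm{Tw}$, so this reduction is harmless.
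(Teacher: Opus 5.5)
This is the paper's own proof. Evaluate both sides at $\varpi\chi_{\cyc}^j=(\varpi\chi_\epsilon^{-j})\chi_{\LT}^j$ with $\rho$ trivial via Theorem \ref{thm:interpolationRegKu}, match the two values using $\widetilde{\exp}^*=\mathbb{T}_{\tau^{-1}}\circ\exp^*$, the Euler-factor shift from $\pi=\epsilon p$, and the identification of $\Omega^{-j}e^\pi_j$ with $e^p_j$ (Lemma \ref{lem period} and \eqref{f:Dcrisunramifiedtwist}), and conclude by Lemma \ref{lem:uniques} and Example \ref{ex:denseSets}. Your bookkeeping, with $\varphi$ on the Lubin--Tate side equal to $\epsilon^{-1-j}$ times $\varphi$ on the cyclotomic side, is exactly what the paper's line-by-line computation amounts to.

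The one departure is your last paragraph, and it is not justified as written. The paper does not reduce the hypotheses of Theorem \ref{thm:interpolationRegKu} at all; it simply invokes the theorem. Your reduction leans on $\Lambda(G)$-linearity and $\mathrm{Tw}$ (Proposition \ref{prop:omegatwist}), but these only cover unramified twists. Compatibility of both regulators with a ramified twist by a power of $\chi_{\cyc}$ is a separate statement, involving extra factors in $D(G)$, which the paper does not supply for Kupferer's map. You should either drop that paragraph and work under the hypotheses of Theorem \ref{thm:interpolationRegKu}, or prove such a twisting formula for both maps.
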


\begin{proof}
Fix $y\in \HIW^1(X_\infty/F,T).$ Then, by Theorem \ref{thm:interpolationRegKu} evaluation of $\mathbf{L}^p_{V}(y)$ at $\rho=\varpi\chi_{\cyc}^j$ for $j\geq 0$ gives
\begin{align}\label{f:evalcyc}
   &  j!
    (1-p^{-1}\varphi_L^{-1})^{-1}(1- \varphi_L)\Big(\exp^*_{L,V(\varpi^{-1}\chi_{\cyc}^{-j}),\id}(y_{{\rho}^{-1}})\Big)\otimes e_{ j}^p
\end{align}
while evaluation of $\mathbf{L}_V^\pi(y)$ at the same character equals
\begin{align}\label{f:evalLT}
\notag  \mathbf{L}_V^\pi(y)(\varpi\chi_{\cyc}^j) & =  \mathbf{L}_V^\pi(y)(\varpi\chi_\epsilon^{-j}\chi_{\LT}^j) \\
   & =\Omega^{-j}  j!
    (1-\pi^{-1}\varphi_L^{-1})^{-1}(1-\frac{\pi^{}}{p}\varphi_L)\Big(\widetilde{\exp}^*_{L,V(\varpi^{-1}\chi_\epsilon^j\chi_{\LT}^{-j}),\id}(y_{{\rho}^{-1}})\Big)\otimes e_{ j}^\pi,\\
  \notag   & =\Omega^{-j}  j!
    (1-(p\epsilon)^{-1}\varphi_L^{-1})^{-1}(1-\frac{p\epsilon}{p}\varphi_L)\Big(\mathbb{T}_{\tau^{-1}}\circ{\exp}^*_{L,V(\varpi^{-1}\chi_{\cyc}^{-j}),\id}(y_{{\rho}^{-1}})\Big)\otimes e_{ j}^\pi,\\
  \notag  & = \mathbb{T}_{\tau^{-1}}\Big( j!
    (1-p^{-1}\varphi_L^{-1})^{-1}(1-\varphi_L)\Big({\exp}^*_{L,V(\varpi^{-1}\chi_{\cyc}^{-j}),\id}(y_{{\rho}^{-1}})\Big)\otimes \Omega^{-j}e_{ j}^\pi\Big).
\end{align}
Upon noting that under the identifications as in \eqref{f:Dcrisunramifiedtwist} $ \Omega^{-j}e_{ j}^\pi$ identifies with $e_{ j}^p$ we see that \eqref{f:evalcyc} and \eqref{f:evalLT} coincide after applying $\id\otimes (\mathbb{T}_{\tau^{-1}})^{-1}$ to the latter. Therefore the claim follows from Lemma \ref{lem:uniques} in combination with Example \ref{ex:denseSets}.
\end{proof}

\begin{acknowledgement}
We are very grateful to David Loeffler for suggesting the use  of big exponential maps for unramified towers in order to improve Colmez' reciprocity law in the Lubin-Tate setting from \cite{SanoV}.
Also we thank Rustam Steingart for discussions about Zariski density.
The project was funded by the Deutsche Forschungsgemeinschaft (DFG, German Research Foundation) under   TRR 326, {\it Geometry and Arithmetic of Uniformized Structures}, project-ID 444845124, and JSPS KAKENHI Grant Number 22K13896.
\end{acknowledgement}

%\noindent
%Otmar Venjakob\\
%Universit\"{a}t Heidelberg,  Mathematisches Institut,\\  Im Neuenheimer Feld 288,  69120
%Heidelberg,  Germany,\\
% http://www.mathi.uni-heidelberg.de/$\,\tilde{}\,$venjakob/\\
%venjakob@mathi.uni-heidelberg.de

%\printendnotes[custom]

\end{document}